\newcommand*{\htarrow}{\lhook\joinrel\relbar\joinrel\twoheadrightarrow}
\newcommand*{\tarrow}{\relbar\joinrel\mid\joinrel\twoheadrightarrow}
\newcommand*{\harrow}{\lhook\joinrel\relbar\joinrel\mid\joinrel\rightarrow}
\newcommand*{\rarrow}{\relbar\joinrel\mid\joinrel\rightarrow}
\begin{document}
\newtheorem{prop-defi}[smfthm]{Proposition-DÈfinition}
\newtheorem{notas}[smfthm]{Notations}
\newtheorem{nota}[smfthm]{Notation}
\newtheorem{defis}[smfthm]{DÈfinitions}
\newtheorem{hypo}[smfthm]{HypothËse}

\def\Xm{{\mathbb X}}
\def\Um{{\mathbb U}}
\def\Am{{\mathbb A}}
\def\Fm{{\mathbb F}}
\def\Mm{{\mathbb M}}
\def\Nm{{\mathbb N}}
\def\Pm{{\mathbb P}}
\def\Qm{{\mathbb Q}}
\def\Zm{{\mathbb Z}}
\def\Dm{{\mathbb D}}
\def\Cm{{\mathbb C}}
\def\Rm{{\mathbb R}}
\def\Gm{{\mathbb G}}
\def\Lm{{\mathbb L}}
\def\Km{{\mathbb K}}
\def\Om{{\mathbb O}}
\def\Em{{\mathbb E}}

\def\BC{{\mathcal B}}
\def\QC{{\mathcal Q}}
\def\TC{{\mathcal T}}
\def\ZC{{\mathcal Z}}
\def\AC{{\mathcal A}}
\def\CC{{\mathcal C}}
\def\DC{{\mathcal D}}
\def\EC{{\mathcal E}}
\def\FC{{\mathcal F}}
\def\GC{{\mathcal G}}
\def\HC{{\mathcal H}}
\def\IC{{\mathcal I}}
\def\JC{{\mathcal J}}
\def\KC{{\mathcal K}}
\def\LC{{\mathcal L}}
\def\MC{{\mathcal M}}
\def\NC{{\mathcal N}}
\def\OC{{\mathcal O}}
\def\PC{{\mathcal P}}
\def\UC{{\mathcal U}}
\def\VC{{\mathcal V}}
\def\XC{{\mathcal X}}
\def\SC{{\mathcal S}}

\def\BF{{\mathfrak B}}
\def\AF{{\mathfrak A}}
\def\GF{{\mathfrak G}}
\def\EF{{\mathfrak E}}
\def\CF{{\mathfrak C}}
\def\DF{{\mathfrak D}}
\def\JF{{\mathfrak J}}
\def\LF{{\mathfrak L}}
\def\MF{{\mathfrak M}}
\def\NF{{\mathfrak N}}
\def\XF{{\mathfrak X}}
\def\UF{{\mathfrak U}}
\def\KF{{\mathfrak K}}
\def\FF{{\mathfrak F}}

\def \longmapright#1{\smash{\mathop{\longrightarrow}\limits^{#1}}}
\def \mapright#1{\smash{\mathop{\rightarrow}\limits^{#1}}}
\def \lexp#1#2{\kern \scriptspace \vphantom{#2}^{#1}\kern-\scriptspace#2}
\def \linf#1#2{\kern \scriptspace \vphantom{#2}_{#1}\kern-\scriptspace#2}
\def \linexp#1#2#3 {\kern \scriptspace{#3}_{#1}^{#2} \kern-\scriptspace #3}

\def \Ext{\mathop{\mathrm{Ext}}\nolimits}
\def \ad{\mathop{\mathrm{ad}}\nolimits}
\def \sh{\mathop{\mathrm{Sh}}\nolimits}
\def \irr{\mathop{\mathrm{Irr}}\nolimits}
\def \FH{\mathop{\mathrm{FH}}\nolimits}
\def \FPH{\mathop{\mathrm{FPH}}\nolimits}
\def \coh{\mathop{\mathrm{Coh}}\nolimits}
\def \res{\mathop{\mathrm{res}}\nolimits}
\def \op{\mathop{\mathrm{op}}\nolimits}
\def \rec {\mathop{\mathrm{rec}}\nolimits}
\def \art{\mathop{\mathrm{Art}}\nolimits}
\def \hyp {\mathop{\mathrm{Hyp}}\nolimits}
\def \cusp {\mathop{\mathrm{Cusp}}\nolimits}
\def \scusp {\mathop{\mathrm{Scusp}}\nolimits}
\def \Iw {\mathop{\mathrm{Iw}}\nolimits}
\def \JL {\mathop{\mathrm{JL}}\nolimits}
\def \speh {\mathop{\mathrm{Speh}}\nolimits}
\def \isom {\mathop{\mathrm{Isom}}\nolimits}
\def \Vect {\mathop{\mathrm{Vect}}\nolimits}
\def \groth {\mathop{\mathrm{Groth}}\nolimits}
\def \hom {\mathop{\mathrm{Hom}}\nolimits}
\def \deg {\mathop{\mathrm{deg}}\nolimits}
\def \val {\mathop{\mathrm{val}}\nolimits}
\def \det {\mathop{\mathrm{det}}\nolimits}
\def \rep {\mathop{\mathrm{Rep}}\nolimits}
\def \spec {\mathop{\mathrm{Spec}}\nolimits}
\def \fr {\mathop{\mathrm{Fr}}\nolimits}
\def \frob {\mathop{\mathrm{Frob}}\nolimits}
\def \ker {\mathop{\mathrm{Ker}}\nolimits}
\def \im {\mathop{\mathrm{Im}}\nolimits}
\def \Red {\mathop{\mathrm{Red}}\nolimits}
\def \red {\mathop{\mathrm{red}}\nolimits}
\def \aut {\mathop{\mathrm{Aut}}\nolimits}
\def \diag {\mathop{\mathrm{diag}}\nolimits}
\def \spf {\mathop{\mathrm{Spf}}\nolimits}
\def \Def {\mathop{\mathrm{Def}}\nolimits}
\def \twist {\mathop{\mathrm{Twist}}\nolimits}
\def \supp {\mathop{\mathrm{Supp}}\nolimits}
\def \Id {{\mathop{\mathrm{Id}}\nolimits}}
\def \lie {{\mathop{\mathrm{Lie}}\nolimits}}
\def \Ind{\mathop{\mathrm{Ind}}\nolimits}
\def \ind {\mathop{\mathrm{ind}}\nolimits}
\def \soc {\mathop{\mathrm{Soc}}\nolimits}
\def \top {\mathop{\mathrm{Top}}\nolimits}
\def \ker {\mathop{\mathrm{Ker}}\nolimits}
\def \coker {\mathop{\mathrm{Coker}}\nolimits}
\def \coim {\mathop{\mathrm{Coim}}\nolimits}
\def \gal {{\mathop{\mathrm{Gal}}\nolimits}}
\def \Nr {{\mathop{\mathrm{Nr}}\nolimits}}
\def \rn {{\mathop{\mathrm{rn}}\nolimits}}
\def \tr {{\mathop{\mathrm{Tr~}}\nolimits}}
\def \Sp {{\mathop{\mathrm{Sp}}\nolimits}}
\def \st {{\mathop{\mathrm{St}}\nolimits}}
\def \sp{{\mathop{\mathrm{Sp}}\nolimits}}
\def \perv{\mathop{\mathrm{Perv}}\nolimits}
\def \tor {{\mathop{\mathrm{Tor}}\nolimits}}
\def \nrd {{\mathop{\mathrm{Nrd}}\nolimits}}
\def \nilp {{\mathop{\mathrm{Nilp}}\nolimits}}
\def \obj {{\mathop{\mathrm{Obj}}\nolimits}}
\def \cl {{\mathop{\mathrm{cl}}\nolimits}}
\def \can {{\mathop{\mathrm{can}}\nolimits}}
\def \gr {{\mathop{\mathrm{gr}}\nolimits}}
\def \grr {{\mathop{\mathrm{grr}}\nolimits}}
\def \cogr {{\mathop{\mathrm{gr}}\nolimits}}
\def \cogrr {{\mathop{\mathrm{grr}}\nolimits}}

\def \rem{{\noindent\textit{Remarque:~}}}
\def \ext {{\mathop{\mathrm{Ext}}\nolimits}}
\def \End {{\mathop{\mathrm{End}}\nolimits}}

\def\semi{\mathrel{>\!\!\!\triangleleft}}
\let \DS=\displaystyle

\def \hi{\HC}

\setcounter{secnumdepth}{3} \setcounter{tocdepth}{3}

\def \Fil{\mathop{\mathrm{Fil}}\nolimits}
\def \CoFil{\mathop{\mathrm{CoFil}}\nolimits}
\def \Fill{\mathop{\mathrm{Fill}}\nolimits}
\def \CoFill{\mathop{\mathrm{CoFill}}\nolimits}
\def\SF{{\mathfrak S}}
\def\PF{{\mathfrak P}}
\def \EFil{\mathop{\mathrm{EFil}}\nolimits}
\def \EFill{\mathop{\mathrm{EFill}}\nolimits}
\def \FP{\mathop{\mathrm{FP}}\nolimits}
\def \FPL{\mathop{\mathrm{FPL}}\nolimits}

\let \longto=\longrightarrow
\let \oo=\infty

\let \d=\delta
\let \k=\kappa

\newcommand{\marque}{\addtocounter{smfthm}{1}
{\smallskip \noindent \textit{\thesmfthm}~---~}}

\renewcommand\atop[2]{\ensuremath{\genfrac..{0pt}{1}{#1}{#2}}}

\title[Filtrations de stratification de quelques variÈtÈs de Shimura simples]
{Filtrations de stratification de quelques variÈtÈs de Shimura simples}

\alttitle{Filtration of stratification of some simple Shimura varieties}

\author{Boyer Pascal}
\email{boyer@math.univ-paris13.fr}
\address{ArShiFo ANR-10-BLAN-0114}

\frontmatter

\begin{abstract}
Nous dÈfinissons et Ètudions de nouvelles filtrations dites de stratification d'un faisceau
pervers sur un schÈma; contrairement au cas de la filtration par les poids, ou de monodromie,
ces filtrations sont valables quel que soit l'anneau $\Lambda$ de coefficients. 
Pour $\Lambda=\overline \Qm_l$, nous illustrons
ces constructions dans le contexte des variÈtÈs de Shimura unitaires simples de \cite{h-t} pour
les faisceaux pervers d'Harris-Taylor et le complexe des cycles Èvanescents introduits et 
ÈtudiÈs dans \cite{boyer-invent2}. Nous montrons aussi
comment utiliser ces filtrations afin de simplifier l'Ètape principale de \cite{boyer-invent2}.
Les cas de $\Lambda=\overline \Zm_l$ et $\overline \Fm_l$ seront ÈtudiÈs dans un prochain article.
\end{abstract}

\begin{altabstract}
We define and study new filtrations called of stratification of a perverse sheaf on a scheme; 
beside the cases of the weight or monodromy filtrations, these filtrations are available whatever 
are the ring of coefficients. For $\Lambda=\overline \Qm_l$, we illustrate these constructions in 
the geometric situation of the 
simple unitary Shimura varieties of \cite{h-t} for the perverse sheaves of Harris-Taylor and the
complex of vanishing cycles introduced and studied in \cite{boyer-invent2}. 
We also show how to use these filtrations to simplify
the principal step of \cite{boyer-invent2}. The cases of
$\Lambda=\overline \Zm_l$ and $\overline \Fm_l$ will be studied in another paper. 
\end{altabstract}

\subjclass{14G22, 14G35, 11G09, 11G35,\\ 11R39, 14L05, 11G45, 11Fxx}

\keywords{VariÈtÈs de Shimura, modules formels, correspondances de Langlands, correspondances de
Jacquet-Langlands, faisceaux pervers, cycles Èvanescents, filtration de monodromie, conjecture de
monodromie-poids}

\altkeywords{Shimura varieties, formal modules, Langlands correspondences, Jacquet-Langlands
correspondences,
monodromy filtration, weight-monodromy conjecture, perverse sheaves, vanishing cycles}

\maketitle

\pagestyle{headings} \pagenumbering{arabic}

\section*{Introduction}
\renewcommand{\theequation}{\arabic{equation}}
\backmatter

Pour $l \neq p$ deux nombres premiers distincts,
dans \cite{boyer-invent2} nous avons explicitÈ  le faisceau pervers
des cycles Èvanescents ‡ coefficients dans $\overline \Qm_l$,
d'une certaine classe de variÈtÈs de Shimura unitaires
$X$ qualifiÈe de \og simple \fg{} dans \cite{h-t}, en une place de caractÈristique
rÈsiduelle $p$. Rappelons que cette description
ne repose pas sur une Ètude gÈomÈtrique de ces variÈtÈs qui consisterait ‡ se ramener
‡ une situation semi-stable; pour l'essentiel les arguments se ramËnent 
‡ de la combinatoire sur les reprÈsentations admissibles des groupes linÈaires sur un
corps local et automorphes d'un groupe symplectique sur un corps de nombres.
Au c\oe ur de ces arguments, on trouve la formule des traces de Selberg qui calcule
la somme alternÈe des groupes de cohomologie de certains systËmes locaux dits
d'Harris-Taylor, introduits dans \cite{h-t}, et dÈfinis sur certaines strates de
la fibre spÈciale de ces variÈtÈs de Shimura. L'apport principal de \cite{boyer-invent2}
est l'Ètude des prolongements de ces systËmes locaux ‡ toute la fibre spÈciale. Pour
l'essentiel la preuve consiste
\begin{itemize}
\item ‡ dÈcrire, dans un certain groupe de Grothendieck des faisceaux pervers de Hecke,
les images des extensions par zÈro des systËmes locaux d'Harris-Taylor puis d'en dÈduire 
celle du faisceau pervers $\Psi_\IC$ des cycles Èvanescents.

\item Ensuite on Ètudie la suite spectrale associÈe ‡ la filtration par les poids de 
$\Psi_\IC$, calculant ses faisceaux de cohomologie. 
\end{itemize}
Un fait remarquable est que, pour des raisons combinatoires, 
cette suite spectrale dÈgÈnËre nÈcessairement en $E_2$ et l'Ètape la plus complexe
consiste ‡ montrer que, lorsque ce n'est pas trivialement faux, les $d_1^{p,q}$ sont non
nulles. Pour ce faire, on utilise
\begin{itemize}
\item soit, cf. \cite{boyer-invent2}, une propriÈtÈ d'autodualitÈ ‡ la Zelevinsky sur la 
cohomologie des espaces
de Lubin-Tate qui passe par un thÈorËme difficile de comparaison avec l'espace
de Drinfeld d˚ ‡ Faltings et dÈveloppÈ par Fargues dans \cite{fargues-faltings};

\item soit, cf. \cite{boyer-compositio}, des arguments combinatoires complexes
reposant sur le thÈorËme de Lefschetz difficile, ‡ propos la cohomologie de la variÈtÈ de 
Shimura 
\end{itemize}
Une autre faÁon de voir ces calculs
est de considÈrer la filtration par les noyaux itÈrÈs de la monodromie auquel cas la suite
spectrale associÈe, calculant les faisceaux de cohomologie $\hi^i \Psi_{\IC}$, dÈgÈnËre en
$E_1$. Cette observation nous suggËre une stratÈgie pour montrer que ces
$\hi^i \Psi_\IC$ sont sans torsion: il suffirait 
\begin{itemize}
\item de construire une filtration entiËre de $\Psi_\IC$
qui coÔnciderait sur $\overline \Qm_l$ avec celle par les noyaux itÈrÈs de la monodromie puis

\item de montrer que les faisceaux de cohomologie de ces graduÈs sont sans torsion.
\end{itemize}
Le point de dÈpart de ce travail est donc de construire un telle filtration entiËre, l'Ètude
de la torsion des $\hi^i \Psi_\IC$ Ètant repoussÈe ‡ un prochain article. 

L'idÈe la plus naturelle pour qu'une telle filtration existe quels que soient les coefficients, 
est de lui trouver
une nature gÈomÈtrique et donc d'utiliser des stratifications de l'espace. Ainsi pour un 
faisceau pervers $P$ sur un schÈma muni d'une stratification quelconque, on le filtre
au moyen des morphismes d'adjonction
$j_! j^* P \rightarrow P$ et $P \rightarrow j_* j^* P$, cf. le \S \ref{para-filtration}.
Pour $\Lambda=\Zm_l$, on travaille dans la catÈgorie quasi-abÈlienne des faisceaux pervers
\og libres \fg, cf. le \S \ref{para-quasi} au sens o˘ Ètant donnÈ un faisceau pervers \og libre \fg,
on lui associe une filtration dont les graduÈs sont \og libres \fg. Pour ce faire on
est amenÈ ‡ prendre les coimages, et non les images, des morphismes d'adjonction.
Nous verrons dans un prochain papier que ce phÈnomËne qu'il est naturel de nommer 
\og saturation \fg, est directement liÈ ‡ la torsion. 
On Ètudie, \S \ref{para-def-filtration}, plus particuliËrement deux de ces constructions,
la premiËre dite filtration exhaustive de stratification et la deuxiËme qui lui est duale, la 
cofiltration exhaustive de stratification. 

Au \S \ref{para-shimura0}, on explicite ces constructions pour les extensions par zÈro
des $\overline \Qm_l$-systËmes locaux d'Harris-Taylor, \S \ref{para-systeme-locaux}, puis 
\S \ref{para-psi0} pour le $\overline \Qm_l$-faisceau pervers des cycles Èvanescents 
$\Psi_{\IC,\overline \Qm_l}$. En particulier on vÈrifie que 
la filtration de stratification de $\Psi_{\IC,\overline \Qm_l}$ est Ègale ‡ celle par les noyaux 
de la monodromie, alors que la cofiltration coÔncide ‡ celle par les images.

Rappelons que l'un des intÈrÍts de cet article est de fournir l'ingrÈdient thÈorique
pour contrÙler la torsion des faisceaux de cohomologie de $\Psi_\IC$. Nous montrerons
dans un prochain travail que cette torsion est toujours nulle ce qui, 
par le thÈorËme de Berkovich couplÈ ‡ l'analogue du thÈorËme de Serre-Tate, 
montre que la cohomologie des espaces de Lubin-Tate, 
cf. \cite{boyer-invent2}, est sans torsion et non divisible. 
Afin d'illustrer les techniques qui seront dÈveloppÈes dans ce prochain papier,
nous montrons en appendice, comment d'une part obtenir la filtration de
stratification de $\Psi_{\IC}$ ‡ partir seulement des arguments de somme alternÈe de
\cite{boyer-invent2}, puis comment en dÈduire les $\hi^i \Psi_\IC$.

La lisibilitÈ du texte doit beaucoup ‡ la relecture prÈcise et aux suggestions de J.-F. Dat
qui, en particulier, m'a incitÈ ‡ utiliser le langage des catÈgories quasi-abÈliennes;
je l'en remercie vivement. Merci enfin ‡ D. Juteau pour m'avoir expliquÈ son travail
sur les faisceaux pervers ‡ coefficients entiers.

\tableofcontents

\mainmatter

\renewcommand{\theequation}{\arabic{section}.\arabic{subsection}.\arabic{smfthm}}

\section{Faisceaux pervers entiers et thÈories de torsion}
\label{para-fpentiers}

Dans tout le texte, $\Km$ dÈsignera une extension finie de $\Qm_l$ ou $\overline \Qm_l$,
d'anneau des entiers $\Om$ et de corps rÈsiduel $\Fm=\Om/(\varpi)$. La lettre $\Lambda$
dÈsignera une des trois lettres $\Km,\Om$ ou $\Fm$.

\subsection{ThÈories de torsion}
\label{para-torsion}

Une thÈorie de torsion, cf.\cite{juteau} 1.3.1, sur une catÈgorie abÈlienne
$\AC$ est un couple $(\TC,\FC)$ de sous-catÈgories pleines tel que:
\begin{itemize}
\item pour tout objet $T$ dans $\TC$ et $L$ dans $\FC$, on a
$\hom_\AC(T,L)=0;$

\item pour tout objet $A$ de $\AC$, il existe des objets $A_\TC$ et $A_\FC$ de respectivement $\TC$ et $\FC$,
ainsi qu'une suite exacte courte
$0 \rightarrow A_\TC \longrightarrow A \longrightarrow A_\FC \rightarrow 0.$
\end{itemize}

\rem $\TC$ (resp. $\FC$) est stable par quotients et extensions (resp. sous-objets et extensions).
On dira de $(\TC,\FC)$ est une thÈorie de torsion \textit{hÈrÈditaire} (resp.
\textit{co-hÈrÈditaire}) si $\TC$
(resp. $\FC$) est stable par sous-objets (resp. par quotients).

Dans une catÈgorie abÈlienne $\AC$ qui est $\Om$-linÈaire, un objet $A$ de $\AC$ est dit 
de $\varpi$-torsion (resp. $\varpi$-libre, resp. $\varpi$-divisible) si $\varpi^N 1_A$ est nul pour un certain entier $N$ (resp. $\varpi.1_A$
est un monomorphisme, resp. un Èpimorphisme).

\begin{prop} (cf. \cite{juteau} 1.3.5) \phantomsection \label{prop-noetherienne}
Soit $\AC$ une catÈgorie abÈlienne $\Om$-linÈaire; on note $\TC$ (resp. $\FC$, resp. $\QC$)
la sous-catÈgorie pleine des objets de $\varpi$-torsion (resp. $\varpi$-libres, 
resp. $\varpi$-divisibles) de $\AC$. Si $\AC$
est noethÈrienne (resp. artinienne) alors $(\TC,\FC)$ (resp. $(\QC,\TC)$) est une thÈorie 
de torsion hÈrÈditaire (resp. co-hÈrÈditaire) sur $\AC$.
\end{prop}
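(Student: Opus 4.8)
The plan is to verify directly the two axioms of a torsion theory for the pair $(\TC,\FC)$ (the noetherian case), the artinian case being formally dual. First I would check the Hom-vanishing: given $T$ of $\varpi$-torsion and $L$ of $\varpi$-libre, and a morphism $f\colon T\to L$, one has $\varpi^N 1_T = 0$ for some $N$, hence $\varpi^N f = f\circ(\varpi^N 1_T)=0$; but $\varpi^N 1_L = (\varpi 1_L)^N$ is a monomorphism as a composite of monomorphisms, so $\varpi^N f = (\varpi^N 1_L)\circ f = 0$ forces $f=0$. This step is immediate and uses only $\Om$-linearity.

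The substance is the second axiom: for every object $A$ one must produce a short exact sequence $0\to A_\TC\to A\to A_\FC\to 0$ with $A_\TC$ of $\varpi$-torsion and $A_\FC$ of $\varpi$-libre. The natural candidate for $A_\TC$ is the "union" of the kernels $\ker(\varpi^n 1_A)$: the ascending chain $\ker(\varpi 1_A)\subseteq\ker(\varpi^2 1_A)\subseteq\cdots$ stabilizes by noetherianity, say at step $N$, and I would set $A_\TC := \ker(\varpi^N 1_A)$ with $A_\FC := A/A_\TC$. Clearly $\varpi^N 1_{A_\TC}=0$, so $A_\TC\in\TC$. For $A_\FC\in\FC$ one must check that $\varpi 1_{A_\FC}$ is a monomorphism: if $\bar a\in A_\FC$ satisfies $\varpi\bar a = 0$, lift to $a\in A$ with $\varpi a\in A_\TC=\ker(\varpi^N 1_A)$, so $\varpi^{N+1}a=0$, whence $a\in\ker(\varpi^{N+1}1_A)=\ker(\varpi^N 1_A)=A_\TC$ by stabilization, i.e. $\bar a=0$. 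This is the only place noetherianity is genuinely used, and it is the crux of the argument; everything else is bookkeeping.

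Finally I would record the heredity: $\TC$ is stable under subobjects because a subobject of an object killed by $\varpi^N$ is killed by $\varpi^N$ (this is what makes the torsion theory \emph{héréditaire}), while the general remark preceding the statement already gives stability of $\TC$ under quotients and extensions and of $\FC$ under subobjects and extensions. For the artinian case and the pair $(\QC,\TC)$, I would dualize: the descending chain $\im(\varpi 1_A)\supseteq\im(\varpi^2 1_A)\supseteq\cdots$ stabilizes, one takes $A_\QC:=\bigcap_n\im(\varpi^n 1_A)=\im(\varpi^N 1_A)$ as the $\varpi$-divisible subobject and $A/A_\QC$ as the $\varpi$-torsion quotient, checks $\varpi 1_{A_\QC}$ is epi by the same stabilization trick, and notes that a quotient of a $\varpi$-divisible object is $\varpi$-divisible, giving the \emph{co-héréditaire} property; the Hom-vanishing $\hom_\AC(Q,T)=0$ for $Q$ divisible and $T$ torsion follows since $\varpi^N 1_Q$ epi and $\varpi^N 1_T=0$ force any $g\colon Q\to T$ to satisfy $g=g\circ(\varpi^N 1_Q)/\!\!/\varpi^N$-argument $=\varpi^N(\ldots)=0$ after writing $g$ through the surjection $\varpi^N 1_Q$. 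I expect no real obstacle beyond making sure the chain-stabilization is invoked at exactly the right spot; since this is essentially Juteau's Proposition 1.3.5, I would simply cite \cite{juteau} and include the short verification above for completeness.
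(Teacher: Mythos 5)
The paper offers no proof of this proposition: it is stated with the attribution \og cf. \cite{juteau} 1.3.5\fg{} and no argument is given, so there is nothing in the text to compare your proof against. What you wrote is the standard expected argument, and the core of it — $A_\TC:=\ker(\varpi^N 1_A)$ for $N$ where the ascending chain stabilizes, the verification that $\varpi 1_{A/A_\TC}$ is monic via $\ker(\varpi^{N+1}1_A)=\ker(\varpi^N 1_A)$, the Hom-vanishing from $\varpi^N 1_L$ being a monomorphism, and the dual decomposition $A_\QC:=\im(\varpi^N 1_A)$ in the artinian case — is correct. The element-wise phrasing is harmless (Freyd--Mitchell, or direct kernel/image chasing, turns it into a categorical proof).

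One small slip in your last paragraph: you identify the co-hereditary property of $(\QC,\TC)$ with \og a quotient of a $\varpi$-divisible object is $\varpi$-divisible\fg. By the paper's definition, a torsion theory $(\TC_{\mathrm{abs}},\FC_{\mathrm{abs}})$ is co-hereditary when the \emph{second} component $\FC_{\mathrm{abs}}$ is stable under quotients; here the pair is $(\QC,\TC)$, so the relevant statement is that $\TC$ — the $\varpi$-torsion objects, which play the role of the torsion-free class of this theory — is closed under quotients. That fact (a quotient of an object killed by $\varpi^N$ is killed by $\varpi^N$) is just as immediate as the one you wrote, and it is the correct dual of the hereditary check in the noetherian case, but it is not the same as $\QC$ being stable under quotients (which is automatic for the torsion class of any torsion theory and is not what \og co-héréditaire\fg{} records). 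Fix that identification and the proof stands.
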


\rem dans une catÈgorie abÈlienne $\Om$-linÈaire noethÈrienne (resp. artinienne) tout
$A$ admet un plus grand sous-objet de $\varpi$-torsion $A_{tor}$ 
(resp. $\varpi$-divisible $A_{div}$):
$A/A_{tor}$ (resp. $A/A_{div}$) est sans $\varpi$-torsion (resp. de $\varpi$-torsion) et 
$\Km A \simeq \Km (A/A_{tor})$ (resp. $\Km A \simeq \Km A_{div}$).

\medskip

\begin{notas}\phantomsection \label{nota-pT}
Soit $\DC$ une catÈgorie triangulÈe munie d'une $t$-structure 
$(\DC^{\leq 0},\DC^{\geq 0})$ au sens de \cite{ast} dÈfinition 1.3.1; on notera
$\CC$ son c{\oe}ur, $\tau_{\leq n}$ et $\tau_{\geq n}$, les foncteurs de troncation ainsi
que $\hi^n:= \tau_{\leq n} \tau_{\geq n}=\tau_{\geq n} \tau_{\leq n}$.
Pour $T:\DC_1 \longrightarrow \DC_2$ un foncteur triangulÈ, on note $\lexp p T$ pour
$\hi^0 \circ T \circ \epsilon_1$, o˘ $\epsilon_1:\CC_1 \rightarrow \DC_1$ est l'inclusion 
du c{\oe}ur.
\end{notas}

\rem on rappelle qu'un foncteur triangulÈ $T:\DC_1 \longrightarrow \DC_2$ est dit $t$-exact ‡ droite (resp. ‡ gauche),
si $T(\DC_1^{\leq 0}) \subset \DC_2^{\leq 0}$ (resp. $T(\DC_1^{\geq 0}) \subset 
\DC_2^{\geq 0}$) et $t$-exact s'il l'est ‡ droite et ‡ gauche.
Si $(T^*,T_*)$ est une paire de foncteurs triangulÈs adjoints, alors 
$T^*$ est $t$-exact ‡ droite si et seulement
si $T_*$ est $t$-exact ‡ gauche et alors $(\lexp p T^*,\lexp p T_*)$ est une paire de 
foncteurs adjoints de $\CC_1$ et $\CC_2$.


\begin{coro} \phantomsection \label{coro-pp}(cf. \cite{juteau} 1.3.6)
Si $\CC$ est munie d'une thÈorie de torsion $(\TC,\FC)$
$$\begin{array}{l}
\lexp {+} \DC^{\leq 0}:= \{ A \in \DC^{\leq 1}:~\hi^1(A) \in \TC \} \\
\lexp {+} \DC^{\geq 0}:= \{ A \in \DC^{\geq 0}:~\hi^0(A) \in \FC \} \\
\end{array}$$
dÈfinissent une nouvelle $t$-structure sur $\DC$ dont on note $\lexp + \CC$ le c{\oe}ur
lequel est en outre munie d'une thÈorie de torsion $(\FC,\TC[-1])$.
\end{coro}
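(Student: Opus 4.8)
The plan is to run the classical Happel--Reiten--Smal\o{} tilting argument: I would first verify the three axioms of \cite{ast} 1.3.1 for the pair $(\lexp{+}\DC^{\leq 0},\lexp{+}\DC^{\geq 0})$, and then read off the heart $\lexp{+}\CC$ together with its torsion theory essentially from the construction.

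\emph{The shift and orthogonality axioms.} Stability under $[1]$ is immediate from $\hi^n(A[1])=\hi^{n+1}(A)$ and the inclusions $\DC^{\leq 0}\subset\DC^{\leq 1}$, $\DC^{\geq 1}\subset\DC^{\geq 0}$: if $A\in\lexp{+}\DC^{\leq 0}$ then $A[1]\in\DC^{\leq 0}$ and $\hi^1(A[1])=\hi^2(A)=0\in\TC$, and dually for $\lexp{+}\DC^{\geq 0}$. For the vanishing of $\hom_\DC(A,B)$ with $A\in\lexp{+}\DC^{\leq 0}$ and $B\in\lexp{+}\DC^{\geq 1}=\lexp{+}\DC^{\geq 0}[-1]$, I would apply $\hom_\DC(-,B)$ to the canonical triangle $\tau_{\leq 0}A\to A\to\hi^1(A)[-1]$ of the \emph{given} $t$-structure: the term $\hom(\tau_{\leq 0}A,B)$ vanishes by orthogonality of that $t$-structure since $\tau_{\leq 0}A\in\DC^{\leq 0}$ and $B\in\DC^{\geq 1}$, so $\hom(A,B)$ is a quotient of $\hom(\hi^1(A)[-1],B)$; applying $\hom(\hi^1(A)[-1],-)$ to the triangle $\hi^1(B)[-1]\to B\to\tau_{\geq 2}B$ and using $\hom_\DC(\DC^{\leq 1},\DC^{\geq 2})=0$ reduces this in turn to $\hom_\DC(\hi^1(A),\hi^1(B))=\hom_\CC(\hi^1(A),\hi^1(B))$, which is zero because $\hi^1(A)\in\TC$ and $\hi^1(B)\in\FC$.

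\emph{The decomposition triangle.} This is the one step with genuine content. Given $X\in\DC$, I would take the torsion sequence $0\to\hi^1(X)_\TC\to\hi^1(X)\to\hi^1(X)_\FC\to 0$ in $\CC$ and let $A$ be the fibre of the composite $\tau_{\leq 1}X\to\hi^1(X)[-1]\to\hi^1(X)_\FC[-1]$, the first arrow being the truncation map of $\tau_{\leq 1}X$. A chase through the cohomology long exact sequence then gives $\hi^i(A)\simeq\hi^i(X)$ for $i\leq 0$, $\hi^1(A)\simeq\hi^1(X)_\TC\in\TC$ (the kernel of a surjection) and $\hi^i(A)=0$ for $i\geq 2$ (the would-be $\hi^2$ is the cokernel of that same surjection); hence $A\in\lexp{+}\DC^{\leq 0}$. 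Setting $B:=\mathrm{cone}(A\to X)$ and feeding the composite $A\to\tau_{\leq 1}X\to X$ into the octahedral axiom produces a distinguished triangle $\hi^1(X)_\FC[-1]\to B\to\tau_{\geq 2}X\to$, whose cohomology sequence yields $\hi^i(B)=0$ for $i\leq 0$ and $\hi^1(B)\simeq\hi^1(X)_\FC\in\FC$, so $B\in\lexp{+}\DC^{\geq 1}$. The triangle $A\to X\to B\to A[1]$ is then the one required by axiom (iii), and $(\lexp{+}\DC^{\leq 0},\lexp{+}\DC^{\geq 0})$ is a $t$-structure; call $\lexp{+}\CC$ its heart.

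\emph{The torsion theory on the heart.} By construction $\lexp{+}\CC$ consists of the objects $A$ with $\hi^i(A)=0$ for $i\notin\{0,1\}$, $\hi^0(A)\in\FC$ and $\hi^1(A)\in\TC$; in particular $\FC$ and $\TC[-1]$ are full subcategories of $\lexp{+}\CC$. For $F\in\FC$ and $T\in\TC$ one has $\hom_{\lexp{+}\CC}(F,T[-1])=\hom_\DC(F,T[-1])=\Ext^{-1}_\CC(F,T)=0$, and for any $A\in\lexp{+}\CC$ the truncation triangle $\hi^0(A)\to A\to\hi^1(A)[-1]$ has all three vertices in $\lexp{+}\CC$, hence is a short exact sequence $0\to\hi^0(A)\to A\to\hi^1(A)[-1]\to 0$ of $\lexp{+}\CC$ with terms in $\FC$ and in $\TC[-1]$ respectively. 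This is precisely a torsion theory $(\FC,\TC[-1])$. The only real obstacle along the way is the bookkeeping in the decomposition step: checking that the connecting morphisms one meets vanish for degree reasons, and that $A$ and $B$ genuinely land in the asserted subcategories.
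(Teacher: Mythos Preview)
Your argument is the standard Happel--Reiten--Smal\o{} tilting construction, and it is correct: the shift and orthogonality checks are clean, the decomposition via the fibre of $\tau_{\leq 1}X\to\hi^1(X)_\FC[-1]$ together with the octahedral axiom is the usual one, and the identification of the torsion pair $(\FC,\TC[-1])$ on the new heart follows immediately from the truncation triangle $\hi^0(A)\to A\to\hi^1(A)[-1]$.

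There is nothing to compare against in the paper itself: the corollary is stated with a reference to \cite{juteau} 1.3.6 and no proof is given here. Your write-up is exactly the argument one finds in the literature (Happel--Reiten--Smal\o, and in the perverse-sheaf context in Juteau), so it is entirely in line with what the paper is invoking.

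One cosmetic point: writing $\hom_\DC(F,T[-1])=\Ext^{-1}_\CC(F,T)$ is a slight abuse, since negative $\Ext$ groups are not intrinsically defined in $\CC$; it would be cleaner simply to say that $F\in\DC^{\leq 0}$ and $T[-1]\in\DC^{\geq 1}$, hence the $\hom$ vanishes by the orthogonality axiom of the original $t$-structure. This does not affect the validity of the argument.
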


\rem on retrouve la $t$-structure $p$ ‡ partir de $p+$ via les formules:
$$\begin{array}{l}
\DC^{\leq 0}:= \{ A \in \lexp {+} \DC^{\leq 0}:~\lexp {+} \hi^0(A) \in \FC \} \\
\DC^{\geq 0}:= \{ A \in \lexp {+} \DC^{\geq -1}:~\lexp {+} \hi^{-1}(A) \in \TC \} \\
\end{array}$$
de sorte que $\lexp {++} \CC=\CC[-1]$.
En particulier pour $A \in \CC$ (resp. $A \in \lexp + \CC$), on a
$$A_\TC= \lexp + \hi^{-1} A \hbox{ et } A_\FC=\lexp + \hi^0 A. \qquad (\hbox{resp. }
A_\FC=\hi^0 A \hbox{ et } A_{\TC[-1]}=\hi^1 A).$$

\subsection{Recollement}
\label{para-recollement}

On suppose donnÈes trois catÈgories triangulÈes $\DC$, $\DC_U$ et $\DC_F$ ainsi que des foncteurs triangulÈs $i_*:\DC_F \rightarrow \DC$ et $j^*:\DC \rightarrow \DC_U$,
vÈrifiant les conditions de recollement de \cite{ast} \S 1.4.3
\begin{enumerate}
\item $i_*$ possËde un adjoint ‡ gauche notÈ $i^*$ et un adjoint ‡ droite $i^!$;

\item $j^*$ possËde un adjoint ‡ gauche $j_!$ et un adjoint ‡ droite $j_*$;

\item on a $j^*i_*=0$ et donc par adjonction $i^*j_!=0$ et $i^!j_*=0$; pour $A \in \DC_F$ et $B \in \DC_U$
$$\hom(j_!B,i_*A)=0 \hbox{ et } \hom(i_*,j_*B)=0;$$

\item pour tout $K \in \DC$, il existe $d:i_*i^*K \rightarrow j_!j^*K[1]$ (resp. $d:j_*j^*K \rightarrow i_*i^!K[1]$),
nÈcessairement unique, tel que le triangle suivant est distinguÈ
$$j_!j^*K \rightarrow K \rightarrow i_*i^* K \leadsto^{d} \quad
(\hbox{resp. } i_*i^! K \rightarrow K \rightarrow j_*j^*K \leadsto^d);$$

\item les foncteurs $i_*$, $j_!$ et $j_*$ sont pleinement fidËles: les morphismes d'adjonction
$i^*i_* \rightarrow \Id \rightarrow i^!i_*$ et $j^*j_* \rightarrow \Id \rightarrow j^*j_!$
sont des isomorphismes.
\end{enumerate}


…tant donnÈes des $t$-structures
$(\DC_U^{\leq 0},\DC_U^{\geq 0})$ sur $\DC_U$ et
$(\DC_F^{\leq 0},\DC_F^{\geq 0})$ sur $\DC_F$, on dÈfinit une $t$-structure sur $\DC$ par recollement:
$$\begin{array}{l}
\DC^{\leq 0}:=\{ K \in \DC:~j^* K \in \DC_U^{\leq 0} \hbox{ et } i^* K \in \DC_F^{\leq 0} \} \\
\DC^{\geq 0}:=\{ K \in \DC:~j^* K \in \DC_U^{\geq 0} \hbox{ et } i^! K \in \DC_F^{\geq 0} \}.
\end{array}$$

\begin{prop} \phantomsection (\cite{juteau} proposition 2.30)
Supposons $\CC_F$ et $\CC_U$ munies de thÈorie de torsion $(\TC_F,\FC_F)$ et 
$(\TC_U,\FC_U)$. On dÈfinit alors une thÈorie de torsion sur $\CC$ par:
$$\begin{array}{l}
\TC:=\{ P \in \CC:~\lexp p i^* P \in \TC_F \hbox{ et } j^* P \in \TC_U \} \\
\FC:=\{ P \in \CC:~\lexp p i^! P \in \FC_F \hbox{ et } j^* P \in \FC_U \}
\end{array}$$
\end{prop}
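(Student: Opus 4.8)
\medskip\noindent\textit{Sketch of proof.}\quad The plan is to realise $(\TC,\FC)$ as the torsion theory attached, via Corollaire \ref{coro-pp}, to a $t$-structure on $\DC$ obtained by recollement of the two $t$-structures that the same corollary produces on $\DC_U$ and $\DC_F$.

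Recall first that for the glued $t$-structure on $\DC$ the functor $j^*$ is $t$-exact while $i^*$ (resp.\ $i^!$) is $t$-exact on the right (resp.\ on the left), so that $\lexp p i^*,\lexp p i^!:\CC\to\CC_F$ and $j^*:\CC\to\CC_U$ are well defined and the formulas in the statement make sense. Apply Corollaire \ref{coro-pp} to $(\TC_U,\FC_U)$ on $\CC_U$ and to $(\TC_F,\FC_F)$ on $\CC_F$: one gets $t$-structures on $\DC_U$ and on $\DC_F$, with
$$\lexp{+}{\DC_U}^{\leq 0}=\{A\in\DC_U^{\leq 1}:\hi^1(A)\in\TC_U\},\qquad \lexp{+}{\DC_U}^{\geq 0}=\{A\in\DC_U^{\geq 0}:\hi^0(A)\in\FC_U\}$$
(and likewise for $\DC_F$), which satisfy $\DC_U^{\leq 0}\subset\lexp{+}{\DC_U}^{\leq 0}\subset\DC_U^{\leq 1}$ and $\DC_U^{\geq 1}\subset\lexp{+}{\DC_U}^{\geq 0}\subset\DC_U^{\geq 0}$, and similarly for $\DC_F$. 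Glue these two $t$-structures along $(i_*,j^*)$: one obtains a $t$-structure on $\DC$, with heart $\lexp+\CC$, given by $\lexp{+}{\DC}^{\leq 0}=\{K:i^*K\in\lexp{+}{\DC_F}^{\leq 0},\ j^*K\in\lexp{+}{\DC_U}^{\leq 0}\}$ and $\lexp{+}{\DC}^{\geq 0}=\{K:i^!K\in\lexp{+}{\DC_F}^{\geq 0},\ j^*K\in\lexp{+}{\DC_U}^{\geq 0}\}$. Feeding the four inclusions above into these gluing formulas yields $\DC^{\leq 0}\subset\lexp{+}{\DC}^{\leq 0}\subset\DC^{\leq 1}$ and $\DC^{\geq 1}\subset\lexp{+}{\DC}^{\geq 0}\subset\DC^{\geq 0}$.

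Now I run the construction of Corollaire \ref{coro-pp} backwards. For $P\in\CC$ the displayed inclusions give $P\in\lexp{+}{\DC}^{\leq 0}\cap\lexp{+}{\DC}^{\geq -1}$, hence $\lexp+\hi^n(P)=0$ for $n\notin\{-1,0\}$. Applying the $\hi^*$ of the glued $t$-structure to the $\lexp+$-truncation triangle $\lexp+\tau_{\leq-1}P\to P\to\lexp+\tau_{\geq 0}P\leadsto$, and using that $\lexp+\tau_{\geq 0}P=\lexp+\hi^0(P)$ lies in $\lexp+\CC\subset\DC^{\geq 0}\cap\DC^{\leq 1}$ whereas $\lexp+\tau_{\leq-1}P\in\lexp{+}{\DC}^{\leq -1}\subset\DC^{\leq 0}$, the long exact cohomology sequence forces both outer terms of the triangle to lie in $\CC$; so the triangle is a short exact sequence $0\to P_\TC\to P\to P_\FC\to 0$ in $\CC$ with $P_\TC:=\lexp+\tau_{\leq-1}P$ and $P_\FC:=\lexp+\tau_{\geq 0}P$. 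Setting $\TC:=\{Q\in\CC:Q[-1]\in\lexp+\CC\}$ and $\FC:=\CC\cap\lexp+\CC$, one has $P_\TC\in\TC$ (since $P_\TC[-1]=\lexp+\hi^{-1}(P)\in\lexp+\CC$) and $P_\FC\in\FC$; moreover $\hom_\CC(T,L)=\hom_\DC(T[-1],L[-1])=0$ for $T\in\TC$, $L\in\FC$, because $T[-1]\in\lexp+\CC\subset\lexp{+}{\DC}^{\leq 0}$ and $L[-1]\in\lexp+\CC[-1]\subset\lexp{+}{\DC}^{\geq 1}$. Thus $(\TC,\FC)$ is a torsion theory on $\CC$.

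It remains to check that this $(\TC,\FC)$ is the one of the statement, which is bookkeeping with the gluing formulas. For $P\in\CC$ the $\leq 0$ half of membership in $\lexp+\CC$ is automatic ($\CC\subset\lexp{+}{\DC}^{\leq 0}$), so $P\in\FC$ iff $i^!P\in\lexp{+}{\DC_F}^{\geq 0}$ and $j^*P\in\lexp{+}{\DC_U}^{\geq 0}$; as $i^!P\in\DC_F^{\geq 0}$ and $j^*P\in\CC_U$, these read $\lexp p i^!P=\hi^0(i^!P)\in\FC_F$ and $j^*P\in\FC_U$, which is the definition of $\FC$ in the statement. Likewise $P\in\TC$ iff $P[-1]\in\lexp+\CC$, i.e.\ $i^*(P[-1])\in\lexp{+}{\DC_F}^{\leq 0}$ and $j^*(P[-1])\in\lexp{+}{\DC_U}^{\leq 0}$ (the $\geq$ half being automatic), i.e.\ $\hi^1((i^*P)[-1])=\lexp p i^*P\in\TC_F$ and $\hi^1((j^*P)[-1])=j^*P\in\TC_U$, the definition of $\TC$. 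The only genuinely nonformal step is the amplitude chase in the previous paragraph showing that the $\lexp+$-truncations of an object of $\CC$ land back in $\CC$; everything else is the formal recollement of $t$-structures together with the $t$-exactness properties of $j^*$, $i^*$ and $i^!$ recalled at the start.
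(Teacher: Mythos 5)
The paper gives no proof here: it simply cites \cite{juteau} proposition 2.30. Your argument — glue the $p+$ $t$-structures of Corollaire \ref{coro-pp} on $\DC_F$ and $\DC_U$, use the amplitude inclusions $\DC^{\leq 0}\subset\lexp{+}{\DC}^{\leq 0}\subset\DC^{\leq 1}$ and $\DC^{\geq 1}\subset\lexp{+}{\DC}^{\geq 0}\subset\DC^{\geq 0}$ to show that for $P\in\CC$ the $\lexp+$-truncation triangle lies entirely in $\CC$ and provides the torsion short exact sequence, then unwind the gluing formulas to match $\TC$ and $\FC$ with the given descriptions — is correct and, in substance, exactly what the remark immediately following the proposition in the paper records (namely that $(\lexp{+}\DC^{\leq 0},\lexp{+}\DC^{\geq 0})$ is obtained by recollement of the $p+$ $t$-structures on the open and closed parts), so you have in effect supplied the proof that the paper delegates to the reference. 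Two small checks you leave implicit but that do go through: in the amplitude chase one needs both boundary terms $\hi^{-1}(\lexp+\tau_{\leq -1}P)$ and $\hi^{1}(\lexp+\tau_{\geq 0}P)$ to vanish, which follows from the long exact sequence combined with $\lexp+\tau_{\geq 0}P\in\DC^{\geq 0}$ and $\lexp+\tau_{\leq -1}P\in\DC^{\leq 0}$; and the identification $\hi^0(i^!P)=\lexp p i^!P$ for $P\in\CC$ uses that $i^!P\in\DC_F^{\geq 0}$, which is part of the definition of the glued $\DC^{\geq 0}$.
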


\begin{coro} \phantomsection \label{coro-torsion0} (\cite{juteau} lemme 2.32)
On a les propriÈtÈs suivantes:
$$\begin{array}{ccc}
\lexp p i_*(\TC_F) \subset \TC & \lexp p j_!(\TC_U) \subset \TC & \lexp p j_{!*}(\TC_U) \subset \TC \\
\lexp p i_*(\FC_F) \subset \FC & \lexp p j_*(\FC_U) \subset \FC & \lexp p j_{!*}(\FC_U) \subset \FC
\end{array}$$
\end{coro}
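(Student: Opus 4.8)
The statement to prove is Corollaire~\ref{coro-torsion0}: that the gluing functors $\lexp p i_*$, $\lexp p j_!$, $\lexp p j_{!*}$ send the torsion subcategory $\TC_F$ (resp. $\TC_U$) into $\TC$, and dually $\lexp p i_*$, $\lexp p j_*$, $\lexp p j_{!*}$ send $\FC_F$ (resp. $\FC_U$) into $\FC$.

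The plan is to read everything off the explicit description of the glued torsion theory $(\TC,\FC)$ given in the preceding Proposition, namely $\TC=\{P:\ \lexp p i^*P\in\TC_F\ \text{and}\ j^*P\in\TC_U\}$ and $\FC=\{P:\ \lexp p i^!P\in\FC_F\ \text{and}\ j^*P\in\FC_U\}$. So for each of the six inclusions I only need to compute the two "coordinates" $\lexp p i^*(-)$ (or $\lexp p i^!(-)$) and $j^*(-)$ of the object in question and check they land in the right place.

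First, the $j_!$ and $j_*$ cases. For $T\in\TC_U$: by the recollement axioms $i^*j_!=0$, hence $\lexp p i^*(\lexp p j_! T)=0\in\TC_F$ trivially; and $j^*j_!\simeq\Id$ (full faithfulness of $j_!$, axiom (5)), so $j^*(\lexp p j_! T)\simeq T\in\TC_U$. Hence $\lexp p j_! T\in\TC$. Dually, for $L\in\FC_U$: $i^!j_*=0$ gives $\lexp p i^!(\lexp p j_* L)=0\in\FC_F$, and $j^*j_*\simeq\Id$ gives $j^*(\lexp p j_* L)\simeq L\in\FC_U$, so $\lexp p j_* L\in\FC$. (One should be slightly careful that $\lexp p j_!$ and $\lexp p j_*$ on the hearts really are computed by $\hi^0\circ j_!$, $\hi^0\circ j_*$, as in Notation~\ref{nota-pT}, and that $i^*j_!=0$ survives the truncation — it does, since the vanishing is at the level of triangulated functors.)

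Next, the $i_*$ cases. For $T\in\TC_F$: here $j^*i_*=0$, so $j^*(\lexp p i_* T)=0\in\TC_U$; and $i^*i_*\simeq\Id$, so $\lexp p i^*(\lexp p i_* T)\simeq T\in\TC_F$. Thus $\lexp p i_* T\in\TC$, and dually $\lexp p i_*(\FC_F)\subset\FC$ using $i^!i_*\simeq\Id$. Finally, the middle-extension cases $\lexp p j_{!*}$. Recall $j^*\circ\lexp p j_{!*}\simeq\Id$ on $\CC_U$, so the $j^*$-coordinate of $\lexp p j_{!*}(T)$ for $T\in\TC_U$ is $T\in\TC_U$ again. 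For the $i^*$-coordinate one uses that $\lexp p j_{!*}T$ is a quotient of $\lexp p j_! T$ (it is the image of $\lexp p j_! T\to\lexp p j_* T$), hence $\lexp p i^*(\lexp p j_{!*}T)$ is a quotient of $\lexp p i^*(\lexp p j_! T)=0$, so it is $0\in\TC_F$; therefore $\lexp p j_{!*}T\in\TC$. Dually $\lexp p j_{!*}L$ for $L\in\FC_U$ is a sub of $\lexp p j_* L$, so $\lexp p i^!(\lexp p j_{!*}L)$ is a sub of $\lexp p i^!(\lexp p j_* L)=0$, whence it lies in $\FC_U$ — wait, in $\FC_F$ — and the $j^*$-coordinate is again $L$, so $\lexp p j_{!*}L\in\FC$.

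The only point requiring a little care — and the one I expect to be the main obstacle if there is one — is the assertions that $\lexp p i^*$ preserves epimorphisms (being $t$-exact on the right, it is right exact on the hearts, hence sends the epi $\lexp p j_! T\twoheadrightarrow\lexp p j_{!*}T$ to an epi) and dually that $\lexp p i^!$ preserves monomorphisms; together with checking that $\TC_F$ (resp. $\FC_F$) is stable under quotients (resp. subobjects), which is exactly the general remark recorded after the definition of a torsion theory at the start of \S\ref{para-torsion}. Granting these, every inclusion follows by the two-line coordinate computations above, so the corollary is essentially a formal consequence of the recollement axioms (1)--(5) and the explicit formula for $(\TC,\FC)$ in the previous proposition.
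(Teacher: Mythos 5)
Your proof is correct and follows essentially the same route as the paper's, which simply cites the vanishing of $\lexp p j^*\circ\lexp p i_*$, $\lexp p i^*\circ\lexp p j_!$, $\lexp p i^!\circ\lexp p j_*$ (BBD 1.4.17(i)) and of $\lexp p i^*\circ\lexp p j_{!*}$, $\lexp p i^!\circ\lexp p j_{!*}$, then reads the conclusion off the explicit description of $(\TC,\FC)$; you derive the $j_{!*}$ vanishings inline from right-exactness of $\lexp p i^*$ rather than citing them, which is a fine micro-variant. One small remark: since every $\lexp p i^*$- or $\lexp p i^!$-coordinate you compute is actually zero (not merely an object of $\TC_F$ or $\FC_F$), your closing appeal to stability of $\TC_F$ under quotients and $\FC_F$ under subobjects is superfluous.
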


\begin{proof}
Le rÈsultat dÈcoule de la nullitÈ de 
$\lexp p i^* \lexp p j_{!*}$, $\lexp p i^! \lexp p j_{!*}$ ainsi que celle, cf. \cite{ast} proposition 1.4.17 (i), des composÈs $\lexp p j_* \circ \lexp p i_*$, $\lexp p i^* \circ \lexp p j_!$ et 
$\lexp p i^! \circ \lexp p j_*$.
\end{proof}


\rem la $t$-structure $(\lexp + \DC^{\leq 0}, \lexp + \DC^{\geq 0})$ du corollaire
\ref{coro-pp} s'obtient par recollement, i.e.
$$\lexp + \DC^{\leq 0}:=\{ K \in \DC^{\leq 1}:~\hi^1(K) \in \TC \} =\{ K \in \DC:~j^* K \in
\lexp + \DC_U^{\leq 0} \hbox{ et } i^* K \in \lexp + \DC_F^{\leq 0} \}$$
$$\lexp + \DC^{\geq 0}:=\{ K \in \DC^{\geq 0}:~\hi^0(K) \in \FC \} = \{ K \in \DC:~j^* K \in
\lexp + \DC_U^{\geq 0} \hbox{ et } i^! K \in \lexp + \DC_F^{\geq 0} \}.$$
En effet pour $K \in \DC^{\leq 1}$ avec $\hi^1 (K) \in \TC$, du triangle distinguÈ
$\tau_{\leq 0} K \longrightarrow K \longrightarrow \hi^1 (K) \leadsto$, par application
de $j^*$ (resp. $i^*$), on en dÈduit, comme $j^*(\DC^{\leq 0}) \subset \DC_U^{\leq 0}$
(resp. $i^*(\DC^{\leq 0}) \subset \DC_F^{\leq 0}$), que $\hi^1(j^*K)=j^* \hi^1(K)$
(resp. $\hi^1(i^*K)=i^* \hi^1(K)$) est de torsion et donc $j^* K \in \lexp + \DC_U^{\leq 0}$
(resp. $i^* K \in \lexp + \DC_F^{\leq 0}$). RÈciproquement si $j^*K \in \lexp + \DC_U^{\leq 0}$
et $i^* K \in \lexp + \DC_F^{\leq 0}$ alors $K \in \DC^{\leq 1}$ avec $j^* \hi^1(K) \in \TC_U$
et $i^* \hi^1(K) \in \TC_F$, soit $\hi^1(K) \in \TC$. Le cas de $\lexp + \DC^{\geq 0}$ se traite
de mÍme.

\subsection{La catÈgorie quasi-abÈlienne des faisceaux pervers libres}
\label{para-quasi}

Notons tout d'abord qu'il dÈcoule du corollaire \ref{coro-pp} et de la remarque qui le suit que
$$\FC:=\DC^{\leq 0} \cap \lexp {+} \DC^{\geq 0}=\CC \cap \lexp {+} \CC$$ 
est la sous-catÈgorie
pleine des objets libres (resp. divisibles) de $\CC$ (resp. de $\lexp {+} \CC$).
Les catÈgories $\CC$ et $\lexp {+} \CC$ Ètant abÈliennes, elles possËdent des noyaux,
images, conoyaux et coimages; cependant ces notions peuvent Ítre distinctes selon
qu'on considËre $\CC$ ou $\lexp {+} \CC$, i.e. elles ne fournissent pas nÈcessairement
des objets de $\FC$. 

\begin{lemm}
La catÈgorie $\FC$ admet des noyaux et des conoyaux. Plus prÈcisÈment pour 
$L,L' \in \FC$ et $f:L \longrightarrow L'$, le noyau $\ker_\FC f$ (resp. le conoyau 
$\coker_\FC f$) de $f$ dans $\FC$ est Ègal ‡ celui $\ker_\CC f$ 
(resp. $\coker_{\lexp + \CC} f$) de $f$ dans $\CC$ (resp. $\lexp + \CC$).
\end{lemm}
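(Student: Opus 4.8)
The plan is to work directly with the characterization $\FC = \CC \cap \lexp{+}\CC$ obtained just before the statement, together with the explicit descriptions of the inclusions coming from Corollaire~\ref{coro-pp} and the remark following it: for $A \in \CC$ one has $A_\TC = \lexp{+}\hi^{-1}A$ and $A_\FC = \lexp{+}\hi^0 A$, while for $A \in \lexp{+}\CC$ one has $A_{\TC[-1]} = \hi^1 A$ and $A_\FC = \hi^0 A$. First I would treat the kernel. Given $f \colon L \longrightarrow L'$ with $L,L' \in \FC$, form $K := \ker_\CC f$ inside the abelian category $\CC$. Since $\TC$ is stable under subobjects is \emph{not} automatic, but $\FC$ is stable under subobjects in $\CC$ by the general theory of torsion theories (the remark after the definition: $\FC$ is stable by sous-objets), so $K \in \FC$; hence $K \in \lexp{+}\CC$ as well. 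Then I would check that the monomorphism $K \hookrightarrow L$ in $\CC$ remains a monomorphism in $\lexp{+}\CC$ and that it still has the universal property of a kernel of $f$ in $\FC$: any $g \colon M \to L$ in $\FC$ with $fg = 0$ factors uniquely through $K$ because the factorization already exists and is unique in the ambient abelian category $\CC$, and $\FC$ is a full subcategory. This shows $\ker_\FC f = \ker_\CC f$.

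Next I would treat the cokernel, which is the dual situation but with the subtlety that forces the passage to $\lexp{+}\CC$. Set $C := \coker_{\lexp{+}\CC} f$, the cokernel computed in the abelian category $\lexp{+}\CC$. By Corollaire~\ref{coro-pp}, $\lexp{+}\CC$ carries the torsion theory $(\FC,\TC[-1])$, and $\FC$ is stable under quotients in $\lexp{+}\CC$ (it is the torsion part of a torsion theory, hence closed under quotients and extensions). Since $C$ is a quotient of $L' \in \FC$ inside $\lexp{+}\CC$, we get $C \in \FC$, so in particular $C \in \CC$. I would then verify the universal property: the epimorphism $L' \twoheadrightarrow C$ in $\lexp{+}\CC$ kills $f$, and for any $h \colon L' \to N$ in $\FC$ with $hf = 0$, the unique factorization through $C$ exists in $\lexp{+}\CC$ and, $\FC$ being full, lies in $\FC$. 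The only thing to check carefully is that $L' \to C$ is genuinely an epimorphism \emph{in $\FC$} — but an epimorphism in $\lexp{+}\CC$ restricts to one in the full subcategory $\FC$, so this is immediate. Thus $\coker_\FC f = \coker_{\lexp{+}\CC} f$.

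The main obstacle — and the reason the statement is phrased asymmetrically — is precisely the asymmetry between $\CC$ and $\lexp{+}\CC$ with respect to which torsion subcategory is closed under subobjects versus quotients. In $\CC$ with torsion theory $(\TC,\FC)$, the free part $\FC$ is closed under subobjects but \emph{not} under quotients; dually, in $\lexp{+}\CC$ with torsion theory $(\FC,\TC[-1])$, the subcategory $\FC$ (now the \emph{torsion} part) is closed under quotients but not subobjects. So a naive cokernel of $f$ computed in $\CC$ need not be free, which is why one must compute it in $\lexp{+}\CC$ instead; symmetrically a kernel computed in $\lexp{+}\CC$ need not be free. The remaining care is bookkeeping: confirming that the inclusions $\FC \hookrightarrow \CC$ and $\FC \hookrightarrow \lexp{+}\CC$ are exact enough on the relevant morphisms (they preserve the kernel, resp. cokernel, of $f$ since the source and target objects lie in $\FC$), which follows from the description of $(A_\TC, A_\FC)$ via $\lexp{+}\hi^\bullet$ recalled above and the fact that these truncation functors vanish on objects that are already free. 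Assembling these two halves yields that $\FC$ admits kernels and cokernels with the asserted formulas.
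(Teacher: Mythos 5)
Your argument is correct and follows the same route as the paper's proof: identify the kernel with $\ker_\CC f$ and the cokernel with $\coker_{\lexp{+}\CC} f$, observe that these lie in $\FC$ because $\FC$ is the torsion-free class in $(\TC,\FC)$ on $\CC$ (hence closed under subobjects) and the torsion class in $(\FC,\TC[-1])$ on $\lexp{+}\CC$ (hence closed under quotients), and then check the universal property using the fullness of $\FC$ in each ambient abelian category. The paper is terser — it asserts membership in $\FC$ without spelling out the closure argument — so the only difference is that you make explicit the reason, which is precisely the asymmetry of the two torsion theories you highlight at the end.
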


\begin{proof}
Le noyau $\ker_\CC f$ (resp. $\coker_{\lexp + \CC} f$) est un objet de $\FC$ et 
$\ker_\CC \longrightarrow L \longrightarrow L'$ (resp. $L \longrightarrow L' 
\longrightarrow \coker_{\lexp + \CC} f$) est l'application nulle. Si $X \longrightarrow L$
(resp. $L' \longrightarrow X$) est un morphisme de $\FC$ dont le composÈ avec 
$L \longrightarrow L'$ est nul alors, $X$ Ètant un objet de $\CC$ (resp. de $\lexp + \CC$), 
la propriÈtÈ universelle de 
$\ker_\CC f$ (resp. de $\coker_{\lexp + \CC} f$), nous fournit une flËche
$\ker_\CC \longrightarrow L$ (resp. par $L' \longrightarrow \coker_{\lexp + \CC} f$) qui
factorise $X \longrightarrow L$ (resp. $L' \longrightarrow X$).
Ainsi $\ker_\CC f$ 
(resp. $\coker_{\lexp + \CC} f$) est un noyau (resp. conoyau) de $f$ dans $\FC$.
\end{proof}

\rem d'aprËs la remarque suivant \ref{coro-pp},
$\lexp + \hi^0 \coker_\CC f$ est le quotient libre de $\coker_\CC f$. Si $L'
\longrightarrow X$ est un morphisme de $\FC$ dont le composÈ avec $L \longrightarrow L'$
est nul, alors $X$ Ètant un objet de $\CC$, on a une flËche $\coker_\CC f \longrightarrow X$
telle que le composÈ $(\coker_\CC f)_{tor} \hookrightarrow \coker_\CC f \longrightarrow X$
est nul, i.e. $L' \longrightarrow X$ se factorise par $\lexp + \hi^0 \coker_\CC f$. Ce dernier est
donc Ègal ‡ $\coker_\FC f=\coker_{\lexp + \CC} f$. De la mÍme faÁon on a
$\ker_\FC f=\hi^0 \ker_{\lexp + \CC} f$.

\begin{defi}
Pour $L,L' \in \FC$ et $f:L \longrightarrow L'$, on rappelle que l'image $\im_\FC f$
(resp. la coimage $\coim_\FC f$) de $f$ dans $\FC$ est $\ker_\FC (L' \longrightarrow 
\coker_\FC f)$ (resp. $\coker_\FC ( \ker_\FC f \longrightarrow L)$).
\end{defi}

\begin{lemm}
Pour $f$ un morphisme de $\FC$, on a $\coim_\FC f=\coim_\CC f=\im_\CC f$ et
$\im_\FC f=\im_{\lexp + \CC} f=\coim_{\lexp + \CC} f$.
En outre $f$  induit une flËche canonique
$\coim_\FC f \longrightarrow \im_\FC f.$
\end{lemm}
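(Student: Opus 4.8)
The plan is to push everything into the two ambient abelian categories $\CC$ and $\lexp + \CC$, where coimage and image automatically coincide, and then to transfer the outcome back into $\FC$ using the descriptions of $\ker_\FC$ and $\coker_\FC$ obtained in the two preceding lemmas and their remarks. Throughout, the point to keep straight is in which of $\CC$, $\lexp + \CC$ each universal property is being invoked.

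First I would compute $\coim_\FC f$. By the previous lemma $\ker_\FC f=\ker_\CC f$, and this object lies in $\FC$, so the arrow $\ker_\FC f \longrightarrow L$ is a morphism of $\FC$ whose cokernel \emph{in $\CC$} is, by definition of the coimage, $\coim_\CC f=\im_\CC f$. The remark following that lemma identifies its cokernel \emph{in $\FC$} with the free quotient $\lexp + \hi^0$ of that $\CC$-cokernel, so it remains only to see that $\im_\CC f$ is already free. This holds because $\im_\CC f=\ker_\CC(L' \longrightarrow \coker_\CC f)$ is a subobject of $L' \in \FC$, and $\FC$, being the torsion-free class of the torsion theory of \ref{prop-noetherienne}, is stable under subobjects. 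Hence $\coim_\FC f=\coim_\CC f=\im_\CC f$.

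Next I would run the dual argument for $\im_\FC f$. By the previous lemma $\coker_\FC f=\coker_{\lexp + \CC} f$, so $L' \longrightarrow \coker_\FC f$ is a morphism of $\FC$ whose kernel \emph{in $\lexp + \CC$} is, by definition of the image, $\im_{\lexp + \CC} f=\coim_{\lexp + \CC} f$; by the corresponding remark its kernel \emph{in $\FC$} is the largest divisible subobject $\hi^0 \im_{\lexp + \CC} f$ of that object. Now $\im_{\lexp + \CC} f=\coker_{\lexp + \CC}(\ker_{\lexp + \CC} f \longrightarrow L)$ is a quotient of $L \in \FC$, and $\FC$ viewed inside $\lexp + \CC$ is the class of divisible objects, which, being the torsion part of the torsion theory $(\FC,\TC[-1])$ of Corollary \ref{coro-pp}, is stable under quotients; so $\im_{\lexp + \CC} f$ is already divisible and equals its own largest divisible subobject. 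Hence $\im_\FC f=\im_{\lexp + \CC} f=\coim_{\lexp + \CC} f$.

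Finally, the canonical arrow $\coim_\FC f \longrightarrow \im_\FC f$ is produced exactly as in any category equipped with kernels and cokernels: $f$ annihilates $\ker_\FC f$, hence factors through the epimorphism $L \twoheadrightarrow \coim_\FC f$ via some $\bar f\colon \coim_\FC f \longrightarrow L'$; precomposing $\bar f$ with that epimorphism shows the composite $\coim_\FC f \longrightarrow L' \longrightarrow \coker_\FC f$ is zero, so the universal property of $\im_\FC f=\ker_\FC(L' \longrightarrow \coker_\FC f)$ yields the required map. I do not anticipate a genuine obstacle here; the only delicate points are the two stability assertions --- $\FC$ stable under subobjects in $\CC$, and under quotients in $\lexp + \CC$ --- which is precisely the source of the asymmetry between $\im_\FC$ and $\coim_\FC$ in the quasi-abelian category $\FC$.
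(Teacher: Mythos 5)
Your proof is correct and follows essentially the same route as the paper: both hinge on the observations that $\im_\CC f$ is free because it is a subobject of $L'\in\FC$ and $\FC$ is closed under subobjects in $\CC$, while $\coim_{\lexp + \CC} f$ is divisible because it is a quotient of $L$ and $\FC$ is closed under quotients in $\lexp{+}\CC$; combined with $\coker_\FC = \lexp{+}\hi^0\coker_\CC$ and $\ker_\FC = \hi^0\ker_{\lexp{+}\CC}$ from the preceding remark, this gives exactly the stated identities. Your derivation of the canonical arrow by factoring through the coimage and then invoking the universal property of $\im_\FC f=\ker_\FC(L'\to\coker_\FC f)$ is a minor rearrangement of the paper's direct factorisation through $\im_\FC f$, and is equally valid.
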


\begin{proof}
On procËde comme dans la preuve du lemme prÈcÈdent en notant que dans $\CC$
on a $\im_\CC f \hookrightarrow L'$ et donc $\im_\CC f$ est un objet de $\FC$.
De mÍme dans $\lexp + \CC$, on a $L \twoheadrightarrow \coim_{\lexp + \CC} f$
et donc $\coim_{\lexp + \CC} f$ est un objet de $\FC$.

En outre $f:L \longrightarrow L'$ se factorise par $\im_\FC f$ et comme le composÈ
$\ker_\FC f \longrightarrow L \longrightarrow \im_\FC f$ est nul, il induit un morphisme
canonique $\coim_\FC f \longrightarrow \im f$.
\end{proof}

\begin{defi} \label{defi-strict} (cf. \cite{quasi-ab} \S 1.1.3)   \phantomsection
La flËche $f$ est dite \emph{stricte} si le morphisme canonique
$\coim_\FC f \longrightarrow \im_\FC f$ est un isomorphisme; on notera alors $f: L \rarrow L'$.
Une suite $0 \rightarrow L_1 \longrightarrow L_2 \longrightarrow L_3 \rightarrow 0$
sera dite \emph{strictement exacte} si $L_1$ est un noyau de $L_2 \longrightarrow L_3$
et $L_3$ un conoyau de $L_1 \longrightarrow L_2$.
\end{defi}

\rem ($f:L \longrightarrow L'$) est un monomorphisme strict et on note $L \harrow L'$
(resp. un Èpimorphisme strict et on note $L \tarrow L'$) si et seulement si son conoyau 
dans $\CC$ (resp. son noyau dans $\lexp {+} \CC$) est un objet de $\FC$. Dans ce cas
$0 \rightarrow L \longrightarrow L' \longrightarrow \coker_\FC f \rightarrow 0$
(resp. $0 \rightarrow \ker_\FC f \longrightarrow L \longrightarrow L' \rightarrow 0$) est
strictement exacte.

\rem la composÈe de deux monomorphismes (resp. Èpimorphismes) stricts est strict; rÈciproquement si $u \circ v$ est un monomorphisme (resp. Èpimorphisme) strict alors
$v$ (resp. $u$) est strict.\footnote{Ces propriÈtÈs sont gÈnÈrales dans les catÈgories quasi-
abÈliennes cf. la proposition 1.1.8 de \cite{quasi-ab}.}

\begin{prop} 
La catÈgorie $\FC=\CC \cap \lexp {+} \CC$ des objets libres (resp. divisibles) de $\CC$
(resp. de $\lexp {+} \CC$) est quasi-abÈlienne au sens de la dÈfinition 1.1.3 de \cite{quasi-ab}.
\end{prop}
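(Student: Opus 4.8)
The plan is to verify the three requirements of Definition~1.1.3 of \cite{quasi-ab}: that $\FC$ is pre-abelian (additive, with kernels and cokernels), that the push-out of a strict monomorphism along an arbitrary morphism of $\FC$ is again a strict monomorphism, and dually that the pull-back of a strict epimorphism along an arbitrary morphism is again a strict epimorphism.

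First one checks that $\FC$ is pre-abelian. It is a full subcategory of $\CC$ containing $0$, and it is stable under finite direct sums: from the short exact sequence $0 \to L_1 \to L_1 \oplus L_2 \to L_2 \to 0$ in $\CC$ and the stability under extensions of the torsion-free class $\FC$ of the torsion theory $(\TC,\FC)$ on $\CC$, the biproduct $L_1 \oplus L_2$ again lies in $\FC$; hence $\FC$ is additive. Kernels and cokernels in $\FC$ exist by the two preceding lemmas, which moreover give $\ker_\FC f = \ker_\CC f$ and $\coker_\FC f = \coker_{\lexp + \CC} f$; in particular all push-outs and pull-backs exist in $\FC$.

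For the push-out axiom I would argue inside $\CC$. By the remark following Definition~\ref{defi-strict}, a morphism $f\colon L \to L'$ of $\FC$ is a strict monomorphism exactly when it is a monomorphism in $\CC$ with $C := \coker_\CC f \in \FC$. Given such an $f$ and any $g\colon L \to M$ in $\FC$, form the push-out square of $f$ and $g$ \emph{inside the abelian category $\CC$}, with vertex $P$ and legs $g'\colon L'\to P$, $f'\colon M\to P$. Since $f$ is a monomorphism of $\CC$ so is $f'$, and the canonical comparison identifies $\coker_\CC f' \simeq \coker_\CC f = C$; thus $0 \to M \to P \to C \to 0$ is exact in $\CC$ with $M,C\in\FC$, so $P\in\FC$ because $\FC$ is stable under extensions in $\CC$. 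As $\FC$ is full in $\CC$ and $P\in\FC$, the universal property of $P$ restricts to $\FC$, so $P$ is also the push-out in $\FC$; and $f'$ has cokernel $C\in\FC$ in $\CC$, hence is a strict monomorphism of $\FC$.

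For the pull-back axiom one works dually, but now inside $\lexp + \CC$. By the same remark, $f\colon L'\to L$ in $\FC$ is a strict epimorphism exactly when it is an epimorphism in $\lexp + \CC$ with $K := \ker_{\lexp + \CC} f \in \FC$. For $g\colon M\to L$ in $\FC$, form the pull-back of $f$ along $g$ in $\lexp + \CC$, with vertex $P$ and projection $f''\colon P\to M$; then $f''$ is an epimorphism of $\lexp + \CC$ with $\ker_{\lexp + \CC} f'' \simeq K$, so $0 \to K \to P \to M \to 0$ is exact in $\lexp + \CC$ with $K,M\in\FC$. Since $\FC$ is the torsion class of the torsion theory $(\FC,\TC[-1])$ on $\lexp + \CC$ furnished by Corollary~\ref{coro-pp}, it is stable under extensions, so $P\in\FC$; as before $P$ is then the pull-back in $\FC$, and $f''$ has kernel $K\in\FC$ in $\lexp + \CC$, hence is a strict epimorphism of $\FC$. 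Together with pre-abelianity this gives the two stability axioms, so $\FC$ is quasi-abelian.

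The only delicate point, and the one I would be careful about, is the bookkeeping: strict monomorphisms of $\FC$ and their cokernels are governed by $\CC$, whereas strict epimorphisms and their kernels are governed by $\lexp + \CC$, so the push-out must be formed in $\CC$ and the pull-back in $\lexp + \CC$. Once this is arranged, the fact that the resulting vertex already lies in $\FC$ — and therefore computes the universal object of $\FC$ — is a formal consequence of the stability of $\FC$ under extensions, i.e. of the torsion theories of Proposition~\ref{prop-noetherienne} and Corollary~\ref{coro-pp}.
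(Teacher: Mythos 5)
Your proof is correct and follows essentially the same strategy as the paper: after noting that $\FC$ is pre-abelian (via the preceding lemmas on kernels and cokernels), you verify the axioms (QA) and (QA*) by forming the relevant push-out or pull-back in an ambient abelian category, showing the vertex lies in $\FC$ by extension-closure, and concluding that the leg is strict because its $\CC$-cokernel (resp.\ $\lexp + \CC$-kernel) is unchanged. The only divergence is cosmetic: the paper performs \emph{both} the push-out and the pull-back inside $\CC$ — using the fact that a strict epimorphism of $\FC$ is actually surjective in $\CC$ (its $\CC$-image equals its $\FC$-coimage) so that extension-closure of the torsion-free class $\FC$ in $\CC$ applies twice — whereas you carry out the pull-back in $\lexp + \CC$ and invoke extension-closure of $\FC$ as the torsion class of $(\FC,\TC[-1])$ there. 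Both routes produce the same square (the two ambient short exact sequences agree), so your remark that the pull-back ``must'' be formed in $\lexp + \CC$ is slightly overstated, but it is a clean and arguably more symmetric bookkeeping choice.
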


\begin{proof}
Nous avons dÈj‡ vu que $\FC$ possÈdait des noyaux et conoyaux.
Soit alors $f:L_1 \tarrow L_2$ un Èpimorphisme strict et
$$\xymatrix{
L_1 \ar[r] & L_2 \\
L'_1 \ar[r] \ar[u] & L'_2 \ar[u]
}$$
un diagramme cartÈsien. Notons $L=\ker_\FC (L_1 \longrightarrow L_2)$ de sorte que
$0 \rightarrow L \longrightarrow L_1 \longrightarrow L_2 \rightarrow 0$ est une suite
exacte courte de $\CC$. Soit alors $\widetilde L_1$ le tirÈ en arriËre dans $\CC$
$$\xymatrix{
0 \ar[r] & L \ar[r] & L_1 \ar[r] & L_2 \ar[r] & 0 \\
0 \ar[r] & L \ar@{=}[u] \ar[r] & \widetilde L_1 \ar@{-->}[u] \ar@{-->}[r] & L'_2 \ar[r] \ar[u] & 0.
}$$
Comme $L$ et $L'_2$ sont libres dans $\CC$ alors $\widetilde L_1$ est un objet de $\FC$
et l'Èpimorphisme strict $\widetilde L_1 \tarrow L'_2$ se factorise par $L'_1$ de sorte que
$L'_1 \twoheadrightarrow L'_2$ est strict d'aprËs la remarque prÈcÈdente.
Dualement pour $L_1 \longrightarrow L_2$ un monomorphisme strict et un diagramme
cocartÈsien
$$\xymatrix{
L'_1 \ar[r] & L'_2 \\
L_1 \ar[r] \ar[u] & L_2, \ar[u]
}$$
le conoyau $L$ dans $\CC$ de $L_1 \harrow L_2$ est libre. On note $\widetilde L_2$ le 
poussÈ en avant dans $\CC$
$$\xymatrix{
0 \ar[r] & L'_1 \ar@{-->}[r] & \widetilde L_2 \ar[r] & L\ar[r] & 0 \\
0 \ar[r] & L_1 \ar[u] \ar[r] & L_2 \ar@{-->}[u] \ar[r] & L \ar[r] \ar@{=}[u] & 0.
}$$
Comme prÈcÈdemment $\widetilde L_2$ est un objet de $\FC$ et le monomorphisme
strict $L'_1 \harrow \widetilde L_2$ se factorise par $L'_2$ de sorte que d'aprËs la remarque
prÈcÈdente, $L'_1 \hookrightarrow L'_2$ est strict.
Ainsi les conditions
(QA) et (QA*) de \cite{quasi-ab} sont vÈrifiÈes et $\FC$ est quasi-abÈlienne.
\end{proof}

\begin{nota} \label{nota-htarrow}  \phantomsection
Un morphisme $f:L \longrightarrow L'$ qui est ‡ la fois un monomorphisme et un 
Èpimorphisme sera notÈ $f: L \htarrow L'$; on dit aussi que $f$ est un bimorphisme.
\end{nota}

\rem la flËche canonique $\coim_\FC f \longrightarrow \im_\FC f$ est un exemple
de bimorphisme.
Avec ces notations la factorisation canonique d'une flËche dans une catÈgorie
quasi-abÈlienne s'Ècrit comme suit.

\begin{prop}  \phantomsection\label{prop-facto-can}
Tout $f:L \longrightarrow L'$ admet une factorisation canonique
$$L \tarrow \coim_\FC f \htarrow \im_\FC f \harrow L'.$$
\end{prop}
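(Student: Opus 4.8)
The plan is to produce the three arrows of the asserted factorisation one at a time, from the universal properties gathered in \S\ref{para-quasi}, and then to recognise that the middle arrow is the canonical morphism $\coim_\FC f \longrightarrow \im_\FC f$, which has already been observed to be a bimorphism. First I would take the canonical projection $p: L \longrightarrow \coim_\FC f=\coker_\FC(\ker_\FC f \longrightarrow L)$ and the canonical inclusion $m: \im_\FC f=\ker_\FC(L' \longrightarrow \coker_\FC f) \longrightarrow L'$. Since $\FC$ is quasi-ab\'elienne, every cokernel is a strict epimorphism and every kernel a strict monomorphism (cf. \cite{quasi-ab}), so $p: L \tarrow \coim_\FC f$ and $m: \im_\FC f \harrow L'$; alternatively, strictness of $p$ and $m$ can be read off from the characterisation of strict (epi/mono)morphisms in the remark following Definition~\ref{defi-strict}, using that $\coker_\FC f=\coker_{\lexp + \CC} f$ and $\im_\FC f=\im_{\lexp + \CC} f$ are objects of $\FC$.

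Next I would build the middle arrow by chaining universal properties. The composite $L \longrightarrow L' \longrightarrow \coker_\FC f$ is zero by the defining property of the cokernel, so $f$ factors as $f=m\circ f'$ for a unique $f': L \longrightarrow \im_\FC f$. Precomposing $f'$ with $\ker_\FC f \longrightarrow L$ and using that $m$ is a monomorphism shows that $f'$ annihilates $\ker_\FC f \longrightarrow L$; hence $f'$ factors through the cokernel, $f'=h\circ p$ with $h: \coim_\FC f \longrightarrow \im_\FC f$, and then $m\circ h\circ p=m\circ f'=f$, which is the factorisation sought. The arrow $h$ so obtained is exactly the canonical arrow $\coim_\FC f \longrightarrow \im_\FC f$ attached to $f$ in the second lemma of \S\ref{para-quasi} (the construction is the same), and by the remark following Notation~\ref{nota-htarrow} this arrow is a bimorphism, $h: \coim_\FC f \htarrow \im_\FC f$. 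Assembling the three pieces gives $L \tarrow \coim_\FC f \htarrow \im_\FC f \harrow L'$, and canonicity is automatic since cokernels, kernels and all intermediate factorisations used are unique up to unique isomorphism.

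I do not expect a serious obstacle: the substance has already gone into establishing that $\FC$ is quasi-ab\'elienne and that $\coim_\FC f \longrightarrow \im_\FC f$ is a bimorphism. The one point needing a little care — and it is precisely what the lemmas of \S\ref{para-quasi} provide — is that $\ker_\FC$ and $\coker_\FC$ are computed, a priori, in the two distinct abelian categories $\CC$ and $\lexp + \CC$ (one has $\ker_\FC f=\hi^0 \ker_{\lexp + \CC} f$ and $\coker_\FC f=\lexp + \hi^0 \coker_\CC f$), so one must make sure the factorisations above genuinely take place inside $\FC$ rather than merely in $\CC$ or in $\lexp + \CC$; this is exactly the content of those lemmas. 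Should one prefer not to invoke the remark after Notation~\ref{nota-htarrow}, the alternative is to verify directly that $h$ is monic and epic, which in a quasi-ab\'elienne category follows from stability of strict epimorphisms under pullback (condition (QA)) and its dual (QA*); but this is already subsumed in that remark.
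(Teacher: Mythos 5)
Your argument is correct, and it is essentially the proof the paper implicitly delegates: the paper states the proposition with no proof, the preceding remark (``Avec ces notations la factorisation canonique d'une fl\`eche dans une cat\'egorie quasi-ab\'elienne s'\'ecrit comme suit'') making clear that it is the standard factorisation in a quasi-abelian category, cf.\ \cite{quasi-ab} (in Schneiders' terminology this is the existence of the canonical decomposition through the ``image''~$\im f$ and ``coimage''~$\coim f$, with kernels and cokernels being strict monos/epis). Your reconstruction --- $p$ a cokernel hence strict epi, $m$ a kernel hence strict mono, $f$ factoring first through $\im_\FC f$ then through $\coim_\FC f$, identification of the resulting middle arrow with the canonical one from the second lemma of \S\ref{para-quasi}, and the bimorphism property read off from the remark after Notation~\ref{nota-htarrow} --- matches exactly the route the paper presupposes, and your closing caveat (that one must check the factorisation lives in $\FC$ and not merely in $\CC$ or $\lexp + \CC$, which is the whole content of those lemmas) is precisely the nontrivial point.
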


\begin{defi}  \phantomsection \label{defi-sature}
Pour $f$ un monomorphisme, $\im_\FC$ est appelÈ
\emph{le saturÈ} de $\coim_\FC f$.
\end{defi}

\begin{lemm} \phantomsection \label{lem-Tpasnul}
Dans le cas o˘ $\CC$ est $\Om$-linÈaire,
un morphisme $f$ de $\FC$ est un bimorphisme si et seulement si $f\otimes_\Om \Km$ est 
un isomorphisme.
Dans ce cas pour tout entier $n$ assez grand, il existe $g:L' \htarrow L$ tel que
$g \circ f=\varpi^n \Id$.
\end{lemm}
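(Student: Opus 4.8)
The plan is to reduce the statement to the ambient abelian categories, using the descriptions of kernels and cokernels in $\FC$ obtained above. Recall that for $f\colon L\longrightarrow L'$ in $\FC$ one has $\ker_\FC f=\ker_\CC f$, so $f$ is a monomorphism in $\FC$ exactly when $\ker_\CC f=0$; dually $\coker_\FC f=\lexp + \hi^0 \coker_\CC f$ is the free quotient $(\coker_\CC f)_\FC$ of $\coker_\CC f$, so $f$ is an epimorphism in $\FC$ exactly when $\coker_\CC f$ is $\varpi$-torsion. Hence $f$ is a bimorphism in $\FC$ if and only if $\ker_\CC f=0$ and $\coker_\CC f\in\TC$.

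First I would establish the equivalence with ``$f\otimes_\Om\Km$ is an isomorphism''. By the remark following Proposition~\ref{prop-noetherienne} the functor $-\otimes_\Om\Km$ is exact and annihilates an object precisely when that object is $\varpi$-torsion; consequently $\ker(f\otimes_\Om\Km)=(\ker_\CC f)\otimes_\Om\Km$ and $\coker(f\otimes_\Om\Km)=(\coker_\CC f)\otimes_\Om\Km$. If $f$ is a bimorphism the first group vanishes and the second vanishes because $\coker_\CC f$ is torsion, so $f\otimes_\Om\Km$ is an isomorphism. Conversely, if $f\otimes_\Om\Km$ is an isomorphism then $(\ker_\CC f)\otimes_\Om\Km=0$, so $\ker_\CC f$ is $\varpi$-torsion; but it is a subobject of $L$, hence $\varpi$-torsion-free, hence $0$, and $f$ is a monomorphism. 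Likewise $(\coker_\CC f)\otimes_\Om\Km=0$ gives $\coker_\CC f\in\TC$, so $f$ is an epimorphism, and $f$ is a bimorphism.

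For the final assertion, assume $f$ is a bimorphism and fix $N$ with $\varpi^N\,\Id_{\coker_\CC f}=0$. Since $f$ is a monomorphism in $\CC$, the canonical sequence $0\rightarrow L\overset{f}{\longrightarrow}L'\overset{\pi}{\longrightarrow}\coker_\CC f\rightarrow 0$ is exact and $\pi\circ(\varpi^N\,\Id_{L'})=\varpi^N\pi=0$, so $\varpi^N\,\Id_{L'}$ factors through $\ker\pi=\im_\CC f$; composing with the isomorphism $L\overset{\sim}{\longrightarrow}\im_\CC f$ induced by $f$ produces $g\colon L'\longrightarrow L$ with $f\circ g=\varpi^N\,\Id_{L'}$. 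Then $f\circ(g\circ f)=(f\circ g)\circ f=\varpi^N f=f\circ(\varpi^N\,\Id_L)$, and cancelling the monomorphism $f$ on the left yields $g\circ f=\varpi^N\,\Id_L$. To see that $g$ is a bimorphism $L'\htarrow L$: the endomorphisms $\varpi^N\,\Id_L$ and $\varpi^N\,\Id_{L'}$ are themselves bimorphisms of $\FC$ (their tensor product with $\Km$ is invertible), so $g\circ f$ being an epimorphism forces $g$ to be an epimorphism, and $f\circ g$ being a monomorphism forces $g$ to be a monomorphism. Finally, for any $n\geq N$ the morphism $\varpi^{\,n-N}g=(\varpi^{\,n-N}\,\Id_L)\circ g$ is again a bimorphism (a composite of bimorphisms) and satisfies $(\varpi^{\,n-N}g)\circ f=\varpi^n\,\Id_L$, which gives the statement for every sufficiently large $n$.

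The only genuinely delicate step is the factorization of $\varpi^N\,\Id_{L'}$ through $\im_\CC f\simeq L$; the rest is bookkeeping between the three notions of (co)kernel and the elementary left/right cancellation properties of monomorphisms and epimorphisms. It is worth emphasising that a bimorphism need not be strict --- indeed $\varpi^N\,\Id_L$ is the archetypal non-invertible bimorphism --- which is precisely why one obtains $g\circ f=\varpi^N\,\Id$ and not $g\circ f=\Id$.
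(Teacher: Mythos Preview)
Your proof is correct. For the equivalence with ``$f\otimes_\Om\Km$ is an isomorphism'' you argue essentially as the paper does, the only cosmetic difference being that you work entirely inside $\CC$ (using that $f$ is an epimorphism in $\FC$ iff $\coker_\CC f$ is torsion), whereas the paper invokes both $\CC$ and $\lexp{+}\CC$ symmetrically.

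For the construction of $g$ you take a genuinely more direct route than the paper. The paper builds $g$ via a pushout/pullback diagram: pushing the extension $0\to L\to L'\to T\to 0$ along $\varpi^n:L\to L$ yields the split extension $L_0\simeq L\oplus T$, and $g$ is then the composite $L'\to L_0\twoheadrightarrow L$. You instead observe that $\varpi^N\Id_{L'}$ kills $T=\coker_\CC f$, hence factors through $\im_\CC f\simeq L$, and then cancel the monomorphism $f$ to obtain $g\circ f=\varpi^N\Id_L$. Both constructions produce the same morphism; yours avoids the auxiliary object $L_0$ and makes the identity $g\circ f=\varpi^N\Id_L$ explicit (the paper leaves this implicit, recording only that $g$ is a bimorphism). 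Your extra care in verifying that $g$ is a bimorphism via the cancellation properties, and in handling all $n\geq N$, is also more complete than the paper's treatment.
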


\begin{proof}
Soit $f:L \htarrow L'$ un bimorphisme; $f$ est un monomorphisme
de $\CC$ (resp. un Èpimorphisme de $\lexp + \CC$) de sorte que $f \otimes_\Om \Km$ 
est un monomorphisme (resp. un Èpimorphisme). Comme dans une catÈgorie abÈlienne
les notions de bimorphisme et isomorphisme coÔncident, 
on en dÈduit que $f \otimes_\Om \Km$ est un isomorphisme.
RÈciproquement si $f \otimes_\Om \Km$ est un isomorphisme alors $\ker_\FC f \otimes_\Om
\Km$ et $\coker_\FC f \otimes_\Om \Km$ sont nuls et donc $\ker_\FC f$ et 
$\coker_\FC f$ aussi. Or
$\ker_\FC f=\ker_\CC f$ (resp. $\coker_\FC f=\coker_{\lexp + \CC} f$) de sorte que
$f$ est un monomorphisme de $\CC$ (resp. un Èpimorphisme de $\lexp + \CC$).

Au bimorphisme $L \htarrow L'$ est associÈ une suite exacte
dans $\CC$, $0 \rightarrow L \longrightarrow L' \longrightarrow T \rightarrow 0$
o˘ $T$ est un objet de $\varpi^n$ torsion de $\CC$ pour $n$ assez grand. Soit alors
$$\xymatrix{
0 \ar[r] & L \ar[r] \ar@{=}[d] & L_0 \ar@{-->}[r] \ar@{-->}[d] &
T \ar[r] \ar[d]^{\times \varpi^n} & 0 \\
0 \ar[r] & L \ar@{^{(}->}[d]^{\times \varpi^n} \ar[r] & L'
\ar@{^{(}-->}[d] \ar[r] & T \ar[r] \ar@{=}[d] & 0 \\
0 \ar[r] & L \ar@{-->}[r] & L_0 \ar[r] & T \ar[r] & 0
}$$
o˘ $L_0 \simeq L  \oplus T$. La composÈe 
$L' \longrightarrow L_0 \longrightarrow L$ est alors un monomorphisme de $\CC$
dont le conoyau est de torsion, i.e. $L' \htarrow L$.
\end{proof}

\rem la relation $L \htarrow L'$ est d'Èquivalence sur $\FC$.

\begin{defi}  \phantomsection \label{defi-F-filtration}
Pour $L$ un objet de $\FC$,
on dira que 
$$L_1 \subset L_2 \subset \cdots \subset L_e=L$$ 
est une $\FC$-filtration
si pour tout $1 \leq i \leq e-1$, le monomorphisme
$L_i \hookrightarrow L_{i+1}$ est strict. Dualement 
$L=L_{-e} \twoheadrightarrow L_{1-e} \twoheadrightarrow\cdots \twoheadrightarrow L_{-1}$
est une $\FC$-cofiltration si pour tout $1 \leq i \leq e-1$,  
l'Èpimorphisme $L_{-i-1} \twoheadrightarrow L_{-i}$ est strict.
\end{defi}

ConsidÈrons ‡ nouveau une situation de recollement comme dans le paragraphe
prÈcÈdent. Selon la notation \ref{nota-pT}, considÈrons les foncteurs
$\lexp p j_!$, $\lexp {p+} j_!, \lexp p j_*, \lexp {p+} j_*$ ainsi que les foncteurs
extensions intermÈdiaires $\lexp p j_{!*}$ et $\lexp {p+} j_{!*}$. D'aprËs \cite{juteau} 2.42-2.46,
on a les triangles distinguÈs:
$$\begin{array}{l}
\lexp p j_! \rightarrow \lexp {p+} j_! \rightarrow \lexp p i_* \hi^{-1}_{tors} i^* j_* [1] \leadsto \\
\lexp {p+} j_! \rightarrow \lexp p j_{!*} \rightarrow \lexp p i_* \hi_{libre}^{-1} i^* j_*[1] \leadsto \\
\lexp p j_{!*} \rightarrow \lexp {p+} j_{!*} \rightarrow \lexp p i_* \hi^0_{tors} i^* j_* \leadsto \\
\lexp {p+} j_{!*} \rightarrow \lexp p j_* \rightarrow \lexp p i_* \hi_{libre}^0 i^* j_* \leadsto \\
\lexp p j_* \rightarrow \lexp {p+} j_* \rightarrow \lexp p i_* \hi^1_{tors} i^* j_* [-1] \leadsto
\end{array}$$

\begin{lemm}
Soit $L \in \FC_U$ alors $\lexp {p+} j_! L$, $\lexp p j_{!*} L$, $\lexp {p+} j_{!*} L$
et $\lexp p j_* L$ sont des objets de $\FC$.
\end{lemm}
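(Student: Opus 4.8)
The plan is to use the five distinguished triangles recalled just before the statement, reading off each cofiber and showing it lies in the heart $\CC$; combined with the fact that an object of the triangulated category $\DC$ sitting in a triangle between two objects of $\CC$ is again in $\CC$ if the connecting map vanishes on cohomology, and then checking the freeness condition $\hi^0 \in \FC$ via Corollary~\ref{coro-pp}. Concretely, fix $L \in \FC_U$, so $L$ is in particular $\varpi$-free in $\CC_U$.

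\begin{proof}
Since $L \in \FC_U \subset \CC_U$ we already know $\lexp p j_! L \in \CC$ and $\lexp p j_* L \in \CC$. By Corollary~\ref{coro-torsion0} one has $\lexp p j_!(\FC_U) \subset \FC$ and $\lexp p j_*(\FC_U) \subset \FC$ as soon as $L$ is $\varpi$-free; but we want the statement for $\lexp {p+} j_!$, $\lexp p j_{!*}$, $\lexp {p+} j_{!*}$ and $\lexp p j_* L$. I proceed through the chain of triangles. From the first triangle
$$\lexp p j_! L \to \lexp {p+} j_! L \to \lexp p i_* \hi^{-1}_{tors} i^* j_* L [1] \leadsto,$$
the third term is $\lexp p i_*$ applied to a $\varpi$-torsion object placed in degree~$0$ of $\CC_F$, hence an object of $\CC$; as $\lexp p j_! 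L \in \CC$ as well, $\lexp {p+} j_! L$ lies in the triangulated category between two objects of $\CC$, and the long exact cohomology sequence forces $\hi^k(\lexp {p+} j_! L)=0$ for $k\neq 0$, so $\lexp {p+} j_! L \in \CC$. For freeness: the triangle shows $\hi^0(\lexp {p+} j_! L)$ surjects onto a subobject of the torsion object and has kernel a quotient of the $\varpi$-free object $\lexp p j_! L \in \FC$; a diagram chase (using that $\FC$ is closed under the relevant sub/quotient operations via the hereditary torsion theory $(\TC,\FC)$ of \ref{prop-noetherienne} and \ref{coro-pp}) shows $\hi^0(\lexp {p+} j_! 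L)\in\FC$, hence $\lexp {p+} j_! L \in \FC$. Actually it is cleaner to invoke directly that $\lexp {p+} j_!$ is the $j_!$-functor for the $(p+)$-$t$-structure, whose heart $\lexp + \CC$ carries the torsion theory $(\FC,\TC[-1])$, and $L \in \FC = \FC_U \cap \lexp + \CC_U$ is the torsion-free part there; then $\lexp{p+}j_!(\FC_U)\subset\FC$ by the analogue of Corollary~\ref{coro-torsion0} for the $(p+)$-structure, landing in $\lexp+\CC$, and since $\FC = \CC\cap\lexp+\CC$ it remains to see $\lexp{p+}j_!L\in\CC$, which the first triangle gives.

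For $\lexp p j_{!*} L$ and $\lexp {p+} j_{!*} L$ I use the second, third and fourth triangles in the same manner: each cofiber is $\lexp p i_*$ of a bounded complex concentrated in a single degree with $\varpi$-torsion or $\varpi$-free cohomology, hence in $\CC$, and the two neighbours in each triangle are already known (inductively along the chain) to be in $\CC$, so the middle term is in $\CC$; then the torsion-theory bookkeeping from \ref{coro-torsion0} and \ref{coro-pp} gives membership in $\FC$. For $\lexp p j_* L$: Corollary~\ref{coro-torsion0} gives $\lexp p j_*(\FC_U)\subset\FC$ directly once we know $L$ is $\varpi$-free, and $\lexp p j_* L \in \CC$ since $j_*$ is exact for this $t$-structure; alternatively read off the fourth and fifth triangles as above.

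The main obstacle is organizing the torsion bookkeeping: at each node one must pin down precisely \emph{where} in the exact sequences the free part and torsion part sit, to conclude the middle cohomology object is $\varpi$-free and not merely an extension involving torsion. The key technical point is the compatibility, recorded in the remark after Corollary~\ref{coro-pp}, between $A_\FC$, $A_\TC$ and the functors $\lexp + \hi^i$: a morphism out of a free object whose composite with a torsion object vanishes factors through the free quotient. Applying this repeatedly to the boundary maps of the five triangles, together with the hereditarity of $(\TC,\FC)$ on $\CC$ and of $(\FC,\TC[-1])$ on $\lexp + \CC$, yields that all four objects have $\varpi$-free $\hi^0$ and vanishing $\hi^{\neq 0}$, i.e. lie in $\FC = \CC \cap \lexp + \CC$.
\end{proof}
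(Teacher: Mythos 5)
Your proof reaches the right idea by the end, but several intermediate assertions are wrong, and you should be aware that the paper states this lemma without proof precisely because it is an immediate two-line consequence of Corollary~\ref{coro-torsion0} applied once to the $p$-structure and once to the $p+$-structure.

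The clean argument you gesture at in the middle is the correct one and is complete on its own: Corollary~\ref{coro-torsion0} gives directly $\lexp p j_*(\FC_U) \subset \FC$ and $\lexp p j_{!*}(\FC_U) \subset \FC$. For the other two, apply the corollary to the $t$-structure $p+$, whose heart $\lexp + \CC$ carries the torsion theory $(\FC,\TC[-1])$ of Corollary~\ref{coro-pp}. Since $\FC$ plays the role of the \emph{torsion} class in that pair, the analogue of the line $\lexp p j_!(\TC_U)\subset\TC$ and $\lexp p j_{!*}(\TC_U)\subset\TC$ reads $\lexp {p+} j_!(\FC_U)\subset\FC$ and $\lexp {p+} j_{!*}(\FC_U)\subset\FC$. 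That already puts each of the four objects in $\FC=\CC\cap\lexp+\CC$; there is no further membership in $\CC$ to check, and no need to invoke the five triangles at all.

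The rest of your argument has concrete errors you should fix. First, Corollary~\ref{coro-torsion0} does \emph{not} say $\lexp p j_!(\FC_U)\subset\FC$; it says $\lexp p j_!(\TC_U)\subset\TC$. In fact $\lexp p j_! L$ is in general \emph{not} free for $L$ free --- its torsion subobject is $\lexp p i_* \hi^{-1}_{\mathrm{tors}} i^* j_* L$, and that is exactly why the lemma replaces $\lexp p j_!$ by $\lexp {p+} j_!$. Second, your reading of the first triangle drops the shift: the third term $\lexp p i_* \hi^{-1}_{\mathrm{tors}} i^* j_* L [1]$ sits in cohomological degree $-1$, not $0$. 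Rotating, the triangle encodes the short exact sequence $0 \rightarrow \lexp p i_* \hi^{-1}_{\mathrm{tors}} i^* j_* L \rightarrow \lexp p j_! L \rightarrow \lexp {p+} j_! L \rightarrow 0$ in $\CC$; the injectivity of the first arrow is exactly what you would need to make the \og{}long exact sequence kills $\hi^{k\neq 0}$\fg{} step work, and it is not automatic from the shape of the triangle alone. Third, your remark that $L\in\FC_U$ is \og{}the torsion-free part\fg{} of $\lexp+\CC_U$ is backwards: in the torsion theory $(\FC,\TC[-1])$ on $\lexp + \CC$, $\FC$ is the torsion class (it is the divisible objects), while $\TC[-1]$ is the torsion-free class. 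Fortunately this sign error does not affect the conclusion you draw, because it is precisely the torsion class that $\lexp{p+}j_!$ and $\lexp{p+}j_{!*}$ preserve. I would recommend rewriting the proof as the short paragraph above and dropping the triangle bookkeeping entirely.
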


\begin{nota}  \phantomsection \label{nota-htarrowF}
Pour $L,L' \in \FC$, on notera $f:L \htarrow_+ L'$ un bimorphisme
dont le conoyau dans $\CC$ est dans l'image essentielle de $\CC_F$.
\end{nota}

\noindent \textit{Remarques}: a) pour tout $L \in \FC_U$, 
on voit que $\lexp {p+} j_! L \longrightarrow \lexp p j_{!*} L$ (resp.
$\lexp {p+} j_{!*} L \longrightarrow \lexp p j_* L$) est un Èpimorphisme
(resp. un monomorphisme) strict, et que
$$\lexp {p+} j_! L \tarrow \lexp p j_{!*} L \htarrow_+ \lexp {p+} j_{!*} L \harrow \lexp p j_* L$$
est la factorisation canonique du morphisme
$\lexp {p+} j_! L \longrightarrow \lexp p j_* L$ au sens de \ref{prop-facto-can}.

b) La construction de la preuve du lemme \ref{lem-Tpasnul}, fournit 
$\lexp {p+} j_{!*} L \htarrow \lexp p j_{!*} L$ dont le conoyau dans $\CC$
est $j_{!*} (L/l^nL)$.

\begin{lemm}  \phantomsection \label{lem-htarrow}
Soient $L \in \FC_U$, $M \in \FC_X$ et $f:\lexp p j_{!*} L \htarrow_+ M$. 
Alors  il existe $g: M \htarrow_+ \lexp {p+} j_{!*} L$ 
tel que  $g \circ f:\lexp p j_{!*} L \htarrow_+ \lexp {p+} j_{!*} L$ est le morphisme canonique.
\end{lemm}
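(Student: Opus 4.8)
The plan is to realise $g$ as the canonical adjunction morphism $M\to\lexp p j_* L$, corestricted along the inclusion $\lexp {p+} j_{!*} L\hookrightarrow\lexp p j_* L$. Write $P:=\lexp p j_{!*} L$ and $P^+:=\lexp {p+} j_{!*} L$, which lie in $\FC$ because $L\in\FC_U$, and recall from remark a) following Notation~\ref{nota-htarrowF} that the canonical map $\lexp {p+} j_! L\to\lexp p j_* L$ has canonical factorisation $\lexp {p+} j_! L\tarrow P\htarrow_+ P^+\harrow\lexp p j_* L$; I will identify $P$ and $P^+$ with their images in $\lexp p j_* L$, so that $P\hookrightarrow P^+\hookrightarrow\lexp p j_* L$ in $\CC$ and the canonical morphism $\can\colon P\htarrow_+ P^+$ of the statement is the one induced by these inclusions. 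Since $f$ is a bimorphism, $f\otimes_\Om\Km$ is an isomorphism by Lemma~\ref{lem-Tpasnul}; tensoring the exact sequence $0\to P\xrightarrow{f}M\to\coker_\CC f\to 0$ of $\CC$ with the flat $\Om$-algebra $\Km$ shows $\coker_\CC f$ is of $\varpi$-torsion, while applying the exact functor $j^*$ (and $j^*i_*=0$) gives $j^* M\cong j^* P=L$. Hence the $(j^*,\lexp p j_*)$-adjunction on hearts furnishes a canonical $\alpha\colon M\to\lexp p j_* L$ with $\alpha\circ f=\alpha_P$, the canonical monomorphism $P\hookrightarrow\lexp p j_* L$.

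Next I would show $\alpha$ is a monomorphism of $\CC$ with image contained in $P^+$. Its kernel $K:=\ker_\CC\alpha$ is a $\CC$-subobject of the $\varpi$-free object $M$, hence $\varpi$-free; as $K\cap\im_\CC f\cong\ker_\CC\alpha_P=0$, it embeds into $\coker_\CC f=M/\im_\CC f$, which is of $\varpi$-torsion, so $K=0$. Thus $\alpha$ identifies $M$ with a subobject of $\lexp p j_* L$ containing $\im_\CC\alpha_P=P$, and $\alpha$ induces an isomorphism $\coker_\CC f=M/\im_\CC f\xrightarrow{\sim}\im_\CC\alpha/P$, so the latter is of $\varpi$-torsion. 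Now, evaluating at $L$ the distinguished triangles recalled just before the lemma, $\lexp p j_{!*}\to\lexp {p+} j_{!*}\to\lexp p i_*\hi^0_{tors}i^*j_*\leadsto$ and $\lexp {p+} j_{!*}\to\lexp p j_*\to\lexp p i_*\hi^0_{libre}i^*j_*\leadsto$, exhibits $P^+/P$ as a $\varpi$-torsion subobject of $\lexp p j_* L/P$ whose quotient $\lexp p j_* L/P^+$ is $\varpi$-torsion-free; hence $P^+/P$ is exactly the $\varpi$-torsion subobject of $\lexp p j_* L/P$. Since $\im_\CC\alpha/P$ is a torsion subobject of $\lexp p j_* L/P$, it is contained in $P^+/P$, whence $\im_\CC\alpha\subseteq P^+$ inside $\lexp p j_* L$.

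It follows that $\alpha$ corestricts to a morphism $g\colon M\to P^+$ with $\iota\circ g=\alpha$, where $\iota\colon P^+\hookrightarrow\lexp p j_* L$ denotes the inclusion. Then $\iota\circ g\circ f=\alpha\circ f=\alpha_P=\iota\circ\can$, and cancelling the monomorphism $\iota$ gives $g\circ f=\can$. Moreover $g\otimes_\Om\Km=(\can\otimes_\Om\Km)\circ(f\otimes_\Om\Km)^{-1}$ is an isomorphism, so $g$ is a bimorphism by Lemma~\ref{lem-Tpasnul}; and $\coker_\CC g=P^+/\im_\CC\alpha$ restricts to $0$ over $U$, hence lies in the essential image of $\CC_F$. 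Therefore $g\colon M\htarrow_+ P^+$ satisfies $g\circ f=\can$, as required.

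The step to handle with most care is the coherence of the various ``canonical'' morphisms used above: one must verify that $\alpha_P$, $\alpha$, the inclusion $\iota$ and the canonical bimorphism $\can\colon P\htarrow_+ P^+$ all sit in a single commutative diagram over $\lexp p j_* L$. This is precisely the content of the canonical factorisation of remark a) together with functoriality of the $(j^*,\lexp p j_*)$-adjunction; if one does not wish to normalise $j^* f=\mathrm{Id}_L$, one twists every identification $j^*(-)\cong L$ by the automorphism $j^* f$, which affects nothing. Everything else is formal manipulation within the torsion theory $(\TC,\FC)$ on $\CC$ and the short exact sequences coming from the quoted triangles.
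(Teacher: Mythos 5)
Your proof is correct and follows the paper's general line---use the adjunction unit $\alpha\colon M\to\lexp p j_*j^*M\simeq\lexp p j_*L$ and show it corestricts to $\lexp{p+}j_{!*}L$---but the key corestriction step is executed differently, and the paper's version is appreciably shorter. The paper observes that $Q=\coker_\FC(\lexp{p+}j_{!*}L\harrow\lexp p j_*L)$ lies in the essential image of $\CC_F$, so $\hom(\lexp p j_{!*}L,Q)=0$; composing with the epimorphism $f$ then kills $M\to Q$ at once, and $\alpha$ factors through $\lexp{p+}j_{!*}L=\ker(\lexp p j_*L\to Q)$. There is no need to establish separately that $\alpha$ is a monomorphism. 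You instead first prove $\alpha$ is mono (via the torsion-theoretic argument $\ker_\CC\alpha\cap P=0$ and $\ker_\CC\alpha\hookrightarrow\coker_\CC f\in\TC$, $\ker_\CC\alpha\in\FC$), then identify $P^+/P$ as the maximal $\varpi$-torsion subobject of $\lexp p j_*L/P$ using the triangles quoted before the lemma, and conclude $\im_\CC\alpha/P\subseteq P^+/P$. Both arguments are sound; yours makes more explicit use of the torsion-theory structure of the $p$ versus $p+$ hearts, which is instructive, while the paper's \emph{Hom}-vanishing argument is the standard ``no morphisms from an intermediate extension to a boundary-supported object'' shortcut and bypasses the monomorphism check entirely. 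The final bookkeeping (that $g$ is a bimorphism via $g\otimes\Km$ and that $\coker_\CC g$ restricts to $0$ over $U$) matches in both.
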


\begin{proof}
Soit $M \longrightarrow \lexp p j_* j^* M$ le morphisme d'adjonction et notons
$Q$ le conoyau dans $\FC$ de $\lexp {p+} j_{!*} j^* M \harrow \lexp p j_* j^* M$.
De la nullitÈ de $\hom(\lexp p j_{!*} L, Q)$, on en dÈduit, via l'Èpimorphisme
$\lexp p j_{!*} L \twoheadrightarrow M$ que $M \longrightarrow Q$ est nulle
ce qui fournit, en utilisant $j^*M \simeq L$, une flËche $M \longrightarrow \lexp {p+} j_{!*} L$
qui factorise $\lexp p j_{!*} L \htarrow_+ \lexp {p+} j_{!*} L$ et donc
$M \htarrow_+ \lexp {p+} j_{!*} L$.
\end{proof}

\rem dualement si $f:M \htarrow \lexp {p+} j_{!*} L$ est telle que $\coker_\CC f$
est dans l'image essentielle de $\CC_F$ alors $f:M \htarrow_+ \lexp {p+} j_{!*} L$
factorise $\lexp p j_{!*} L \htarrow \lexp {p+} j_{!*} L$. En particulier on notera que
$f:M \htarrow_+ \lexp p j_{!*} L$ (resp. $f:\lexp {p+} j_{!*} L \htarrow_+ M$)
est nÈcessairement un isomorphisme.

\begin{prop} \phantomsection \label{prop-fp-libre2}
On suppose que $j_*$ (resp. $j_!$) est $t$-exact pour $\CC$ (resp. $\lexp {+} \CC$) alors
$j_* (\FC_U) \subset \FC$ (resp. $j_! (\FC_U) \subset \FC$).
\end{prop}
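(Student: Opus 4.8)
The plan is to treat the two statements symmetrically by duality, so I will only argue the case of $j_*$; the case of $j_!$ follows by passing to the dual situation, exchanging the roles of $\CC$ and $\lexp{+}\CC$ and of the torsion theories $(\TC_U,\FC_U)$ and $(\FC_U,\TC_U[-1])$ coming from Corollaire \ref{coro-pp}. So assume $j_*$ is $t$-exact for $\CC$, fix $L \in \FC_U$, and set $P := \lexp p j_* L = j_* L$ (the two agree by $t$-exactness, since $L$ lies in the heart $\CC_U$ and $j_* L$ already lies in $\CC$). I must show $P \in \FC = \CC \cap \lexp{+}\CC$, i.e. that $P$ is $\varpi$-libre in $\CC$; equivalently, by the description of $\FC$ via Corollaire \ref{coro-pp}, that $P$ has no nonzero $\varpi$-torsion subobject in $\CC$.

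First I would record what $\varpi$-torsion-freeness means concretely: by the Remarque following Proposition \ref{prop-noetherienne}, $P$ is $\varpi$-libre iff $\varpi \cdot 1_P$ is a monomorphism in $\CC$, equivalently iff the largest $\varpi$-torsion subobject $P_{tor}$ vanishes. Now I use the recollement description of the torsion theory on $\CC$ (the Proposition just before Corollaire \ref{coro-torsion0}): $P \in \FC$ exactly when $\lexp p i^! P \in \FC_F$ and $j^* P \in \FC_U$. The second condition is immediate since $j^* j_* L \simeq L \in \FC_U$ by condition (5) of recollement. So everything reduces to showing $\lexp p i^! j_* L \in \FC_F$. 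The key point is that $t$-exactness of $j_*$ for $\CC$ forces $\lexp p i^! j_* L$ to be concentrated in a single degree: from the distinguished triangle $i_* i^! P \to P \to j_* j^* P \leadsto$ of condition (4), together with $j^* P \simeq L$, one gets $i_* i^! j_* L \simeq i_* i^! P$ fitting in the triangle whose other two terms, $P$ and $j_* L \simeq P$, lie in the heart $\CC$; hence $i^! P$ has cohomology only in degrees $0$ and $-1$ — but in fact the long exact sequence, using that $P \to j_* j^* P$ is an isomorphism after $j^*$ and hence has cone supported on $F$, pins $\lexp p i^! P = \lexp p \hi^0 i^! P$ down. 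I would then invoke the monodromy-type computation already available: the relevant statement is really the one extracted from \cite{juteau} recalled just before the Lemme on $\lexp{p+} j_! L$ etc., namely the triangle $\lexp p j_* \to \lexp{p+} j_* \to \lexp p i_* \hi^1_{tors} i^* j_*[-1] \leadsto$, which shows $j_* L = \lexp p j_* L \in \FC$ precisely when $\hi^1_{tors} i^* j_* L$ contributes trivially — and $t$-exactness of $j_*$ is exactly what kills the offending cohomology degree.

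Concretely, the chain of steps I would carry out is: (i) reduce to showing $\lexp p i^! P \in \FC_F$ via the recollement torsion theory, the $j^*$-condition being trivial; (ii) use the triangle $i_* i^! P \to P \to j_* j^* P \leadsto$ and $t$-exactness of $j_*$ to see that $i^! P \in \DC_F^{\geq 0}$, so that $\lexp p i^! P = \hi^0(i^! P)$ sits as a genuine subobject-type term with no contribution in degree $1$; (iii) feed this into the $\FC_F$-membership criterion from Corollaire \ref{coro-pp} applied on $F$: $\lexp p i^! P \in \FC_F$ iff $\lexp{p+}\hi^0$ of it computes the libre quotient correctly, which holds since there is no degree-$1$ torsion cohomology to obstruct it. The main obstacle I anticipate is step (ii): making precise that $t$-exactness of $j_*$ \emph{for $\CC$} (as opposed to for $\lexp{+}\CC$) is exactly the hypothesis needed to prevent $\hi^1_{tors} i^* j_* L$ — or rather its avatar appearing in $i^! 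P$ — from producing $\varpi$-torsion in $P$; this is a $t$-structure bookkeeping argument where one must be careful not to conflate the $p$ and $p+$ truncations, and the Remarque after Corollaire \ref{coro-pp} (identifying $A_\TC$ and $A_\FC$ with $\lexp+\hi^{-1}$ and $\lexp+\hi^0$) is the tool that makes it rigorous.
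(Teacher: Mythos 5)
Your proposal takes a correct but considerably more circuitous route than the paper, and as written it has a gap that is really a missed trivial observation. You correctly reduce the question, via the recollement description of the torsion theory on $\CC$, to showing $j^* P \in \FC_U$ (immediate, since $j^* j_* L \simeq L$) and $\lexp p i^! P \in \FC_F$ for $P = j_* L$. But you do not notice that $i^! j_* = 0$ is one of the defining recollement axioms (condition (3) in \S\ref{para-recollement}), so $\lexp p i^! P = 0$ and membership in $\FC_F$ is automatic, closing the argument with no further work. Instead your step (ii) establishes only the weaker $i^! P \in \DC_F^{\geq 0}$, and then you switch from $i^!$ to $i^*$: the Juteau triangle $\lexp p j_* \to \lexp {p+} j_* \to \lexp p i_* \hi^1_{tors} i^* j_*[-1] \leadsto$ controls a different quantity. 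That \emph{is} a legitimate alternative route --- showing $\lexp p j_* L = \lexp {p+} j_* L$ would indeed give $j_* L \in \FC$ --- but the crucial claim that $t$-exactness of $j_*$ for $p$ forces $\hi^1_{tors}\, i^* j_* L = 0$ is asserted, not proved; making it rigorous requires feeding the other recollement triangle $j_! L \to j_* L \to i_* i^* j_* L \leadsto$ and the right $t$-exactness of $j_!$ into a degree count, which you do not carry out. The paper's own proof bypasses all of this. It uses the identity $\FC = \DC^{\leq 0} \cap \lexp + \DC^{\geq 0}$ recorded at the start of \S\ref{para-quasi} and checks both inclusions directly: the hypothesis $\lexp p j_* = j_*$ gives $j_* L \in \DC^{\leq 0}$ for $L \in \FC_U \subset \DC_U^{\leq 0}$, while $j_*$, being a right adjoint, is automatically left $t$-exact for the recollement $t$-structure $p+$, so $j_* L \in \lexp + \DC^{\geq 0}$ for $L \in \FC_U \subset \lexp + \DC_U^{\geq 0}$; the $j_!$ statement is dual, using that $j_!$ is automatically right $t$-exact for $p$ together with the hypothesis $\lexp {p+} j_! = j_!$.
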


\begin{proof}
Soit $L \in \FC_U$; comme $\lexp p j_*=j_*$,
$j_* L \in \DC^{\leq 0} \cap \lexp {+} \DC^{\geq 0}=\FC$.
Le cas de $j_!$ est dual: comme
$\lexp {+} j_!=j_!$, alors $j_! L \in \DC^{\leq 0} \cap \lexp {+} \DC^{\geq 0}$ d'o˘ le rÈsultat.
\end{proof}

\subsection{\texorpdfstring{$t$}{Lg}-structures perverses}
\label{para-schema}

Dans la suite $S$ dÈsigne le spectre soit d'un corps, soit
d'un anneau de valuation discrËte hensÈlien $A$, ou celui du normalisÈ
$\bar A$ de $A$ dans une clÙture algÈbrique du corps des fractions de $A$.
On rappelle en outre que $\Lambda$ dÈsigne une des trois lettres $\Km,\Om$ ou $\Fm$ introduites plus haut.
Soit alors $X$ un schÈma de type fini sur $S$. On note $\DC:=D_c^b(X,\Lambda)$
la sous-catÈgorie pleine de $D(X,\Lambda)$ formÈe des complexes ‡ cohomologie bornÈe constructible.

\begin{nota} Dans la suite du texte, $\hi^i$ dÈsignera le foncteur cohomologique associÈe ‡ la $t$-structure naturelle
sur la catÈgorie dÈrivÈe considÈrÈe.
\end{nota}

(a) \textbf{Cas o˘ $S$ est le spectre d'un corps}: on considÈrera dans cette situation
la $t$-structure perverse $p$ dÈfinie par:
$$\begin{array}{l}
A \in \lexp p D^{\leq 0}(X,\Lambda)
\Leftrightarrow \forall x \in X,~\hi^k i_x^* A=0,~\forall k >- \dim \overline{\{ x \} } \\
A \in \lexp p D^{\geq 0}(X,\Lambda) \Leftrightarrow \forall x \in X,~\hi^k i_x^! A=0,~\forall k <- \dim \overline{\{ x \} }
\end{array}$$
o˘ $i_x:\spec \kappa(x) \hookrightarrow X$. On note alors $\lexp p \FP(X,\Lambda)$ le c{\oe}ur de cette $t$-structure: c'est une catÈgorie abÈlienne noethÈrienne et $\Lambda$-linÈaire.

\rem un objet $A$ de $D_c^b(X,\Om)$ est dans $\lexp p D^{\leq 0}(X,\Om)$ 
si et seulement si $A \otimes_\Om^\Lm \Fm$ est un objet de $\lexp p D^{\leq 0}(X,\Fm)$.

\begin{nota} Les foncteurs cohomologiques associÈs ‡ la $t$-structure perverse ci-avant seront notÈs $\lexp p \hi^i$.
\end{nota}

Dans le cas o˘ $\Lambda=\Om$,
en tant que catÈgorie abÈlienne $\Om$-linÈaire, on obtient comme prÈcÈdemment une autre $t$-structure $p+$
$$\begin{array}{l}
A \in \lexp {p+} D^{\leq 0}(X,\Lambda) \Leftrightarrow \forall x \in X,
\left \{ \begin{array}{ll} \hi^i i_x^* A=0, & \forall i >- \dim \overline{\{ x \} } +1 \\
\hi^{-\dim \overline{\{ x \} } +1} i_x^* A & \hbox{de torsion} \end{array} \right. \\
A \in \lexp {p+} D^{\geq 0}(X,\Lambda) \Leftrightarrow \forall x \in X,
\left \{ \begin{array}{ll} \hi^i i_x^! A=0, & \forall i <- \dim \overline{\{ x \} } \\
\hi^{-\dim \overline{\{ x \} }} i_x^! A & \hbox{libre} \end{array} \right.
\end{array}$$
dont on notera $\lexp {p+} \FP(X,\Om)$ le c{\oe}ur et $\lexp {p+} \hi^i$ les foncteurs cohomologiques.
C'est une catÈgorie abÈlienne artinienne et $\Om$-linÈaire.

\rem $A$ est un objet de $\lexp {p+} D^{\geq 0}$
si et seulement si $A \otimes_\Om^\Lm \Fm$ est un objet de $\lexp {p} D^{\geq 0}(X,\Fm)$.

\begin{nota}  \phantomsection \label{nota-fpl}
Nous noterons $\FPL(X,\Om):=\lexp {p+} \FP(X,\Om) \cap \lexp p \FP(X,\Om)$ la catÈgorie 
quasi-abÈlienne des faisceaux pervers \og libres \fg.
\end{nota}

\rem un objet $A$ de $\lexp p \FP(X,\Om)$ est libre si et seulement si 
$A \otimes_\Om^\Lm \Om/(\varpi)$ est pervers.

\rem la dualitÈ de Grothendieck Èchange les deux $t$-structures perverses $p$ et $p+$ 
de sorte que si $f$ est un foncteur exact ‡ droite pour la $t$-structure $p$ et commute 
‡ la dualitÈ de Grothendieck alors il prÈserve la catÈgorie quasi-abÈlienne
des faisceaux pervers libres.

\begin{prop} \phantomsection \label{prop-perversites}
Pour $j:U \hookrightarrow X$ affine et $P \in \FPL(U,\Om)$ on a
$$j_* P =\lexp p j_* P =\lexp {p+} j_* P, \quad j_! P=\lexp p j_! P =\lexp {p+} j_! P,$$
et donc $j_* P$ et $j_! P$ sont des objets de $\FPL(X,\Om)$.
\end{prop}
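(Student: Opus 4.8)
The plan is to reduce everything to Artin's affine vanishing theorem and then propagate it through the formal properties of the two perverse $t$-structures $p$ and $p+$.

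First I would fix the complementary reduced closed immersion $i:F\hookrightarrow X$, so that $(j^*,i_*)$ puts us in the recollement situation of \S\ref{para-recollement} for $\DC_U=D^b_c(U,\Om)$, $\DC_F=D^b_c(F,\Om)$ and $\DC=D^b_c(X,\Om)$, the $t$-structures $p$ and $p+$ on $X$ being obtained by gluing from those on $U$ and $F$. Since $j$ is affine, Artin's vanishing theorem (cf. \cite{ast} 4.1.1, valid over $\Om$) tells us that $j_!$ and $j_*$ are $t$-exact for $p$; for the argument I only need that $j_*$ is right $t$-exact and $j_!$ is left $t$-exact for $p$, the reverse exactnesses being automatic because $j^*$ is exact and $j_!$, $j_*$ are respectively left and right adjoint to it.

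Next I would lift these exactness statements to the $t$-structure $p+$. As $j^*$ is again $t$-exact for $p+$, its adjoints $j_*$ and $j_!$ are automatically left, resp. right, $t$-exact for $p+$; for the other halves I would use Grothendieck--Verdier duality, which satisfies $\Dm_X j_!\simeq j_*\Dm_U$ and $\Dm_X j_*\simeq j_!\Dm_U$ and interchanges $p$ with $p+$ (remarks following Notation~\ref{nota-fpl}): applying $\Dm$ to ``$j_*$ right $t$-exact for $p$'' yields ``$j_!$ left $t$-exact for $p+$'', and applying it to ``$j_!$ left $t$-exact for $p$'' yields ``$j_*$ right $t$-exact for $p+$''. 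Hence $j_*$ and $j_!$ are $t$-exact for both $p$ and $p+$ over $\Om$. A duality-free variant is also available: the projection formula for the open immersion $j$ and the perfect $\Om$-complex $\Fm=\Om/(\varpi)$ gives $j_*P\otimes_\Om^\Lm \Fm\simeq j_*(P\otimes_\Om^\Lm \Fm)$ and the analogue for $j_!$, after which one invokes Artin over $\Fm$ together with the reduction-mod-$\varpi$ criteria of \S\ref{para-schema}.

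Finally I would conclude. For $P\in\FPL(U,\Om)=\lexp p\FP(U,\Om)\cap\lexp {p+}\FP(U,\Om)$, applying $j_*$, which is $t$-exact for both $p$ and $p+$, places $j_*P$ simultaneously in the two hearts $\lexp p\FP(X,\Om)$ and $\lexp {p+}\FP(X,\Om)$, hence in $\FPL(X,\Om)$; being already in each heart, $j_*P$ agrees with its own degree-$0$ perverse cohomology for the corresponding $t$-structure, that is $j_*P=\lexp p j_*P=\lexp {p+}j_*P$ in the sense of \ref{nota-pT}, and the same reasoning gives $j_!P=\lexp p j_!P=\lexp {p+}j_!P\in\FPL(X,\Om)$. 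Equivalently, the inclusion $j_*(\FPL(U,\Om))\subset\FPL(X,\Om)$ is exactly Proposition~\ref{prop-fp-libre2} for $j_*$, whose hypothesis that $j_*$ be $t$-exact for $\CC=\lexp p\FP(X,\Om)$ is precisely the content of Artin's theorem. The only non-formal ingredient is Artin's vanishing theorem; once $t$-exactness for $p$ over $\Om$ is granted, the rest is bookkeeping with the two $t$-structures and the duality functor, so I do not expect any real obstacle.
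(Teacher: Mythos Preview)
Your argument is correct and follows the same route as the paper: Artin's affine vanishing for the $t$-exactness input, then Proposition~\ref{prop-fp-libre2} for the conclusion. The only difference is one of economy: the paper uses merely that $j_*$ is $t$-exact for $p$ and $j_!$ is $t$-exact for $p+$, which is exactly the hypothesis of \ref{prop-fp-libre2} (the missing halves $j_*L\in\lexp{+}\DC^{\geq 0}$ and $j_!L\in\DC^{\leq 0}$ being automatic from adjunction), whereas you establish $t$-exactness of both functors for both $t$-structures before concluding---a harmless redundancy.
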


\begin{proof}
Comme $j$ est affine, $j_*$ (resp. $j_!$) est $t$-exact pour $p$ (resp. $p+$)
et le rÈsultat dÈcoule de la proposition \ref{prop-fp-libre2}.
\end{proof}

(b) \textbf{Cas o˘ $S=\spec A$}: on note
\begin{itemize}
\item $f:X \rightarrow S$ le morphisme structural;

\item $s=\spec k$ (resp. $\eta=\spec K$) le point fermÈ (resp. gÈnÈrique) de $S$;

\item $X_s$ (resp. $X_\eta$) la fibre spÈciale (resp. gÈnÈrique) de $X$;

\item $i:X_s \hookrightarrow X$ (resp. $j:X_\eta \hookrightarrow X$) l'inclusion fermÈe (resp. ouverte).
\end{itemize}
Le complexe $f^! \Lambda_S[2](1)$ est dualisant sur $X$ d'aprËs \cite{Del-finitude} Th. finitude \S 4. On considËre alors la
$t$-structure sur $X$ obtenue par recollement de la perversitÈ autoduale $p$ sur la fibre spÈciale $X_s$ de $X$
et de la $t$-structure $(\lexp p D^{\leq -1}(X_\eta,\Lambda),\lexp p D^{\geq -1}(X_\eta,\Lambda))$, notÈe $p[1]$,
o˘ $p$ est la perversitÈ autoduale sur la fibre gÈnÈrique $X_\eta$ de $X$. Autrement dit, en posant,
d'aprËs \cite{sga43} XIV 2.2, pour $x$ un point de $X$ d'image $y$ dans $S$,
$\delta(x)=\deg \tr \kappa(x)/\kappa(y) + \dim \overline{\{ y \} }$,
on a
$$\begin{array}{l}
A \in \lexp p D^{\leq 0}(X,\Lambda) \Leftrightarrow \forall x \in X,~\hi^q(i_x^* A)=0, \forall q > - \delta(x) \\
A \in \lexp p D^{\geq 0}(X,\Lambda) \Leftrightarrow \forall x \in X,~\hi^q(i_x^! A)=0, \forall q < - \delta(x)
\end{array}$$
On dÈfinit de mÍme la $t$-structure $p+$ sur $X$ de sorte que le foncteur dualisant $D_X=R \hom ( -,K_X)$
Èchange $\lexp p D^{\leq 0}(X,\Lambda)$ (resp. $\lexp {p+} D^{\leq 0}(X,\Lambda)$) et
$\lexp {p+} D^{\geq 0}(X,\Lambda)$ (resp. $\lexp p D^{\geq 0}(X,\Lambda)$).

\begin{prop}
Le foncteur $j_*$ (resp. $j_!$) est $t$-exact pour $X_\eta$
muni de la $t$-structure $p[1]$ (resp. $p+[1]$) et $X$ de la $t$-structure $p$ (resp. $p+$) dÈfinie ci-dessus.
\end{prop}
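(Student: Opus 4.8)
The plan is to prove the statement for $j_*$ and then obtain the one for $j_!$ by Grothendieck--Verdier duality. Indeed $D_X$ exchanges the $t$-structures $p$ and $p+$ on $X$ (this is how $p+$ on $X$ was defined), $D_{X_\eta}$ exchanges $p[1]$ with the dual perversity on $X_\eta$, which is the one denoted $p+[1]$ in the statement, and $D_X\circ j_!\simeq j_*\circ D_{X_\eta}$; conjugating by these dualities, $t$-exactness of $j_*$ for $(p[1],p)$ is equivalent to $t$-exactness of $j_!$ for $(p+[1],p+)$. So it suffices to treat $j_*$.

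Recall that the $p$-$t$-structure on $X$ is the one obtained by recollement from the autodual perversity $p$ on $X_s$ along $i$ and from $p[1]$ on $X_\eta$ along $j$; hence $j_*A$ lies in $\lexp p D^{\leq 0}(X)$ (resp. in $\lexp p D^{\geq 0}(X)$) precisely when $j^*j_*A\in\lexp{p[1]}D^{\leq 0}(X_\eta)$ and $i^*j_*A\in\lexp p D^{\leq 0}(X_s)$ (resp. when $j^*j_*A\in\lexp{p[1]}D^{\geq 0}(X_\eta)$ and $i^!j_*A\in\lexp p D^{\geq 0}(X_s)$). The $t$-exactness on the left is then purely formal: for $A\in\lexp{p[1]}D^{\geq 0}(X_\eta)$ one has $j^*j_*A\simeq A$ by recollement axiom $(5)$ and $i^!j_*A=0$ by axiom $(3)$, so both conditions hold and $j_*A\in\lexp p D^{\geq 0}(X)$; nothing about the base or about $A$ beyond the hypothesis intervenes.

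The real content is the $t$-exactness on the right. For $A\in\lexp{p[1]}D^{\leq 0}(X_\eta)$ the condition $j^*j_*A\simeq A\in\lexp{p[1]}D^{\leq 0}(X_\eta)$ is automatic, and everything reduces to showing $i^*j_*A=i^*Rj_*A\in\lexp p D^{\leq 0}(X_s)$. Spelling out the two $\delta$-perversities, this is the cohomological-amplitude estimate: $\hi^q(i_x^*Rj_*A)=0$ for every $x\in X_s$ and every $q>-\delta(x)=-\deg\tr\kappa(x)/\kappa(s)$, granted that $\hi^k(i_y^*A)=0$ for every $y\in X_\eta$ and every $k>-\delta(y)$ with $\delta(y)=\deg\tr\kappa(y)/K+1$. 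I would deduce it from the computation $(Rj_*A)_{\bar x}\simeq R\Gamma\big((X_{(\bar x)})_\eta,A\big)$ over the strict henselization at a geometric point $\bar x$ of $X_s$: the required vanishing becomes an Artin--Grothendieck-type bound on the étale cohomological dimension of the local ``Milnor tubes'' $(X_{(\bar x)})_\eta$ over the henselian trait $S$, and the function $\delta$ of \cite{sga43} XIV is normalized precisely so that this bound is exactly the one needed. Equivalently, after base change to a strictly henselian trait with algebraically closed fraction field (which one is entitled to do, the $t$-structure conditions being tested pointwise and $Rj_*$ along the generic-fibre immersion being compatible with this integral base change), $i^*Rj_*$ becomes the nearby-cycles functor $R\Psi_f$ and the estimate is one half of Gabber's theorem on the perversity of the vanishing-cycles complex.

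The main obstacle is thus concentrated in that last amplitude bound for $i^*Rj_*$; I would invoke Gabber's theorem, or the finiteness theorems of \cite{Del-finitude} together with the cohomological-dimension estimates of \cite{sga43}, rather than reprove it. Granting it, the recollement description gives $j_*A\in\lexp p D^{\leq 0}(X)$, so $j_*$ is $t$-exact for $(p[1],p)$, and the duality argument of the first paragraph then yields the $t$-exactness of $j_!$ for $(p+[1],p+)$.
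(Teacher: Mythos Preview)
Your proposal is correct and follows essentially the same route as the paper, which simply refers to \cite{ill} (bottom of page 48) for the $t$-exactness of $j_*$ and then invokes duality for $j_!$. You have merely unpacked what that reference contains: left $t$-exactness is formal from recollement, right $t$-exactness reduces to the amplitude bound for $i^*Rj_*$ (the Artin--Gabber estimate underlying the perversity of nearby cycles), and the duality argument $D_X j_!\simeq j_* D_{X_\eta}$ exchanging $(p[1],p)$ with $(p+[1],p+)$ is exactly the paper's ``le cas de $j_!$ est dual''.
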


\begin{proof}
Pour $j_*$, cf. par exemple \cite{ill} bas de la page 48; le cas de $j_!$ est dual.
\end{proof}

On note alors $\lexp {p[1]} \FPL(X_\eta,\Om):=\lexp p D^{\leq -1}(X_\eta,\Om) \cap 
\lexp {p+} 
D^{\geq -1}(X_\eta,\Om)$ la catÈgorie quasi-abÈlienne des faisceaux pervers \og libres \fg.

\begin{coro} (cf. la proposition \ref{prop-fp-libre2})
Si $L_\Om \in \lexp {p[1]} \FPL(X_\eta,\Om)$ alors 
$j_! L_\Om$, $j_* L_\Om$, $\lexp p j_{!*} L_\Om$ et $\lexp {p+} j_{!*} L_\Om$ appartiennent ‡
$\FPL(X,\Om)$, i.e. sont aussi \og libres\fg.
\end{coro}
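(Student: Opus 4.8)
The plan is to read the statement through the formalism of \S\ref{para-quasi}, applied to the recollement attached to the closed immersion $i:X_s\hookrightarrow X$ and the open immersion $j:X_\eta\hookrightarrow X$. On $\DC=D_c^b(X,\Om)$ one uses the perverse $t$-structure $p$, so that $\CC=\lexp p\FP(X,\Om)$, $\lexp {+}\CC=\lexp {p+}\FP(X,\Om)$ and the quasi-abelian category $\FC=\CC\cap\lexp {+}\CC$ of free objects is exactly $\FPL(X,\Om)$; on $\DC_U=D_c^b(X_\eta,\Om)$ one uses the shifted $t$-structure $p[1]$, whose associated $p+$-version is $p+[1]$, so that its category $\FC_U$ of free objects is $\lexp {p[1]}\FPL(X_\eta,\Om)$, the category to which $L_\Om$ belongs by hypothesis. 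In this language the corollary asserts that $j_!L_\Om$, $j_*L_\Om$, $\lexp p j_{!*}L_\Om$ and $\lexp {p+}j_{!*}L_\Om$ all lie in $\FC$, and it suffices to verify the four memberships.

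For $j_*L_\Om$ and $j_!L_\Om$ I would combine the proposition immediately preceding the corollary with Proposition \ref{prop-fp-libre2}: the former says that $j_*$ is $t$-exact from $(X_\eta,p[1])$ to $(X,p)$, so that $\lexp p j_*L_\Om=j_*L_\Om$ and Proposition \ref{prop-fp-libre2} applied to $\CC$ gives $j_*(\FC_U)\subset\FC$; dually $j_!$ is $t$-exact from $(X_\eta,p+[1])$ to $(X,p+)$, so that $\lexp {p+}j_!L_\Om=j_!L_\Om$ and Proposition \ref{prop-fp-libre2} applied to $\lexp {+}\CC$ gives $j_!(\FC_U)\subset\FC$. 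Since $L_\Om\in\FC_U$, this yields $j_*L_\Om,j_!L_\Om\in\FPL(X,\Om)$.

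For $\lexp p j_{!*}L_\Om$ and $\lexp {p+}j_{!*}L_\Om$ the quickest route is to quote the lemma of \S\ref{para-quasi} placed just before Notation \ref{nota-htarrowF}, which states precisely that $\lexp {p+}j_!$, $\lexp p j_{!*}$, $\lexp {p+}j_{!*}$ and $\lexp p j_*$ all carry $\FC_U$ into $\FC$; alternatively, $\lexp p j_{!*}L_\Om\in\FC$ follows from Corollary \ref{coro-torsion0} (which gives $\lexp p j_{!*}(\TC_U)\subset\TC$ and $\lexp p j_{!*}(\FC_U)\subset\FC$, hence perversity for both $p$ and $p+$), and $\lexp {p+}j_{!*}L_\Om\in\FC$ follows from the distinguished triangle $\lexp p j_{!*}\rightarrow\lexp {p+}j_{!*}\rightarrow\lexp p i_*\hi^0_{tors}i^*j_*\leadsto$ recalled in \S\ref{para-quasi}: evaluated on $L_\Om$ its outer terms are objects of $\CC$, hence so is the middle term ($\CC$ being stable under extensions in $\DC$), while $\lexp {p+}j_{!*}L_\Om$ lies in $\lexp {+}\CC$ by construction, so it lies in $\CC\cap\lexp {+}\CC=\FC$. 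Either way, all four memberships hold and the proof is complete. There is no genuine obstacle: the corollary is a purely formal consequence of the preceding proposition together with the results of \S\ref{para-quasi}; the one point that must be handled with care is simply that the $t$-exactness hypotheses of Proposition \ref{prop-fp-libre2} (and of the quoted lemma) are indeed satisfied here, which is exactly the content of the proposition preceding the corollary once the degree shift $p[1]$ on the generic fibre is correctly lined up with the perversities $p$ and $p+$ on $X$.
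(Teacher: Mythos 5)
Your proof is correct and follows exactly the route the paper intends: the paper states the corollary with the bare citation ``(cf. la proposition \ref{prop-fp-libre2})'', and that citation means precisely what you spelled out — combine the $t$-exactness of $j_*$ (for $p$) and of $j_!$ (for $p+$) from the preceding proposition with Proposition \ref{prop-fp-libre2} to get $j_*L_\Om,\,j_!L_\Om\in\FPL(X,\Om)$, and then invoke the lemma of \S\ref{para-quasi} (before Notation \ref{nota-htarrowF}) or Corollary \ref{coro-torsion0} for the two intermediate extensions. The only difference is that you elaborate what the paper leaves as a cross-reference; the underlying argument is the same.
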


(c) \textbf{Cas o˘ $S=\spec \bar A$}: on note
\begin{itemize}
\item $\bar s=\spec \bar k$ (resp. $\bar \eta=\spec K$) le point fermÈ (resp. gÈnÈrique) de $S$;

\item $X_{\bar s}$ (resp. $X_{\bar \eta}$) la fibre spÈciale (resp. gÈnÈrique) de $X$;

\item $\bar i:X_{\bar s} \hookrightarrow X$ (resp. $\bar j:X_{\bar \eta} \hookrightarrow X$) l'inclusion fermÈe (resp. ouverte).
\end{itemize}

\rem dans cette situation, il n'y a plus de complexe dualisant.

On considËre alors les $t$-structures suivantes:
\begin{itemize}
\item $p(1)$ en recollant
$(\lexp p D^{\leq -1}(X_{\bar \eta},\Lambda),\lexp p D^{\geq -1}(X_{\bar \eta},\Lambda))$ et
$(\lexp p D^{\leq 0}(X_{\bar s},\Lambda),\lexp p D^{\geq 0}(X_{\bar s},\Lambda))$;

\item $p(0)$ en recollant
$(\lexp p D^{\leq 0}(X_{\bar \eta},\Lambda),\lexp p D^{\geq 0}(X_{\bar \eta},\Lambda))$ et
$(\lexp p D^{\leq 0}(X_{\bar s},\Lambda),\lexp p D^{\geq 0}(X_{\bar s},\Lambda))$.
\end{itemize}

\rem on notera $p(1)+$ et $p(0)+$ les $t$-structures obtenues en recollant comme ci-dessus ‡ partir
des versions $p+$ sur $X_{\bar \eta}$ et $X_{\bar s}$.

\begin{prop} \phantomsection \label{prop-cycles0}
\begin{itemize}
\item[(i)] Pour $X_{\bar \eta}$ et $X$ munis des $t$-structures $p[1]$ et $p(1)$
(resp. $p[1]+$ et $p(1)+$), les foncteurs $\bar j_*$ et $\bar j_!$ sont $t$-exacts.
Pour $L_\Om \in \lexp {p[1]} \FPL(X_{\bar \eta},\Om)$, on a
$$\bar j_* L_\Om=\lexp {p(1)} {\bar j}_{!*} L_\Om=\lexp {p(1)+} {\bar j}_{!*} L_\Om \in 
\lexp {p(1)} \FPL(X,\Om)$$
et on a la suite exacte courte dans $\lexp {p(1)} \FPL(X,\Om)$:
$$0 \rightarrow \bar i_* \bar i^* \bar j_* L_\Om \longrightarrow \bar j_! L_\Om \longrightarrow \bar j_* L_\Om \rightarrow 0.$$

\item[(ii)] Pour $X_{\bar \eta}$ et $X$ munis des $t$-structures $p$ et $p(0)$
(resp. $p+$ et $p(0)+$), les foncteurs $\bar j_*$ et $\bar j_!$ sont $t$-exacts.
Pour $L_\Om \in \lexp {p} \FPL(X_{\bar \eta},\Om)$, on a
$$\bar j_! L_\Om=\lexp {p(0)} {\bar j}_{!*} L_\Om=\lexp {p(0)+} {\bar j}_{!*} L_\Om
\in  \lexp {p(0)} \FPL(X,\Om)$$
et on a la suite exacte courte dans $\lexp {p(0)} \FPL(X,\Om)$:
$$0 \rightarrow \bar j_! L_\Om \longrightarrow j_* L_\Om \longrightarrow 
\bar i_*\bar i^* \bar j_* L_\Om  \rightarrow 0.$$
\end{itemize}
\end{prop}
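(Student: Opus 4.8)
The statement to prove is Proposition \ref{prop-cycles0}, which has two parts, each asserting: (1) $t$-exactness of $\bar j_*$ and $\bar j_!$ for the relevant recollement $t$-structures, (2) that $\bar j_*$ (resp. $\bar j_!$) computes the intermediate extension and lands in the quasi-abelian category of free perverse sheaves, and (3) a short exact sequence describing the defect between $\bar j_!$ and $\bar j_*$ (resp. $j_*$ and $\bar j_!$). Since part (ii) is obtained from part (i) by Grothendieck–Verdier duality — which exchanges $p$ and $p+$, exchanges $\bar j_!$ and $\bar j_*$, exchanges $p(1)$ and $p(0)$ appropriately — I would prove (i) carefully and then deduce (ii) by applying $D_X$, noting that over $\bar A$ there is no dualizing complex but one can still work fiberwise or pull back from the $A$-situation where the dualizing complex exists.

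**Main line of argument for (i).** The key point is that $\bar j : X_{\bar\eta}\hookrightarrow X$ is the pullback of $j : X_\eta \hookrightarrow X$ along $\spec\bar A \to \spec A$, and over $\bar A$ the generic point $\bar\eta = \spec K$ is the \emph{same} as $\eta=\spec K$; the special fibre is just the geometric special fibre. So $t$-exactness of $\bar j_*$ and $\bar j_!$ follows from the corresponding statement over $A$ (the Proposition just before, citing \cite{ill}), because the perversities $p(1)$, $p[1]$ over $\bar A$ are defined by exactly the same $\delta$-shifted pointwise conditions, and $\bar j_* = $ base change of $j_*$. Once $t$-exactness is known, for $L_\Om$ free the object $\bar j_* L_\Om$ is perverse for both $p(1)$ and $p(1)+$, hence free, i.e. in $\lexp{p(1)}\FPL(X,\Om)$; this is exactly the mechanism of Proposition \ref{prop-fp-libre2} and its corollary. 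The identification $\bar j_* L_\Om = \lexp{p(1)}\bar j_{!*}L_\Om$ comes from the fact that $\bar i^* \bar j_* L_\Om$, while possibly nonzero, sits in the correct degree: $t$-exactness of $\bar j_*$ for $p(1)$ forces $\bar j_* L_\Om \in \lexp{p(1)}D^{\leq 0}$, and since $\bar j^* \bar j_* L_\Om = L_\Om$ with no nonzero subobject supported on $X_{\bar s}$ (because $\hom(i_*(-), j_*(-))=0$ from recollement axiom (3)), $\bar j_* L_\Om$ has no nonzero perverse quotient supported on $X_{\bar s}$ either by the degree bound — so it equals the image of $\bar j_! L_\Om \to \bar j_* L_\Om$, which is $\lexp{p(1)}\bar j_{!*}L_\Om$. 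The equality with $\lexp{p(1)+}\bar j_{!*}L_\Om$ follows because both are free and the two intermediate extensions agree on free objects (this uses the triangles relating $\lexp p j_{!*}$ and $\lexp{p+}j_{!*}$ displayed in \S\ref{para-quasi}, whose defect terms are torsion and hence vanish here).

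**The short exact sequence.** For the distinguished triangle $\bar j_! L_\Om \to \bar j_* L_\Om \to \bar i_* \bar i^* (\mathrm{cone}) \leadsto$, one uses the standard triangle $\bar i_*\bar i^! \to \Id \to \bar j_*\bar j^* \leadsto$ applied to $\bar j_! L_\Om$, or more directly the triangle $\bar j_! \bar j^* \to \Id \to \bar i_* \bar i^* \leadsto$ applied to $\bar j_* L_\Om$: this gives $\bar j_! L_\Om \to \bar j_* L_\Om \to \bar i_* \bar i^* \bar j_* L_\Om \leadsto$. The content is that this triangle is a \emph{short exact sequence} in $\lexp{p(1)}\FPL(X,\Om)$, i.e. that $\bar i^* \bar j_* L_\Om$ is concentrated in perverse degree $0$ \emph{and} that $\bar j_! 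L_\Om \to \bar j_* L_\Om$ is a monomorphism with the expected cokernel. The degree concentration follows from $t$-exactness of $\bar j_*$ together with $t$-exactness (or the known bounds) of $\bar i^*$ against the perversity $p(1)$ on $X$ versus $p$ on $X_{\bar s}$ — here the shift by $1$ between generic and special strata in $p(1)$ is exactly what makes $\bar i^*\bar j_*$ land in perverse degree $0$ on the special fibre; this is the vanishing-cycles normalization and is why $p(1)$ rather than $p(0)$ appears. Injectivity of $\bar j_!L_\Om \to \bar j_* L_\Om$ on cohomology then comes from the long exact sequence and the vanishing of $\lexp p\hi^{-1}$ of the cone in this range.

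**Where the difficulty lies.** The routine parts are the $t$-exactness (inherited by base change from the $A$-case) and the formal manipulation of recollement triangles. The real work is the bookkeeping of \emph{which} perverse degree $\bar i^* \bar j_* L_\Om$ occupies, i.e. verifying that the defining inequalities of $p(1)$ on $X$ and $p$ on $X_{\bar s}$, combined with the $\delta$-function convention, pin the cone of $\bar j_! \to \bar j_*$ to a single perverse degree so that the triangle becomes a short exact sequence — and separately, checking freeness of all four objects $\bar j_! L_\Om$, $j_* L_\Om$, $\lexp{p(1)}\bar j_{!*}L_\Om$, $\lexp{p(1)+}\bar j_{!*}L_\Om$ simultaneously, for which the cleanest route is to reduce mod $\varpi$ using the remark that an object of $\lexp p\FP(X,\Om)$ is free iff its derived reduction is perverse, and then invoke $t$-exactness of $\bar j_*$ over the residue field $\Fm$. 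I expect the degree-counting for the short exact sequence to be the main obstacle, and I would handle it by spelling out $i_x^*$ and $i_x^!$ at points $x \in X_{\bar s}$ explicitly via the $\delta$-inequalities rather than by any abstract argument.
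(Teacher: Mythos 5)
There are two genuine gaps in your proposal; both concern the structure of the argument, not just details.

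\textbf{The duality reduction of (ii) to (i) does not work.} The paper explicitly records the obstruction just before this proposition: \og dans cette situation, il n'y a plus de complexe dualisant\fg{}, so $D_X$ is simply not available over $\bar A$. But the problem is worse than the lack of a dualizing complex, because (ii) is not formally dual to (i) in any case. On $X_{\bar\eta}$ part (i) uses the shifted $t$-structure $p[1]$, while (ii) uses the unshifted $p$, and Verdier duality on the generic fibre sends $p[1]$-perverse objects to $p+[-1]$-perverse ones, not to $p$-perverse ones. So even formally, duality does not exchange $p(1)$ with $p(0)$. The paper instead treats (ii) by rerunning the argument of (i) verbatim, with the only change being that for $K=\bar j_* L_\Om$ the complex $\bar i^*K$ is now concentrated in perverse degree $0$ rather than $-1$, which flips the position of the $\bar i_*\bar i^*$-term in the short exact sequence.

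\textbf{The $t$-exactness argument by base change is not what the paper does and is not straightforward.} You reduce $t$-exactness of $\bar j_*$ to the earlier proposition over $A$ (the one citing \cite{ill}). But not every object of $\lexp{p[1]}D^{\leq 0}(X_{\bar\eta})$ descends to $X_\eta$, so the base-change identity $g^*j_*=\bar j_* g_{\bar\eta}^*$ only covers a special class of inputs; a limit argument over finite extensions would be needed to close the gap. The paper's route is different and self-contained: it invokes the $t$-exactness of the nearby cycles functor $\bar i^*\bar j_*$ (relative to $p$ on $X_{\bar\eta}$ and $p$ on $X_{\bar s}$, from \cite{ill} \S4), which combined with the left $t$-exactness of $\bar j_*$ directly gives $t$-exactness of $\bar j_*$ for $p[1]/p(1)$. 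Then $t$-exactness of $\bar j_!$ is deduced by applying the triangle $\bar j_!\bar j^*K\to K\to \bar i_*\bar i^*K\leadsto$ to $K=\bar j_* L_\Om$ and using the vanishing $\lexp p\hi^r\bar i^*\bar j_*L_\Om=0$ for $r\neq -1$; this simultaneously produces the short exact sequence and the identification $\bar j_*L_\Om=\lexp{p(1)}\bar j_{!*}L_\Om$. You do not produce a separate argument for $\bar j_!$. Finally, your treatment of the freeness assertions (the $p+$ case) is too brief: the paper establishes $t$-exactness of $\bar j_*$ for $p[1]+/p(1)+$ by applying the $t$-exact $\bar j_*$ to the torsion/free triangle $\lexp{p[1]}\hi^0 L_\Om\to L_\Om\to \lexp{p[1]}\tau_{\geq 1}L_\Om\leadsto$ and invoking Corollary \ref{coro-torsion0} to see that the two pieces are respectively free and torsion; this torsion-theory step, which is what actually yields $\bar j_*L_\Om\in\FPL$, is absent from your proposal.
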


\rem le complexe $\bar i^* \bar j_* L_\Om$ est le complexe des cycles proches notÈs $\Psi_{\eta}(L_\Om)$
que l'on considËre muni de son action du groupe de Galois.

\begin{proof}
(i) Le foncteur $\bar j_*$ (resp. $\bar j_!$) est $t$-exact ‡ gauche (resp. ‡ droite) pour $p$  et $p+$.
Par ailleurs comme d'aprËs
\cite{ill} \S 4, $\bar i^* \bar j_*$ est $t$-exact relativement ‡ $p$ sur $X_{\bar \eta}$ et $p$
sur $X_{\bar s}$, on en dÈduit que $\bar j_*$ est $t$-exact relativement ‡ $p[1]$ sur $X_{\bar \eta}$
et $p(1)$ sur $X$. En ce qui concerne $\bar j_!$, considÈrons le triangle distinguÈ
$\bar j_! \bar j^* K \longrightarrow K \longrightarrow \bar i_* \bar i^* K \leadsto$
pour $K=\bar j_* L_\Om$ avec $L_\Om$ qui est $p[1]$ pervers.
La suite exacte longue de $p(1)$-cohomologie et le fait que $\lexp p \hi^r \bar i^* \bar j_* L_\Om$ est nul pour
tout $r \neq -1$, donnent la nullitÈ des $\lexp {p(1)} \hi^r \bar j_! \bar j^* K$ pour tout $r \neq 0$ ainsi que
la suite exacte courte
$$0 \rightarrow \bar i_* \lexp {p(1)} \hi^{-1} \bar i^* K \longrightarrow \bar j_! \bar j^* K \longrightarrow K \rightarrow 0$$
avec $\lexp {p(1)} \hi^{-1} \bar i^* K=\Psi_{\eta}(L_\Om)$ qui est $p$-pervers.
On en dÈduit alors que $\bar j_!$ est $t$-exact relativement ‡ $p[1]$ et $p(1)$ ainsi que
$\bar j_* L_\Om=\lexp {p(1)} {\bar j}_{!*} L_\Om$.

Soit alors $L_\Om$ qui est $p[1]+$ pervers et soit
$\lexp {p[1]} \hi^0 L_\Om \rightarrow L_\Om \rightarrow \lexp {p[1]} \tau_{\geq 1} L_\Om \leadsto$
le triangle distinguÈ associÈ o˘ $\lexp {p[1]} \tau_{\geq 1} L_\Om [1]$ est $p[1]$ pervers de torsion et
$\lexp {p[1]} \hi^0 L_\Om$ est libre. AprËs application du foncteur exact $\bar j_*$ on obtient 
$\bar j_* \lexp {p[1]} \hi^0 L_\Om \rightarrow \bar j_* L_\Om \rightarrow \bar j_* \lexp {p[1]} \tau_{\geq 1} L_\Om \leadsto$
o˘ $\bar j_* \lexp {p[1]} \hi^0 L_\Om$ et $\bar j_* \lexp {p[1]} \tau_{\geq 1} L_\Om[1]$ sont $p(1)$-pervers.
Par ailleurs comme pour tout $M$ qui est $p[1]$-pervers, $\bar j_* M = \lexp {p(1)} {\bar j_{!*}} M$, il dÈcoule
du corollaire \ref{coro-torsion0} que $\bar j_* \lexp {p[1]} \hi^0 L_\Om$ (resp. $\bar j_* \lexp {p[1]} \tau_{\geq 1} L_\Om[1]$)
est libre (resp. de torsion). On en dÈduit
alors que $\bar j_*$ est $t$-exact ‡ gauche relativement ‡ $p[1]+$ et $p(1)+$ et donc $t$-exact. La $t$-exactitude de $\bar j_!$
relativement ‡ $p[1]+$ et $p(1)+$ s'en dÈduit alors comme prÈcÈdemment.

\rem pour tout $M$ qui est $p[1]+$ pervers, $\bar j_* M=\lexp {p(1)+} {\bar j}_{!*} M$.

(ii) Le raisonnement est strictement identique en notant que relativement aux $t$-structures $p$ et $p(0)$,
$\bar i_* \bar j^*$ est $t$-exact de sorte que pour $K=\bar j_* L_\Om$,
$\bar i_* \bar i^* K$ et $\lexp {p(1)} {\bar i}^* K=\Psi_\eta(L_\Om)$ sont $p$-pervers alors
que $\lexp {p(1)} \hi^{-1} \bar i^* K$ est nul.
\end{proof}

\section{Filtrations de stratification d'un faisceau pervers sans torsion}
\label{para-filtration}

On suppose le schÈma $X$ muni d'une stratification
$\SF =\{ X=X^{\geq 1} \supsetneq X^{\geq 2} \supsetneq \cdots \supsetneq X^{\geq e} \}$
o˘ pour tout $1 \leq h \leq e$, la strate $X^{\geq h}$ est de pure dimension $d_h$
o˘ $=\dim X=d_1>d_2 \cdots > d_e \geq 0$. On notera 
$j^{\geq h}:X^{=h} \hookrightarrow X^{\geq h}$, $i_{h}: X^{\geq h} \hookrightarrow X$
et $j^{=h}:=i_h \circ j^{\geq h}$.
On rappelle, cf. la dÈfinition \ref{nota-fpl}, que $\FPL(X,\Lambda)$ dÈsigne la catÈgorie
quasi-abÈlienne des faisceaux pervers \og libres \fg; en indice dans les notations, on la notera
simplement $\FC$.

\subsection{Cas d'une situation de recollement}
\label{para-filtration-recollement}

Dans ce paragraphe on s'intÈresse au cas o˘ $e=2$; on notera simplement
$j:U:=X^{\geq 1}-X^{\geq 2} \hookrightarrow X$ et $i:F:=X^{\geq 2}\hookrightarrow X$. 
Pour $L \in \FPL(X,\Lambda)$, on considËre le diagramme suivant 
$$\xymatrix{ 
& L \ar[drr]^{\can_{*,L}} \\
\lexp {p+} j_! j^* L \ar[ur]^{\can_{!,L}} 
\ar@{->>}[r]|-{+} & \lexp p j_{!*}j^* L \ar@{^{(}->>}[r]_+ & 
\lexp {p+} j_{!*} j^* L \ar@{^{(}->}[r]|-{+} & \lexp p j_*j^* L
}$$
o˘ la ligne du bas est la factorisation canonique de
$\lexp {p+} j_! j^* L \longrightarrow \lexp {p} j_* j^* L$, cf. la proposition \ref{prop-facto-can},
et les flËches \og en biais \fg{} donnÈes par adjonction. 

\begin{defi} \phantomsection \label{defi-filtration0}
On note 
$P_L:=i_*\lexp p \hi^{-1}_{libre} i^*j_* j^* L =\ker_\FC \Bigl ( \lexp {p+} j_! 
j^* L \twoheadrightarrow \lexp p j_{!*} j^* L \Bigr ).$
Avec les notations du diagramme ci-dessus, on pose
$$\Fil^0_{U,!}(L)=\im_\FC (\can_{!,L}) \quad \hbox{ et } \quad \Fil^{-1}_{U,!}(L)=\im_\FC \Bigl ( (\can_{!,L})_{|P_L} \Bigr ).$$
\end{defi}

\begin{lemm}  \phantomsection \label{lem-filtration-d2}
Avec les notations prÈcÈdentes, 
$\Fil^{-1}_{U,!}(L) \subset \Fil^0_{U,!}(L) \subset L$
est une $\FC$-filtration au sens de \ref{defi-F-filtration}, avec
$L/\Fil^0_{U,!}(L) \simeq i_* \lexp {p+} i^* L$ et 
$$\lexp p j_{!*} j^* L \htarrow_+ \Fil^0_{U,!}(L)/\Fil^{-1}_{U,!}(L).$$
\end{lemm}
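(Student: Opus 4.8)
The plan is to argue entirely inside the quasi-abelian category $\FC=\FPL(X,\Lambda)$, relying on three facts: the canonical factorization of a morphism (\ref{prop-facto-can}); the strictly exact sequence $0 \rightarrow P_L \rightarrow \lexp {p+} j_! j^* L \rightarrow \lexp p j_{!*} j^* L \rightarrow 0$, obtained from the strict epimorphism $\lexp {p+} j_! j^* L \tarrow \lexp p j_{!*} j^* L$ of the remark following \ref{nota-htarrowF} whose kernel is by definition $P_L$; and the observation that $j^*$, being $t$-exact for $p$ and $p+$, is an exact functor $\FC\rightarrow\FC_U$ which moreover turns $\can_{!,L}$ into an isomorphism (the unit of the adjunction $(\lexp {p+} j_!,j^*)$ is invertible since $j_!$ is fully faithful), so that $j^*$ detects which objects of $\FC$ are supported on the closed stratum $X^{\geq 2}$.

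First I would check that the two monomorphisms are strict. For $\Fil^0_{U,!}(L)\hookrightarrow L$ this is immediate from \ref{prop-facto-can}, which factors $\can_{!,L}$ as $\lexp {p+} j_! j^* L \tarrow \coim_\FC\can_{!,L} \htarrow \Fil^0_{U,!}(L) \harrow L$. For $\Fil^{-1}_{U,!}(L)\hookrightarrow\Fil^0_{U,!}(L)$, write $\can_{!,L}=m\circ e$ with $m:\Fil^0_{U,!}(L)\harrow L$ the strict monomorphism just produced and $e$ an epimorphism; then $(\can_{!,L})_{|P_L}=m\circ(e_{|P_L})$, and since the image in a quasi-abelian category is unaffected by postcomposition with a strict monomorphism (cf. \cite{quasi-ab} \S 1.1), $\Fil^{-1}_{U,!}(L)=\im_\FC(e_{|P_L})$; its canonical factorization then exhibits $\Fil^{-1}_{U,!}(L)\harrow\Fil^0_{U,!}(L)$ as strict. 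Hence $\Fil^{-1}_{U,!}(L)\subset\Fil^0_{U,!}(L)\subset L$ is a $\FC$-filtration in the sense of \ref{defi-F-filtration}.

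Next, since $e$ is an epimorphism we have $L/\Fil^0_{U,!}(L)=\coker_\FC m=\coker_\FC\can_{!,L}$, and by the description of cokernels in $\FC$ this is the cokernel of $\can_{!,L}$ computed in the abelian category $\lexp {p+} \FP(X)$. I would compute it by applying $\lexp {p+} \hi^\bullet$ to the adjunction triangle $j_!j^*L\rightarrow L\rightarrow i_*i^*L\leadsto$: as $j_!$ is right $t$-exact for $p+$ one gets $\lexp {p+} \hi^1(j_!j^*L)=0$, and as $i_*$ is exact one gets $\lexp {p+} \hi^0(i_*i^*L)=i_*\lexp {p+} i^*L$; the long exact sequence therefore collapses to $\lexp {p+} j_! j^* L\xrightarrow{\can_{!,L}}L\rightarrow i_*\lexp {p+} i^*L\rightarrow0$, whence $L/\Fil^0_{U,!}(L)\simeq i_*\lexp {p+} i^*L$.

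Finally, for the arrow $\lexp p j_{!*} j^* L\htarrow_+\Fil^0_{U,!}(L)/\Fil^{-1}_{U,!}(L)$: by definition $(\can_{!,L})_{|P_L}$ factors through $\Fil^{-1}_{U,!}(L)$, so the epimorphism $\lexp {p+} j_! j^* L\xrightarrow{e}\Fil^0_{U,!}(L)\twoheadrightarrow\Fil^0_{U,!}(L)/\Fil^{-1}_{U,!}(L)$ kills $P_L$; as this last surjection is the cokernel of $P_L\harrow\lexp {p+} j_! j^* L$, i.e. the canonical map $\lexp {p+} j_! j^* L\twoheadrightarrow\lexp p j_{!*} j^* L$, it factors through a morphism $\psi:\lexp p j_{!*} j^* L\rightarrow\Fil^0_{U,!}(L)/\Fil^{-1}_{U,!}(L)$, which is an epimorphism because $e$ is. To see $\psi$ is also a monomorphism I would apply $j^*$: since $j^*P_L=0$, $j^*\Fil^{-1}_{U,!}(L)=0$ and $j^*\can_{!,L}$ is an isomorphism, $j^*\psi$ is an isomorphism, so $\ker_\FC\psi=\ker_\CC\psi$ is a subobject of $\lexp p j_{!*} j^* L$ killed by $j^*$, hence supported on $X^{\geq 2}$, hence zero by the "no subobject supported on the closed stratum" property of the intermediate extension (the nullity of $\lexp p i^!\lexp p j_{!*}$ used in the proof of \ref{coro-torsion0}). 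Thus $\psi$ is a bimorphism, and the same computation shows $j^*\coker_\CC\psi=0$, i.e. $\coker_\CC\psi$ lies in the essential image of $\CC_F$, which is precisely the condition $\htarrow_+$ of \ref{nota-htarrowF}. I expect the main obstacle to be the careful bookkeeping of strictness and of the distinction between $\coker_\FC$ (computed in $\lexp {p+} \CC$, where $\psi$ is an epimorphism) and $\coker_\CC$ (computed in the $p$-perverse heart, where it carries the $X^{\geq 2}$-supported part responsible for the subscript $+$).
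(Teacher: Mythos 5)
Your proposal is correct and follows essentially the same route as the paper: the strictness of the two monomorphisms comes from the canonical factorization of $\can_{!,L}$ together with the cancellation property for strict monos, the identification $L/\Fil^0_{U,!}(L) \simeq i_*\lexp{p+}i^*L$ comes from the long exact $\lexp{p+}\hi^\bullet$-sequence of the adjunction triangle, and the bimorphism $\lexp p j_{!*}j^*L \htarrow_+ \gr^0_{U,!}(L)$ is obtained exactly as in the paper by factoring the epimorphism through $\lexp p j_{!*}j^*L$ and killing the kernel (supported on $X^{\geq 2}$) via the vanishing of $\hom_\FC(i_*L', \lexp p j_{!*}j^*L)$. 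The only cosmetic difference is that you spell out the invariance of $\im_\FC$ under postcomposition by a strict mono where the paper implicitly uses the cancellation remark following Definition \ref{defi-strict}; both are interchangeable.
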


\begin{proof}
Le premier isomorphisme est immÈdiat en utilisant la suite exacte longue de cohomologie
associÈe aux foncteurs $\lexp {p+} \hi^i$ sur 
$j_!j^* F \longrightarrow F \longrightarrow i_*i^* F \leadsto$.
\\
Comme $\Fil_{U,!}^{-1}(L)=\im_\FC \Bigl ( \can_{!,L}:P_L \longrightarrow \Fil^0_{U,!}(L) \Bigr )$,
par dÈfinition de l'image, le monomorphisme $\Fil^{-1}_{U,!}(L) \hookrightarrow \Fil^0_{U,!}(L)$
est strict, i.e. $\gr^0_{U,!}(L):=\Fil^0_{U,!}(L)/\Fil^{-1}_{U,!}(L)$ est un objet de $\FPL(X,\Lambda)$.
Ainsi l'Èpimorphisme $\lexp {p+} j_! j^* L \twoheadrightarrow \Fil^0_{U,!}(L) \tarrow
\gr^0_{U,!}(L)$ se factorise
en un Èpimorphisme $f:\lexp p j_{!*} j^* L \twoheadrightarrow \gr^0_{U,!}(L)$. Or ce dernier
Ètant un isomorphisme sur $U$, $\ker_\FC f$ est de la forme $i_* L'$ avec
$L' \in \FPL(F,\Lambda)$; comme $\hom_{\FC}(i_* L', \lexp p j_{!*} j^* L)$ est nul
on en dÈduit que $L'$ est nul et donc que $f$ est un monomorphisme de $\FPL(X,\Lambda)$,
d'o˘ le rÈsultat.
\end{proof}

\rem d'aprËs le lemme \ref{lem-htarrow},
il existe une flËche $ \Fil^0_{U,!}(L)/\Fil^{-1}_{U,!}(L) \htarrow_+ \lexp {p+} j_{!*} j^* L$
dont la composÈe avec celle de l'ÈnoncÈ est la flËche canonique
$\lexp p j_{!*} j^* L \htarrow_+ \lexp {p+} j_{!*} j^* L$.
$$\xymatrix{
\lexp p j_{!*} j^* L \ar@{^{(}->>}[rr]_+ \ar@{^{(}->>}[drr]_+
& & \Fil^0_{U,!}(L)/\Fil^{-1}_{U,!}(L) \ar@{^{(}->>}[d]_+ \\
& & \lexp {p+} j_{!*} j^* L
}$$
On rÈsume les constructions prÈcÈdentes par le diagramme suivant:
$$\xymatrix{
& \lexp p j_{!*} j^* L \ar@{^{(}->>}[r]_+ & \gr^0_{U,!}(L) \\
& \lexp {p+} j_! j^* L \ar@{->>}[u]|-{+} \ar@{->>}[r] & \Fil^0_{U,!}(L) \ar@{->>}[u]|-{+}
\ar@{^{(}->}[r]|-{+} & L  \ar@{->>}[rr]|-{+} && i_* \lexp {p+} i^* L \\
\lexp p \hi^{-1}_{libre} i^* j_* j^* L \ar@{=}[r] & P_L \ar@{^{(}->}[u]|-{+} \ar@{->>}[r] & \Fil^{-1}_{U,!}(L) 
\ar@{^{(}->}[u]|-{+}
}$$

\rem en utilisant le morphisme $\can_{*,L}$ du diagramme du dÈbut de ce paragraphe,
le lecteur vÈrifiera aisÈment que 
$\Fil^{-1}_{U,!}(L)=\ker_\FC \Bigl ( (\can_{*,L})_{|\im_\FC(\can_{!,L})} 
\Bigr )$ et $\gr^0_{U,!}(L)=\coim_\FC \Bigl ( (\can_{*,L})_{|\im_\FC (\can_{!,L})} \Bigr )$ lequel
est bien \og coincÈ \fg{} entre
$$\lexp p j_{!*} j^* L=\coim_\FC \Bigl ( (\can_{*,L})_{|\coim_\FC(\can_{!,L})} \Bigr ) \hbox{ et }
\lexp {p+} j_{!*} j^* L=\im_\FC \Bigl ( (\can_{*,L})_{|\im_\FC(\can_{!,L})} \Bigr ).$$

\begin{defi} \phantomsection \label{defi-saturee}
La filtration $\Fil_{U,!}^{-1}(L) \subset \Fil^0_{U,!}(L) \subset L$
est dite saturÈe si $\can_{!,L}$ est strict i.e. si
$\Fil^0_{U,!}(L)=\coim_\FC(\can_{!,L}).$
\end{defi}

\rem si $\Fil^0_{U,!}(L)=\coim_\FC(\can_{!,L})$ alors la composÈe
$P_L \harrow j_! j^* L \tarrow \Fil^0_{U,!}(L)$ est stricte i.e.
$\Fil^{-1}_{U,!}(L)=\coim_\FC \Bigl ( (\can_{!,L})_{|P_L}
\Bigr )$. De mÍme l'Èpimorphisme $\lexp p j_{!*} j^* L \tarrow \Fil^0_{U,!}(L) \tarrow \gr^0_{U,!}(L)$
est strict de sorte que ce dernier est un isomorphisme.

\rem la dualitÈ $D$ de Grothendieck-Verdier stabilise $\FPL(X,\Lambda)$, 
Èchange les monomorphismes
(resp. stricts) avec les Èpimorphismes (resp. stricts). En utilisant $D(\im_\FC f)=\coim_\FC
(Df)$ et $D(\can_{!,L})=\can_{*,D(L)}$, on peut dualiser les constructions prÈcÈdentes, ce
qui donne des cofiltrations dÈfinies comme suit.

\begin{defi} \phantomsection \label{defi-cofiltration0}
On note 
$Q_L=i_* \lexp p \hi^0_{libre} i^*j_*j^* L=\coker_\FC \Bigl ( \lexp {p+} j_{!*} j^* L†
\longrightarrow \lexp p j_* j^* L \Bigr )$
et $p_L$ l'Èpimorphisme $p_L:\lexp {p+} j_{!*} j^* L \twoheadrightarrow Q_L$.
On dÈfinit alors
$$\CoFil_{U,*,0}(L)=\coim_\FC(\can_{*,L}) \quad \hbox{ et } \quad 
\CoFil_{U,*,1}(L)=\coim_\FC(p_L \circ \can_{*,L}).$$
\end{defi}

On obtient ainsi une $\FC$-cofiltration,
$L \twoheadrightarrow \CoFil_{U,*,0}(L) \twoheadrightarrow \CoFil_{U,*,1}(L)$
dont le noyau $\cogr_{U,*,1}(L)$ de $\CoFil_{U,*,0}(L) \twoheadrightarrow \CoFil_{U,*,1}(L)$
s'inscrit dans un diagramme commutatif
$$\xymatrix{
\lexp p j_{!*} j^* L \ar@{^{(}->>}[rr]_+ \ar@{^{(}->>}[drr]_+ 
& & \cogr_{U,*,1}(L) \ar@{^{(}->>}[d]_+ \\
& & \lexp {p+} j_{!*} j^* L.
}$$
En ce qui concerne le noyau de $L \twoheadrightarrow \CoFil_{U,*,0}(L)$, en vertu de la suite exacte dans $\CC$:
$$0 \rightarrow i_* \lexp p i^! L \longrightarrow L \longrightarrow \lexp p j_*j^* L 
\longrightarrow i_* \lexp p \hi^1 i^! L \rightarrow 0,$$
on en dÈduit qu'il est isomorphe ‡ $i_* \lexp p i^! L$.
On rÈsume cette construction par le diagramme
$$\xymatrix{
&&&& \cogr_{U,*,1}(L) \ar@{^{(}->>}[r]_+ \ar@{^{(}->}[d]|-{+} & \lexp {p+} j_{!*} j^* L 
\ar@{^{(}->}[d]|-{+} \\
i_* \lexp p i^! L \ar@{^{(}->}[rr]|-{+} &&
L \ar@{->>}[rr]|-{+} && \CoFil_{U,*,0}(L) \ar@{^{(}->}[r] \ar@{->>}[d]|-{+} & \lexp p j_* j^* L
\ar@{->>}[d] \\
&&&& \CoFil_{U,*,1}(L) \ar@{^{(}->}[r] & Q_L \ar@{=}[r] & \lexp p \hi^0_{libre} i^*j_*j^* L
}$$
avec $\CoFil_{U,*,0}(L) \simeq D \Bigl ( \Fil^0_{U,!}(D(L)) \Bigr )$ et
$\CoFil_{U,*,1}(L) \simeq D \Bigl ( \Fil^{-1}_{U,!}(D(L)) \Bigr )$.

\begin{defi} 
La cofiltration $L \twoheadrightarrow \CoFil_{U,*,0}(L) \twoheadrightarrow \CoFil_{U,*,1}(L)$
est dite saturÈe si $\can_{*,L}$ est strict, i.e. si les coimages de la dÈfinition
\ref{defi-cofiltration0} sont Ègales aux images.
\end{defi}

\rem le lecteur notera que $p_L$ Ètant strict si $\can_{*,L}$ est strict alors
$p_l \circ \can_{*,L}$ aussi. L'auteur prÈfÈrant les filtrations aux cofiltrations,
on utilisera la dÈfinition suivante.

\begin{defi}
Pour $\delta=0,1$, on note $\Fil^\delta_{U,*}(L)=\ker_\FC \bigl ( L \twoheadrightarrow
\CoFil_{U,*,\delta}(L) \bigr )$, i.e.
$$\Fil_{U,*}^{0}(L)=\ker_\FC (\can_{*,L}) \quad \hbox{et} \quad \Fil_{U,*}^{1}(L)=\ker_\FC
(p_L \circ \can_{*,L}).$$
On obtient ainsi une $\FC$-filtration
$\Fil_{U,*}^{0}(L) \hookrightarrow \Fil_{U,*}^{1}(L) \hookrightarrow L$.
\end{defi}

\rem de la mÍme faÁon la filtration $\Fil^{-1}_U(L) \subset \Fil^0_U(L) \subset L$ dÈfinit une 
$\FC$-cofiltration
$$L \twoheadrightarrow \CoFil_{U,!,-1}(L) \twoheadrightarrow \CoFil_{U,!,0}(L)$$
avec $\CoFil_{U,!,-1}(L):= L/\Fil^{-1}_{U,!}(L)$ et $\Fil_{U,!,0}(L):=L / \Fil^0_{U,!}(L)$.
La dualitÈ de Verdier-Grothendieck Èchange ces notions, i.e. pour $\delta=0,1$ on a
$\Fil^{\delta}_{U,*}(L) \simeq D \Bigl ( \CoFil_{U,!,-\delta}( D(L)) \Bigr )$ et 
$\Fil^{-\delta}_{U,!}(L)  \simeq D \Bigl ( \CoFil_{U,*,\delta}( D(L)) \Bigr )$.

\subsection{Filtration et cofiltration de stratification}
\label{para-def-filtration0}

Traitons ‡ prÈsent le cas gÈnÈral o˘ la stratification $\SF$ admet Èventuellement 
plus de deux strates, i.e. $e \geq 2$. Pour tout $1 \leq h <e$, on notera
$X^{1 \leq h}:=X^{\geq 1}-X^{\geq h+1}$ et $j^{1 \leq h}:X^{1†\leq h} \hookrightarrow
X^{\geq 1}$ l'inclusion correspondante.

\begin{defi} Pour $L$ un objet de $\FC$ et $1 \leq r \leq e-1$, soit
$$\Fil^r_{\SF,!}(L):=\im_\FC \Bigl ( \lexp {p+} j^{1 \leq r}_! j^{1 \leq r,*} L \longrightarrow L \Bigr ).$$
\end{defi}

\begin{prop} \phantomsection \label{prop-ss-filtration}
La dÈfinition prÈcÈdente munit fonctoriellement tout objet $L$ de $\FC$ d'une 
$\FC$-filtration, au sens de \ref{defi-F-filtration}, dite de $\SF$-stratification
$$0=\Fil^{0}_{\SF,!}(L) \subset \Fil^1_{\SF,!}(L) \subset \Fil^1_{\SF,!}(L) \cdots \subset \Fil^{e-1}_{\SF,!}(L) 
\subset \Fil^e_{\SF,!}(L)=L.$$
\end{prop}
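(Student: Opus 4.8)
The plan is to proceed by induction on $r$, using the two-stratum case from \S\ref{para-filtration-recollement} as the inductive engine. For fixed $r$, write $U_r := X^{1\leq r}$ with open immersion $j^{1\leq r}: U_r \hookrightarrow X$ and let $i_{r+1}: X^{\geq r+1} \hookrightarrow X$ be the complementary closed immersion; this is exactly a recollement situation of the type studied in \S\ref{para-recollement}. First I would check that $\Fil^r_{\SF,!}(L) = \im_\FC(\lexp{p+} j^{1\leq r}_! j^{1\leq r,*} L \to L)$ is well-defined as an object of $\FC$: since $j^{1\leq r}$ is (a composite of open immersions hence) affine, or more directly since $j^{1\leq r,*} L$ lies in $\FPL(U_r,\Lambda)$, Proposition~\ref{prop-perversites} (applied stratum by stratum) gives that $\lexp{p+} j^{1\leq r}_! j^{1\leq r,*} L$ is a free perverse sheaf, and its image in $\FC$ in the sense of the quasi-abelian category is again an object of $\FC$ — this is where the passage to coimages/images of \S\ref{para-quasi} is used. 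So $\Fil^r_{\SF,!}(L)$ is the analogue of $\Fil^0_{U_r,!}(L)$ from Definition~\ref{defi-filtration0} for the two-stratum situation $(U_r, X^{\geq r+1})$.

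The key inclusion $\Fil^r_{\SF,!}(L) \subset \Fil^{r+1}_{\SF,!}(L)$ I would get from the factorization of adjunction maps: the open immersion $j^{1\leq r}$ factors through $j^{1\leq r+1}$, so the adjunction morphism $\lexp{p+} j^{1\leq r}_! j^{1\leq r,*} L \to L$ factors (via base change / the transitivity $j^{1\leq r}_! = j^{1\leq r+1}_! \circ (j')_!$ for the inner open immersion $j': U_r \hookrightarrow U_{r+1}$) through $\lexp{p+} j^{1\leq r+1}_! j^{1\leq r+1,*} L \to L$. Taking images in $\FC$ and using that image is monotone under such factorizations gives $\Fil^r_{\SF,!}(L) \hookrightarrow \Fil^{r+1}_{\SF,!}(L)$. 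The content of Proposition~\ref{prop-ss-filtration} beyond this inclusion is that this monomorphism is \emph{strict}, i.e.\ that the quotient $\Fil^{r+1}_{\SF,!}(L)/\Fil^r_{\SF,!}(L)$ is again free. Here I would run the argument of Lemma~\ref{lem-filtration-d2} in the recollement $(U_r, X^{\geq r+1})$ \emph{after} restricting everything to the open subscheme $X^{\geq 1} - X^{\geq r+2} = U_{r+1}$ (so that $U_r$ becomes the open stratum and $X^{=r+1}$ the closed one): on $U_{r+1}$, $\Fil^{r+1}_{\SF,!}(L)$ restricts to $j^{1\leq r+1,*}L$ itself, and the graded piece $\gr^r_{\SF,!}(L)$ restricted to $U_{r+1}$ is sandwiched between $\lexp{p} (j')_{!*} j^{1\leq r,*}L$ and $\lexp{p+} (j')_{!*} j^{1\leq r,*}L$, in particular is a free perverse sheaf supported (after passing back to $X$) on $X^{\geq r+1}$; the argument that the surjection from $\lexp p (j')_{!*}$ onto it is mono because its kernel would be supported on $X^{=r+1}$ yet maps to an intermediate extension is verbatim the last paragraph of the proof of Lemma~\ref{lem-filtration-d2}. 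Strictness of the monomorphism into $L$ then follows because the cokernel $L/\Fil^{r+1}_{\SF,!}(L) \simeq i_{r+2,*}\lexp{p+} i_{r+2}^* L$ is free, just as in Lemma~\ref{lem-filtration-d2}.

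Functoriality is formal: $L \mapsto \lexp{p+} j^{1\leq r}_! j^{1\leq r,*} L$ is a functor, the adjunction map is natural, and image in a quasi-abelian category is functorial on morphisms, so $\Fil^r_{\SF,!}$ is a subfunctor of the identity. The boundary cases are immediate — for $r = e-1$ one has $U_{e-1} = X^{\geq 1} - X^{\geq e}$ dense open with $j^{1\leq e-1,*}L$ restricting to the generic part of $L$, and $\Fil^{e-1}_{\SF,!}(L) \subset L$ has cokernel $i_{e,*}\lexp{p+}i_e^* L$ so the filtration reaches $L$ only at $r = e$, whence the convention $\Fil^e_{\SF,!}(L) = L$; and $\Fil^0_{\SF,!}(L) = 0$ since $j^{1\leq 0}_!$ is the zero functor (empty open). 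I expect the main obstacle to be the strictness claim, specifically ensuring that the graded pieces are honestly free rather than merely having a torsion-supported discrepancy — the subtlety being that "image" must be taken consistently in the quasi-abelian sense (coimage-then-saturate), and one must verify the compatibility of these images across the nested recollements; once the reduction to the open set $U_{r+1}$ is made, though, this is exactly Lemma~\ref{lem-filtration-d2} and nothing new is required.
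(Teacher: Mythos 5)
Your argument for the existence of the inclusion $\Fil^r_{\SF,!}(L)\subset\Fil^{r+1}_{\SF,!}(L)$ (via the factorization $j^{1\leq r}_!=j^{1\leq r+1}_!\circ(j')_!$ and naturality of adjunction) is fine, and is only cosmetically different from the paper's route, which instead observes that $j^{1\leq r-1,*}\Fil^r_{\SF,!}(L)\simeq j^{1\leq r-1,*}L$ so that the adjunction map $\lexp{p+}j^{1\leq r-1}_!j^{1\leq r-1,*}L\to L$ factors through $\Fil^r_{\SF,!}(L)$.

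However, your proof of strictness of $\Fil^{r}_{\SF,!}(L)\hookrightarrow\Fil^{r+1}_{\SF,!}(L)$ takes a detour that both is unnecessary and contains a misidentification. The paper's argument is one line: $\Fil^{r-1}_{\SF,!}(L)$ is by construction an $\im_\FC$, hence a kernel in $\FC$, hence a \emph{strict} subobject of $L$; since it factors as $\Fil^{r-1}_{\SF,!}(L)\to\Fil^{r}_{\SF,!}(L)\harrow L$ and the composite is a strict mono, the quasi-abelian two-out-of-three property (the remark after Definition~\ref{defi-strict}: if $u\circ v$ is a strict mono then $v$ is strict) gives directly that $\Fil^{r-1}_{\SF,!}(L)\to\Fil^{r}_{\SF,!}(L)$ is a strict mono. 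You never invoke this, and instead try to verify freeness of the graded piece by restricting to $U_{r+1}$ and running Lemma~\ref{lem-filtration-d2}. Two problems with that route. First, you claim the restricted graded piece is sandwiched between $\lexp{p}(j')_{!*}j^{1\leq r,*}L$ and $\lexp{p+}(j')_{!*}j^{1\leq r,*}L$; but on $U_{r+1}$ one has $\Fil^{r+1}_{\SF,!}(L)|_{U_{r+1}}=L|_{U_{r+1}}$ and $\Fil^r_{\SF,!}(L)|_{U_{r+1}}=\Fil^0_{U_r,!}(L|_{U_{r+1}})$, so the restricted graded piece is the \emph{top} quotient $i_*\lexp{p+}i^*(L|_{U_{r+1}})$ supported on $X^{=r+1}$, not the middle sandwiched piece of Lemma~\ref{lem-filtration-d2}; the filtration $\Fil^\bullet_{\SF,!}$ is in general \emph{not} adapted to $\SF$ (this is exactly the content of the remark after Definition~\ref{defi-adpate}). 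Second, your final sentence concludes strictness of $\Fil^{r+1}_{\SF,!}(L)\harrow L$ from freeness of $L/\Fil^{r+1}_{\SF,!}(L)$, but that strictness is automatic (images in $\FC$ are strict subobjects) and is not what the definition of $\FC$-filtration requires — you need strictness of the \emph{consecutive} inclusion $\Fil^r_{\SF,!}(L)\hookrightarrow\Fil^{r+1}_{\SF,!}(L)$, which you never close. The fix is simply to invoke two-out-of-three for strict monos, which renders the whole $U_{r+1}$-restriction discussion superfluous.
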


\begin{proof}
Par construction les $\Fil^r_{\SF,!}(L)$ et $L/\Fil^r_{\SF,!}(L)$ sont des objets de $\FC$.
En outre comme $j^{1 \leq r-1,*} \Fil^r_{\SF,!}(L) \simeq j^{1 \leq r-1,*} L$, le morphisme
d'adjonction $\lexp {p+} j^{1 \leq r-1}_! j^{1 \leq r-1,*} L \longrightarrow L$ se factorise par 
$\Fil^r_{\SF,!}(L)$ 
$$\xymatrix{
&  \Fil^{r-1}_{\SF,!}(L) \ar[dl] \ar@{^{(}->}[d]|-{+} \\
\Fil^r_{\SF,!}(L) \ar@{^{(}->}[r] & L
}$$
de sorte que $\Fil^{r-1}_{\SF,!}(L) \longrightarrow \Fil^r_{\SF,!}(L)$ est un monomorphisme strict.
\end{proof}

\begin{defi} \phantomsection \label{defi-SF-saturee}
On dira que $L$ est $\SF_!$-saturÈ si pour tout  $1 \leq r \leq e-1$, le morphisme
d'adjonction $\lexp {p+} j^{1 \leq r}_! j^{1 \leq r,*} L \longrightarrow L$ est strict, i.e. si
$\Fil^r_{\SF,!}(L)=\coim_\FC \Bigl ( \lexp {p+}j^{1 \leq r}_! j^{1 \leq r,*} L \longrightarrow L \Bigr )$.
Autrement dit si pour tout $1 \leq r \leq e-1$, $\lexp p i_{r+1}^* L$ est un objet de $\FC$.
\end{defi}

\rem $\Fil^{r-1}_{\SF,!}(L)$ Ètant construit, on peut aussi obtenir $\Fil^r_{\SF,!}(L)$ comme suit.
Le quotient $L/\Fil^{r-1}_{\SF,!}(L)$ est ‡ support dans $X^{\geq r}$ et on considËre
$i_{r,*} \Fil^0_{X^{=r},!} \Bigl ( L/\Fil^{r-1}_{\SF,!}(L) \Bigr )$ au sens de la dÈfinition \ref{defi-filtration0} 
pour $U=X^{=r} \hookrightarrow X^{\geq r}$. Soit alors $\widetilde \Fil^r_{\SF,!}(L)$ 
le tirÈ en arriËre 
$$\xymatrix{
\Fil^{r-1}_{\SF,!}(L) \ar@{^{(}->}[r] & L \ar@{->>}[r] & L/\Fil^{r-1}_{\SF,!}(L) \\
\Fil^{r-1}_{\SF,!}(L) \ar@{^{(}->}[r] \ar@{=}[u] &
\widetilde \Fil^r_{\SF,!}(L) \ar@{^{(}-->}[u] \ar@{-->>}[r] & i_{r,*}\Fil^0_{X^{=r},!} \Bigl ( L/\Fil^{r-1}_{\SF,!}(L) \Bigr ) 
\ar@{^{(}->}[u]
}$$
Par construction $\lexp {p+} \hi^0 i_r^* \Bigl ( \Fil^{r-1}_{\SF,!}(L) \Bigr )$ est nul et donc
$\lexp {p+} \hi^0 i_{r+1}^* \Bigl ( \Fil^{r-1}_{\SF,!}(L) \Bigr )$ aussi. Ainsi la nullitÈ
de $\lexp {p+} \hi^0 i_{r+1}^* \Bigl ( i_{r,*}
\Fil^0_{X^{=r},!} \bigl ( L/\Fil^{r-1}_{\SF,!}(L) \bigr ) \Bigr )$
implique celle de $\lexp {p+} \hi^0 i_{r+1}^* \widetilde \Fil^r_{\SF,!}(L) $ de sorte que
$\widetilde \Fil^r_{\SF,!}(L)$ est la coimage du morphisme d'adjonction
$\lexp {p+} j^{\geq r}_! j^{\geq r,*} \Bigl ( \widetilde \Fil^r_{\SF,!}(L)  \Bigr ) \longrightarrow 
\widetilde \Fil^r_{\SF,!}(L)$. En prenant le $j^{1 \leq r,*}$ du diagramme prÈcÈdent,
on en dÈduit que $j^{1 \leq r,*} L \simeq j^{1 \leq r,*} \widetilde \Fil^r_{\SF,!}(L) $ de sorte que
$\widetilde \Fil^r_{\SF,!}(L)  \simeq \Fil^r_{\SF,!}(L)$.

\begin{defi}
Pour $L$ un objet de $\FC$ et $1 \leq r \leq e-1$, soit
$$\CoFil_{\SF,*,r}(L)=\coim_\FC \Bigl ( L \longrightarrow \lexp {p} j^{1 \leq r}_* j^{1 \leq r,*} L \Bigr ).$$
\end{defi}

\begin{prop} La dÈfinition prÈcÈdente munit fonctoriellement tout objet $L$ de $\FC$
d'une $\FC$-cofiltration dite de $\SF$-stratification
$$L = \CoFil_{\SF,*,e}(L) \twoheadrightarrow \CoFil_{\SF,*,e-1}(L) \twoheadrightarrow \cdots 
\twoheadrightarrow \CoFil_{\SF,*,1}(L) \twoheadrightarrow \CoFil_{\SF,*,0}(L)=0.$$
\end{prop}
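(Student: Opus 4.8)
The plan is to mirror, dualized, the proof of Proposition~\ref{prop-ss-filtration}. When Grothendieck--Verdier duality is available on $X$ (cases (a) and (b) of \S\ref{para-schema}) it suffices to apply the duality functor $D$: it exchanges $\lexp {p+} j_!$ with $\lexp p j_*$, images with coimages, and strict monomorphisms with strict epimorphisms, so $\CoFil_{\SF,*,r}(L)\simeq D\bigl(\Fil^r_{\SF,!}(D(L))\bigr)$ and the asserted chain is the Verdier dual of the filtration of Proposition~\ref{prop-ss-filtration}. Since the statement is also meant to hold in the situation (c), where there is no dualizing complex, I would instead run the dual argument directly.

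First I would check that the objects are well defined. The immersion $j^{1\leq r}:X^{1\leq r}\hookrightarrow X^{\geq 1}=X$ is open, with closed complement $X^{\geq r+1}$; since $j^{1\leq r,*}$ is $t$-exact for the perverse $t$-structure $p$, the mod-$\varpi$ criterion (remark after Notation~\ref{nota-fpl}) gives $j^{1\leq r,*}L\in\FPL(X^{1\leq r},\Om)$, and then the lemma preceding Notation~\ref{nota-htarrowF}, applied to the recollement of $X$ along the open $X^{1\leq r}$ and the closed $X^{\geq r+1}$, gives $\lexp p j^{1\leq r}_* j^{1\leq r,*}L\in\FC$. Hence $\CoFil_{\SF,*,r}(L)=\coim_\FC\bigl(L\to\lexp p j^{1\leq r}_* j^{1\leq r,*}L\bigr)$ is a genuine coimage in the quasi-abelian category $\FC$, so it is an object of $\FC$, and the canonical projection $p_r:L\twoheadrightarrow\CoFil_{\SF,*,r}(L)$ is a strict epimorphism. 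The two ends of the chain are immediate: for $r=e$ one has $X^{1\leq e}=X$, so $j^{1\leq e}=\Id$ and $\CoFil_{\SF,*,e}(L)=L$; for $r=0$ one has $X^{1\leq 0}=\emptyset$, so $j^{1\leq 0,*}L=0$ and $\CoFil_{\SF,*,0}(L)=0$.

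Next I would build the transition maps. For $1\leq r\leq e$ the immersion $a:X^{1\leq r-1}\hookrightarrow X^{1\leq r}$ is open and $j^{1\leq r-1}=j^{1\leq r}\circ a$; compatibility of the adjunction units with this composition factors the adjunction morphism $L\to\lexp p j^{1\leq r-1}_* j^{1\leq r-1,*}L$ through $L\to\lexp p j^{1\leq r}_* j^{1\leq r,*}L$. Therefore $\ker_\FC p_r=\ker_\CC p_r\subseteq\ker_\CC p_{r-1}=\ker_\FC p_{r-1}$, using that the kernel in $\FC$ of a morphism of $\FC$ agrees with its kernel in $\CC$; so $p_{r-1}$ annihilates $\ker_\FC p_r$ and, by the universal property of $\CoFil_{\SF,*,r}(L)=\coker_\FC\bigl(\ker_\FC p_r\hookrightarrow L\bigr)$, it factors as $p_{r-1}=g_r\circ p_r$ for a unique $g_r:\CoFil_{\SF,*,r}(L)\to\CoFil_{\SF,*,r-1}(L)$, which is an epimorphism since $p_{r-1}$ is. Finally, $p_{r-1}=g_r\circ p_r$ is the projection onto a coimage, hence a strict epimorphism, so the cancellation property for strict epimorphisms (remark following Definition~\ref{defi-strict}, or \cite{quasi-ab} Proposition~1.1.8) forces $g_r$ to be a strict epimorphism as well. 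This yields the $\FC$-cofiltration, and functoriality in $L$ is clear since $j^{1\leq r,*}$, $\lexp p j^{1\leq r}_*$, $\coim_\FC$ and the factorizations above are all functorial.

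I expect the only genuinely delicate input to be the first one: that $\lexp p j^{1\leq r}_* j^{1\leq r,*}L$ is again ``libre'', which legitimizes forming the coimage in $\FC$ and the identity $\ker_\FC=\ker_\CC$; but this is exactly the content of the lemma on $\lexp p j_*(\FC_U)\subset\FC$. It is essential here, as in the whole section, to use $\lexp p j_*$ and coimages rather than $\lexp {p+} j_*$ and images --- the ``saturation'' phenomenon of the introduction --- which is precisely why the cofiltration is, by construction, one of objects of $\FC$. The remaining steps are formal manipulations inside the quasi-abelian category $\FC$.
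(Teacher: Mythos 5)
Your proof is correct and takes the expected approach: the paper states this proposition without proof, implicitly as the Verdier dual of Proposition~\ref{prop-ss-filtration}, and you correctly carry out that dual argument (factoring the adjunction $L\to\lexp p j^{1\leq r-1}_*j^{1\leq r-1,*}L$ through the one for the larger open, hence $\ker_\FC p_r\subseteq\ker_\FC p_{r-1}$, then invoking strictness-cancellation for epimorphisms) while also noting the shortcut $\CoFil_{\SF,*,r}(L)\simeq D(\Fil^r_{\SF,!}(D(L)))$ in the presence of a dualizing complex. Your remark that a direct argument is needed to cover case~(c), where no dualizing complex exists, is a legitimate point that the paper glosses over.
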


\rem comme prÈcÈdemment, on peut construire
cette cofiltration de maniËre itÈrative. Supposons construit $\CoFil_{\SF,*,r-1}(L)$
et notons $K_r=\ker_\FC \Bigl ( L \tarrow \CoFil_{\SF,*,r-1}(L) \Bigr )$ ‡ support dans
$X^{\geq r}$. Soient alors $K_r \tarrow i_{r,*}\CoFil_{X^{=r},*,0} (K_r)$ et le poussÈ en avant
$$\xymatrix{
K_r \ar@{->>}[d] \ar@{^{(}->}[r] & L \ar@{-->>}[d] \ar@{->>}[r] & \CoFil_{\SF,*,r-1}(L) \ar@{=}[d] \\
i_{r,*} \CoFil_{X^{=r},*,0} (K_r) \ar@{^{(}-->}[r] & \widetilde \CoFil_{\SF,*,r}(L) \ar@{->>}[r] &  
\CoFil_{\SF,*,r-1}(L).
}$$
Comme prÈcÈdemment on montre que 
$\CoFil_{\SF,*,r}(L) \simeq \widetilde \CoFil_{\SF,*,r}(L) $.

\begin{defi}
Pour tout $0 \leq r \leq e$, on note 
$$\Fil_{\SF,*}^{-r}(L):=\ker_\FC \bigl ( L \twoheadrightarrow \CoFil_{\SF,*,r}(L) \bigr ),$$
i.e. $\Fil_{\SF,*}^{-r}(L)=\ker_\FC \bigl ( \can_{*,L}:L \longrightarrow
 \lexp {p} j^{1 \leq r}_* j^{1 \leq r,*} L \bigr )$. On obtient ainsi une $\FC$-filtration
 $$0=\Fil_{\SF,*}^{-e}(L) \subset \Fil_{\SF,*}^{1-e}(L) \subset \cdots \subset \Fil_{\SF,*}^0(L)=L$$
 dont on note $\gr_{\SF,*}^{-r}(L)$ les graduÈs. On dit que $L$ est $\SF_*$-saturÈ si
 pour tout $1 \leq r \leq e-1$, l'Èpimorphisme $L \twoheadrightarrow \CoFil_{\SF,*,r}(L)$
 est strict.
\end{defi}

\rem on peut aussi dÈfinir la cofiltration $\CoFil_{\SF,!,-r}(L):=L/\Fil_{\SF,!}^r(L)$.
Comme prÈcÈdemment la dualitÈ de Verdier Èchange ces notions, i.e.
$D \Bigl ( \CoFil_{\SF,!,-r}(L) \Bigr ) \simeq \Fil^{-r}_{\SF,*}( D(L))$, et
$D \Bigl ( \CoFil_{\SF,*,r}(L) \Bigr ) \simeq \Fil^{r}_{\SF,!}( D(L))$.

\subsection{Filtrations exhaustives de stratification}
\label{para-def-filtration}

Dans ce paragraphe on cherche ‡ raffiner les filtrations du paragraphe prÈcÈdent 
de faÁon ‡ obtenir la filtration la plus fine possible \og adaptÈe \fg{} ‡ la stratification
$\SF$ au sens suivant.

\begin{defi} \label{defi-adpate}
Un faisceau pervers $L \in \FPL(X,\Lambda)$ est dit \emph{adaptÈ ‡ la stratification} $\SF$
s'il existe $1 \leq h \leq e$ tel que $L \simeq i_{h,*} i_h^* L$ et que le morphisme
d'adjonction $j^{\geq h}_! j^{\geq h,*} \bigl ( i_h^* L \bigr ) \longrightarrow i_h^* L$
induit un bimorphisme $\lexp p j^{= h}_{!*} j^{= h,*} L \htarrow_+ L$ o˘ l'on rappelle que
$i_{h,*}=i_{h,!}=\lexp p i_{h,!*}=\lexp {p+} i_{h,!*}$ et $j^{=h}:=i_h \circ j^{\geq h}$.
Une $\FC$-filtration sera dite \emph{adaptÈe} ‡ $\SF$ si tous ses graduÈs le sont.
\end{defi}

\rem les filtrations $\Fil_{\SF,!}^\bullet$ et $\Fil_{\SF,*}^\bullet$ du paragraphe
prÈcÈdent ne sont, en gÈnÈral, pas adaptÈes ‡ $\SF$. Une faÁon d'y remÈdier serait de les
combiner en considÈrant les $\gr^{i_1}_{\SF,*} \gr^{j_1}_{\SF,!} \cdots \gr^{i_e}_{\SF,*}
\gr^{j_e}_{\SF,!} (L)$. Une autre, plus simple, consiste ‡ utiliser
les $\Fil^{-1}_{U,!}$ du \S \ref{para-filtration-recollement}.

\begin{nota}
Pour $e \geq 2$ et $r \in \Zm-\{ 0 \}$, on note $v_{2,e}(r)$ la valuation $2$-adique de $r$
i.e. $r=2^{v_{2,e}(r)}m$ avec $m$ impair et on pose $v_{2,e}(0)=e-1$.
\end{nota}

\begin{prop}
\'Etant donnÈe une stratification 
$\SF =\{ X=X^{\geq 1} \supsetneq \cdots \supsetneq X^{\geq e} \}$
de $X$, tout objet $L$ de $\FPL(X,\Lambda)$ est muni fonctoriellement 
d'une $\FC$-filtration dite exhaustive de $\SF$-stratification adaptÈe ‡ $\SF$ au sens
de la dÈfinition prÈcÈdente
$$0=\Fill^{-2^{e-1}}_{\SF,!}(L) \subset \Fill^{-2^{e-1}+1}_{\SF,!}(L) \subset \cdots \subset
\Fill^0_{\SF,!}(L) \subset \cdots \subset \Fill^{2^{e-1}-1}_{\SF,!}(L)=L,$$
telle que pour tout $|r|<2^{e-1}$, il existe $P_r \in \FPL(X^{=e-v_2(r)},\Lambda)$ tel que 
$\grr^r_{\SF,!}(L):=\Fill^r_{\SF,!}(L)/\Fill^{r-1}_{\SF,!}(L)$ s'inscrive dans un diagramme
commutatif de bimorphismes
$$\xymatrix{
\lexp p j^{= e-v_2(r)}_{!*} P_r \ar@{^{(}->>}[rr]_+ \ar@{^{(}->>}[dr]_+ & & 
\lexp {p+} j^{= e-v_2(r)}_{!*} P_r \\
& \grr^r_{\SF,!}(L) \ar@{^{(}->>}[ur]_+
}$$
\end{prop}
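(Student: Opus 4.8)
The plan is to build the filtration by induction on the number of strata $e$, combining the two-strata construction $\Fil^{-1}_{U,!} \subset \Fil^0_{U,!} \subset L$ of \S\ref{para-filtration-recollement} with an interpolation scheme governed by the $2$-adic valuation $v_2$. The indexing by $\{-2^{e-1}, \ldots, 2^{e-1}-1\}$ suggests a dyadic refinement: at the top level ($e$ strata) one uses the open stratum $U = X^{=1} \hookrightarrow X$ with closed complement $F = X^{\geq 2}$, produces the three-step filtration $\Fill^{-1} \subset \Fill^0 \subset L$ with $\Fill^0/\Fill^{-1}$ coinced between $\lexp p j^{=1}_{!*}$ and $\lexp{p+} j^{=1}_{!*}$ of $j^{=1,*}L$ (this is precisely $\grr^0$, with $v_2(0) = e-1$, so $P_0 \in \FPL(X^{=1},\Lambda)$), while $\Fill^{-1}(L) = P_L$ lives over $F$ (built from $\lexp p\hi^{-1}_{libre} i^* j_* j^* L$) and $L/\Fill^0(L) \simeq i_*\lexp{p+}i^*L$ also lives over $F$. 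Then I recurse on these two pieces, each supported on the scheme $X^{\geq 2}$ stratified by $e-1$ strata, placing their refined filtrations into the index ranges $[-2^{e-1}, -1]$ and $[1, 2^{e-1}-1]$ respectively.

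First I would set up the induction carefully. For $e = 2$ the statement is exactly Lemma~\ref{lem-filtration-d2} together with its following remarks: $\Fill^{-1}_{\SF,!}(L) = \Fil^{-1}_{U,!}(L) \subset \Fil^0_{U,!}(L) = \Fill^0_{\SF,!}(L) \subset L = \Fill^1_{\SF,!}(L)$, with $\grr^0 = \gr^0_{U,!}(L)$ coinced between $\lexp p j^{=1}_{!*} j^{=1,*}L$ and $\lexp{p+} j^{=1}_{!*} j^{=1,*}L$ (so $P_0 = j^{=1,*}L$, consistent with $e - v_2(0) = e - (e-1) = 1$), while $\grr^{-1} = \Fil^{-1}_{U,!}(L)$ and $\grr^1 = L/\Fil^0_{U,!}(L) = i_*\lexp{p+}i^*L$ are adapted to $\SF$ because each is supported on $X^{\geq 2} = X^{=2}$, hence of the form $i_{2,*}i_2^*(\cdot)$, and for a perverse sheaf on the closed stratum the bimorphism condition $\lexp p j^{=2}_{!*} P \htarrow_+ (\cdot)$ holds trivially (the relevant $j$ is the identity, or one invokes the remark after Notation~\ref{nota-htarrowF} that such a bimorphism is an isomorphism).

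Next, for the inductive step $e \geq 3$: apply the $e=2$ construction to $U = X^{=1} \hookrightarrow X$, $F = X^{\geq 2}$, obtaining $\Fill^{-1}_{U,!}(L) \subset \Fill^0_{U,!}(L) \subset L$. The graded piece $\grr^0$ in the middle is $\grr^0_{\SF,!}(L)$, coinced between $\lexp p j^{=1}_{!*} P_0$ and $\lexp{p+} j^{=1}_{!*} P_0$ with $P_0 = j^{=1,*}L \in \FPL(X^{=1},\Lambda)$; this is the $r=0$ term and it is adapted to $\SF$ with $e - v_2(0) = 1$. Both $A := \Fill^{-1}_{U,!}(L) = P_L$ and $B := L/\Fill^0_{U,!}(L) = i_*\lexp{p+}i^*L$ are supported on $X^{\geq 2}$; view them as objects of $\FPL(X^{\geq 2},\Lambda)$ stratified by the $(e-1)$-stratum stratification $\SF' = \{X^{\geq 2} \supsetneq \cdots \supsetneq X^{\geq e}\}$, and apply the inductive hypothesis to each. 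This yields, for $A$, a filtration indexed by $\{-2^{e-2}, \ldots, 2^{e-2}-1\}$ which I reindex into $\{-2^{e-1}, \ldots, -2^{e-2}-1\}$ (shift by $-2^{e-2}$), and for $B$ one indexed by the same range reindexed into $\{2^{e-2}+1, \ldots, 2^{e-1}-1\}$ plus one extra slot. The key bookkeeping point: a graded piece sitting at recursion depth $d$ and internal index $r'$ with $v_2(r') = k$ in the $(e-1)$-stratum problem on $X^{\geq 2}$ corresponds to a stratum $X^{=(e-1)-k+1}$ relative to $X^{\geq 2}$, i.e. $X^{=e-k}$ in $X$; after the reindexing its new index $r$ satisfies $v_2(r) = k+1$ (for the $A$-side) or similar, so that $e - v_2(r) = e - k - 1 = (e-1) - k$, matching $P_r \in \FPL(X^{=(e-1)-k+1 \text{ in } X^{\geq 2}})$ — one checks the valuation shifts are exactly compensated by the dyadic offsets $2^{e-2}$. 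This is the arithmetic heart of the indexing and I would verify it by induction on $e$, using $v_2(r \pm 2^{e-2}) = v_2(2^{e-2}) = e-2$ when $|r| < 2^{e-2}$, and $v_2(\pm 2^{e-2}) = e-2$.

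The main obstacle — and the step deserving the most care — is \textbf{strictness and functoriality of the splicing}. When I glue the inductively-obtained $\FC$-filtration of $A = \Fill^{-1}_{U,!}(L)$ below $\Fill^{-1}_{U,!}(L) \subset L$ and that of $B = L/\Fill^0_{U,!}(L)$ above $\Fill^0_{U,!}(L) \subset L$, I must check that each newly inserted inclusion $\Fill^{r-1}_{\SF,!}(L) \hookrightarrow \Fill^{r}_{\SF,!}(L)$ is a \emph{strict} monomorphism in $\FC$, i.e. the quotient is again free (an object of $\FPL(X,\Lambda)$). For the $B$-side this needs: if $0 \to A' \to L \to B \to 0$ is strictly exact in $\FC$ and $B' \hookrightarrow B$ is strict, then the pullback $\widetilde{B'} \hookrightarrow L$ is strict — which follows from the pullback/pushout stability of strict monos and epis in a quasi-abelian category established in the Proposition proving $\FC$ quasi-abelian (the $\widetilde L_1, \widetilde L_2$ constructions), exactly as in the remark following Proposition~\ref{prop-ss-filtration} and the $\widetilde\Fil$-construction there. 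For the $A$-side it is dual. Functoriality follows because every object in sight ($P_L$, $i_*\lexp{p+}i^*L$, $\lexp p j_{!*}$, $\lexp{p+}j_{!*}$, the images of adjunction maps) is built by functorial operations, and a morphism $L \to L'$ induces compatible maps at each stage by the universal properties of image/coimage in $\FC$; one checks the induced maps respect the dyadic indexing because the construction at each level only references the canonical adjunction morphisms $\can_{!}$, $\can_{*}$. Finally, the "adapted" claim for each $\grr^r$ is inherited: the $r=0$ piece is handled by Lemma~\ref{lem-filtration-d2} and the remark after it (which gives precisely the bimorphism diagram $\lexp p j^{=1}_{!*} P_0 \htarrow_+ \grr^0 \htarrow_+ \lexp{p+} j^{=1}_{!*} P_0$), and all other pieces come from the inductive hypothesis applied on $X^{\geq 2}$, where the relevant intermediate extension $\lexp p j^{=h}_{!*}$ computed in $X^{\geq 2}$ agrees with the one computed in $X$ because $i_{2,*}$ is $t$-exact and fully faithful and commutes with $\lexp p j_{!*}$ up to the closed-stratum recollement data — this last compatibility I would isolate as a small lemma using $j^{=h} = i_2 \circ (j^{=h} \text{ in } X^{\geq 2})$ and Corollaire~\ref{coro-torsion0}.
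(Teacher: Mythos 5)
Your overall strategy — induction on the number of strata $e$, splitting off $\grr^0$ via the two-strata construction of \S\ref{para-filtration-recollement}, then recursing on the two pieces $\Fil^{-1}_{U,!}(L)$ and $L/\Fil^0_{U,!}(L)$ supported on $X^{\geq 2}$, placing their filtrations below and above by sub-object inclusion and pullback respectively — is exactly the paper's proof, and the discussion of strictness of the spliced inclusions (pullback stability of strict monos in the quasi-abelian $\FC$) is a sound and useful elaboration.

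However, the arithmetic check on the dyadic indexing, which you yourself identify as ``the arithmetic heart of the indexing,'' is wrong as written, in two places. First, you assert that the graded piece at internal index $r'$ with $v_{2,e-1}(r')=k$ reindexes to $r=\pm 2^{e-2}+r'$ with $v_2(r)=k+1$; in fact $v_{2,e}(r)=v_{2,e-1}(r')=k$, because for $0<|r'|<2^{e-2}$ one has $v_2(r')<e-2$ so the lower-valuation summand dominates, $v_2(\pm 2^{e-2}+r')=v_2(r')$, and for $r'=0$ one has $v_{2,e-1}(0)=e-2=v_2(\pm 2^{e-2})$ by the convention on $v_{2,e-1}$. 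Second, your proposed verification lemma ``$v_2(r\pm 2^{e-2})=v_2(2^{e-2})=e-2$ when $|r|<2^{e-2}$'' is false (e.g.\ $e=4$, $r=1$: $v_2(5)=0\neq 2$); if applied literally it would place every graded piece on $X^{=2}$. Your own displayed chain ``$e-v_2(r)=e-k-1=(e-1)-k$, matching $P_r\in\FPL(X^{=(e-1)-k+1\text{ in }X^{\geq 2}})$'' is internally inconsistent, since $(e-1)-k+1=e-k\neq (e-1)-k$; this comes from conflating the stratum-label convention in $X^{\geq 2}$ (where the open stratum is $X^{=2}$) with that in $X$ (where it is $X^{=1}$). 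The correct bookkeeping, which is what makes the Proposition's index formula $P_r\in\FPL(X^{=e-v_2(r)},\Lambda)$ come out right, is $v_{2,e}(\pm 2^{e-2}+r')=v_{2,e-1}(r')$ for $|r'|<2^{e-2}$, combined with the label shift $Y^{=h}=X^{=h+1}$ for the re-labeled strata of $X^{\geq 2}$: these two shifts of $1$ cancel exactly. (A minor side remark: $\Fil^{-1}_{U,!}(L)$ is not literally $P_L$ but its image $\im_\FC\bigl((\can_{!,L})_{|P_L}\bigr)$ inside $L$; the claim you actually use, namely that it is supported on $X^{\geq 2}$, is still correct because $j^*P_L=0$ and $j^*$ is exact.)
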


\begin{proof}
On va montrer le rÈsultat par rÈcurrence sur $e$, le cas $e=2$ Ètant donnÈ 
par la construction $\Fil^{-1}_{U,!}(L) \harrow \Fil^0_{U,!}(L) \harrow \Fil^1_{U,!}(L):=L$ du 
\S \ref{para-filtration-recollement}. Supposons donc le rÈsultat acquis jusqu'au rang $e-1$
et traitons le cas de $e$.
Partant d'un objet $L$ de $\FPL(X,\Lambda)$, on note
$\Fill^0_{\SF,!}(L):=\Fil^1_{\SF,!}(L)=\Fil^0_{X^{=1}}(L)$ et
$\Fill^{-1}_{\SF,!}(L)=\Fil^{-1}_{X^{=1}}(L)$ o˘ le graduÈ 
$\grr^0_{\SF,!}(L):=\Fill^0_{\SF,!}(L)/\Fill^{-1}_{\SF,!}(L)$ \og factorise \fg{} le morphisme canonique
$\lexp p j^{\geq 1}_{!*} \bigl ( j^{\geq 1,*} L \bigr ) \htarrow_+ \lexp {p+} 
j^{\geq 1}_{!*} \bigl ( j^{\geq 1,*} L \bigr )$, i.e.
$$\xymatrix{
\lexp p j^{\geq 1}_{!*} \bigl ( j^{\geq 1,*} L \bigr ) \ar@{^{(}->>}[rr]_+ \ar@{^{(}->>}[dr]_+ & & 
\lexp {p+} j^{\geq 1}_{!*} \bigl ( j^{\geq 1,*} L \bigr ) \\
& \grr^0_{\SF,!}(L). \ar@{^{(}->>}[ur]_+
}$$
Comme $\Fill^{-1}_{\SF,!}(L)$ (resp. $L/\Fill^0_{\SF,!}(L)$) est ‡ support dans $X^{\geq 2}$, 
muni de la stratification $\SF^1:=\{ X^{\geq 2} \supsetneq \cdots \supsetneq X^{\geq e} \}$,
il est, d'aprËs l'hypothËse de rÈcurrence, muni d'une filtration
$$0=\Fill^{-2^{e-2}}_{\SF^1,!} \Bigl ( \Fill^{-1}_{\SF,!}(L) \Bigr ) \subset \cdots \subset
\Fill^0_{\SF^1,!} \Bigl ( \Fill^{-1}_{\SF,!}(L) \Bigr ) \subset \cdots  \subset 
\Fill^{2^{e-2}-1}_{\SF^1,!} \Bigl ( \Fill^{-1}_{\SF,!}(L) \Bigr )=\Fill^{-1}_{\SF,!}(L),$$
$$\bigl (\hbox{resp. }
0=\Fill^{-2^{e-2}}_{\SF^1,!} \Bigl ( L/ \Fill^0_{\SF,!}(L) \Bigr ) \subset \cdots \subset 
\Fill^{2^{e-2}-1}_{\SF^1,!} \Bigl ( L/ \Fill^0_{\SF,!}(L) \Bigr )=L/ \Fill^0_{\SF,!}(L). \bigr )$$
Pour tout $|r|< 2^{e-2}$, on pose
$\Fill_{\SF,!}^{-2^{e-2}+r}(L)=\Fill^r_{\SF^1,!} \Bigl ( \Fill^0_{\SF,!}(L) \Bigr )$
ainsi que le tirÈ en arriËre:
$$\xymatrix{
0 \ar[r] & \Fill^0_{\SF,!}(L) \ar[r] \ar@{=}[d] & L \ar[r] & L/\Fill^0_{\SF,!}(L) \ar[r] & 0 \\
0 \ar[r] & \Fill^0_{\SF,!}(L) \ar[r] & \Fill_{\SF,!}^{2^{e-2}+r}(L) \ar@{^{(}-->}[u]  \ar@{-->}[r] &
\Fill^r_{\SF^1,!} \Bigl ( L/\Fill^0_{\SF,!}(L) \Bigr ) \ar@{^{(}->}[u] \ar[r] & 0.
}$$
En notant que pour $|r|< 2^{e-2}$, $v_{2,e}(r)=v_{2,e}(\pm 2^{e-2}+r)$, on voit que
la propriÈtÈ sur les graduÈs est clairement vÈrifiÈe.
\end{proof}

\begin{defi} \phantomsection \label{defi-satureee}
On dira que $L$ est exhaustivement $\SF_!$-saturÈ si dans la construction prÈcÈdente 
tous les
morphismes d'adjonction considÈrÈs $\can_{!,P_r}$ sont stricts, cf. \ref{defi-saturee}.
\end{defi}

\rem comme prÈcÈdemment, en utilisant les $L' \twoheadrightarrow \CoFil_{U,*,0}(L') 
\twoheadrightarrow \CoFil_{U,*,1}(L')$, on construit une cofiltration exhaustive de
$\SF$-stratification
$$L=\CoFill_{\SF,*,2^{e-1}}(L) \twoheadrightarrow \CoFill_{\SF,*,2^{e-1}-1}(L)
\twoheadrightarrow \cdots \twoheadrightarrow \CoFill_{\SF,*,-2^{e-1}}(L)=0$$
vÈrifiant des propriÈtÈs analogues ‡ celles de la proposition prÈcÈdente.

\begin{defi}
Avec les notations prÈcÈdentes, pour tout $|r|<2^{e-1}$,
$$\Fill_{\SF,*}^{-r}(L)=\ker_\FC \bigl ( L \twoheadrightarrow \CoFill_{\SF,*,r}(L) \bigr )$$
dÈfinit une $\FC$-filtration dont on note $\grr_{\SF,*}^r(L)$ les graduÈs. Cette filtration
sera dite $\SF_*$-saturÈe si les Èpimorphismes $L \twoheadrightarrow \CoFil_{\SF,*,r}(L)$
sont stricts.
\end{defi}

\rem comme prÈcÈdemment la dualitÈ de Verdier Èchange filtrations et cofiltrations, i.e.
$D \bigl ( \CoFill_{\SF,*,r}(L) \bigr ) \simeq \Fill^r_{\SF,!} \bigl ( D(L) \bigr )$
et $D \bigl ( \CoFill_{\SF,!,r}(L) \bigr ) \simeq \Fill^r_{\SF,*} \bigl ( D(L) \bigr )$.
En particulier si $L \simeq D(L)$ est autodual, alors il est $\SF_!$-saturÈ si et seulement
s'il est $\SF_*$-saturÈ.

\rem pour $\Om$ un anneau de coefficients, soit $L_0 \harrow L_1$ un monomorphisme 
strict de $\FPL(X,\Om)$ dont le conoyau est de 
la forme $\bar i_*\lexp p j_{!*} P \htarrow L_1/L_0 \htarrow \bar i_* \lexp {p+} j_{!*} P$
o˘ $j:U \hookrightarrow \bar U$ est une inclusion ouverte et $i:\bar U \hookrightarrow X$
est une immersion fermÈe. On suppose donnÈ $G \in \FPL(X,\Om)$ tel que
$\bar i_*\lexp p j_{!*} P \htarrow G \htarrow \bar i_* \lexp {p+} j_{!*} P$.
Comme d'aprËs le lemme \ref{lem-Tpasnul}, $\htarrow$ est une relation d'Èquivalence
et que $L_1/L_0$ et $G$ sont Èquivalents ‡ $\bar i_*\lexp p j_{!*} P$, il existe une flËche
$G \htarrow L_1/L_0$ de sorte que le tirÈ en arriËre
$$\xymatrix{
L_0 \ar@{^{(}->}[r] \ar@{=}[d] & L'_1 \ar@{-->>}[r] \ar@{^{(}-->>}[d] & G \ar@{^{(}->>}[d] \\
L_0 \ar[r] & L_1 \ar@{->>}[r] & L_1/L_0
}$$
fournit un objet $L'_1 \htarrow L_1$ munit d'une $\FC$-filtration $L_0 \subset L'_1$
dont on a modifiÈ \og la position \fg{} du graduÈ $L'_1/L_0$ par rapport ‡ celle de $L_1/L_0$.
Plus gÈnÈralement le lecteur se convaincra qu'Ètant donnÈs:
\begin{itemize}
\item un objet $L$ de $\FC$ dont la filtration de $\SF$-stratification exhaustive fournit 
pour tout $r \in \Zm$ tel que $|r| < 2^{e-1}$ des faisceaux pervers $P_r$ sur 
$X^{=e-v_2(r)}$ et $\lexp p j^{=e-v_2(r)}_{!*} P_r \htarrow \grr^r_\SF(L) 
\htarrow \lexp {p+} j^{= e-v_2(r)}_{!*} P_r$, 

\item pour tout $|r|<2^{e-1}$ des
$\lexp p j^{= e-v_2(r)}_{!*} P_r \htarrow G_r \htarrow 
\lexp {p+} j^{= e-v_2(r)}_{!*} P_r$,
\end{itemize}
on peut construire $L'$ dont les graduÈs de la filtration exhaustive 
sont les $G_r$.

\section{Faisceaux pervers sur les variÈtÈs de Shimura simples}
\label{para-shimura0}

DÈsormais nous nous placerons dans le cas $\Lambda=\overline \Qm_l$; le but de ce 
paragraphe est d'expliciter les constructions prÈcÈdentes sur le faisceaux pervers des 
cycles Èvanescents des variÈtÈs de Shimura simples de \cite{h-t}.

\subsection{Rappels sur les reprÈsentations de \texorpdfstring{$GL_n(K)$}{Lg}}
\label{para-rappel-rep}

Dans la suite $K$ dÈsigne un corps local non archimÈdien dont le corps rÈsiduel est de 
cardinal $q$ une puissance de $p$ et on rappelle quelques notations de 
\cite{boyer-invent2} sur les reprÈsentations admissibles de $GL_n(K)$ ‡ coefficients dans 
$\overline \Qm_l$ o˘ $l$ un nombre premier distinct de $p$.

\begin{nota}
Une racine carrÈe $q^{\frac{1}{2}}$ de $q$ dans $\overline \Qm_l$ Ètant fixÈe,
pour $k \in \frac{1}{2} \Zm$, nous noterons
$\pi\{ k \}$ la reprÈsentation tordue de $\pi$ de sorte que l'action d'un ÈlÈment $g \in GL_n(K)$ est donnÈe par $\pi(g) \nu(g)^k$ avec
$\nu: g \in GL_n(K) \mapsto q^{-\val (\det g)}$.
\end{nota}

\begin{defi}
Pour $(\pi_1,V_1)$ et $(\pi_2,V_2)$ des $R$-reprÈsentations de respectivement 
$GL_{n_1}(K)$ et $GL_{n_2}(K)$, et $P_{n_1,n_2}$ le parabolique standard de 
$GL_{n_1+n_2}$ de Levi $M=GL_{n_1} \times GL_{n_2}$, on notera
$$\pi_1 \times \pi_2:=\ind_{P(K)}^{GL_{n_1+n_2}(K)} \pi_1\{ n_2/2 \} \otimes \pi_2 \{ -n_1/2\}.$$
\end{defi}

On rappelle qu'une reprÈsentation irrÈductible admissible $\pi$ de $GL_n(K)$ est 
dite \textit{cuspidale} si elle n'est pas un sous-quotient d'une induite parabolique propre.
Pour $g$ un diviseur de $d=sg$ et $\pi$ une reprÈsentation cuspidale
irrÈductible de $GL_g(K)$, l'induite parabolique
$$\pi\{ \frac{1-s}{2} \} \times \pi\{\frac{3-s}{2} \} \times \cdots \times \pi\{ \frac{s-1}{2} \}$$
possËde un unique quotient (une unique sous-reprÈsentation) 
irrÈductible notÈ $\st_s(\pi)$ (resp. $\speh_s(\pi)$).
D'aprËs \cite{zelevinski2} 2.10, l'induite parabolique
$\st_{s-t}(\pi \{ - \frac{t}{2} \} ) \times \speh_{t}(\pi \{ \frac{s-t}{2} \} )$
admet une unique sous-reprÈsentation irrÈductible que l'on note $LT_\pi(s,t)$.
Afin d'Èviter d'avoir ‡ Ècrire systÈmatiquement toutes ces torsions, on introduit la notation suivante.

\begin{nota} Un entier $g \geq 1$ Ètant fixÈ, pour $\pi_1$ et $\pi_2$ des reprÈsentations de respectivement $GL_{t_1g}(K)$ et $GL_{t_2g}(K)$, on notera
$$\pi_1 \overrightarrow{\times} \pi_2 =\pi_1 \{ -\frac{t_2}{2} \} \times \pi_2 \{ \frac{t_1}{2} \}.$$
\end{nota}

\subsection{VariÈtÈs de Shimura unitaires simples}
\label{para-shimura}

Soit $F=F^+ E$ un corps CM avec $E/\Qm$ quadratique imaginaire pure, dont on fixe un plongement rÈel
$\tau:F^+ \hookrightarrow \Rm$. Dans \cite{h-t}, les auteurs justifient l'existence d'un groupe unitaire $G_\tau$
vÈrifiant les points suivants:
\begin{itemize}
\item $G_\tau(\Rm) \simeq U(1,d-1) \times U(0,d)^{r-1}$;

\item $G_\tau(\Qm_p) \simeq (\Qm_p)^\times \times \prod_{i=1}^r (B_{v_i}^{op})^\times$ o˘ $v=v_1,v_2,\cdots,v_r$ sont les
places de $F$ au dessus de la place $u$ de $E$ telle que $p=u \lexp c u$ et o˘ $B$ est une algËbre ‡ division centrale sur $F$
de dimension $d^2$ vÈrifiant certaines propriÈtÈs, cf. \cite{h-t}, dont en particulier d'etre soit dÈcomposÈe soit une algËbre ‡
division en toute place et dÈcomposÈe ‡ la place $v$. 
\end{itemize}

Pour tout sous-groupe compact $U^p$ de $G_\tau(\Am^{\oo,p})$ et $m=(m_1,\cdots,m_r) \in \Zm_{\geq 0}^r$, on pose
$$U^p(m)=U^p \times \Zm_p^\times \times \prod_{i=1}^r \ker ( \OC_{B_{v_i}}^\times \longto
(\OC_{B_{v_i}}/v_i^{m_i})^\times )$$

\begin{defi}
Pour $U^p$ \og assez petit \fg{}\footnote{tel qu'il existe une place $x$ pour laquelle la projection de $U^p$ sur $G(\Qm_x)$ ne contienne
aucun ÈlÈment d'ordre fini autre que l'identitÈ, cf. \cite{h-t} bas de la page 90} soit
$X_{U^p(m)}$ \og la variÈtÈ de Shimura associÈe ‡ $G$\fg{} construite dans \cite{h-t}.
On notera $\IC$ l'ensemble de ces $U^p$.
\end{defi}

On note $\IC$ l'ensemble des sous-groupes compacts ouverts \og assez petits \fg{} 
de $\Gm$, de la forme $U^p(m)$ et donc muni d'une application
$m_1:\IC \longto \Nm$.

Pour tout $I \in \IC$, $X_{I}$ est un schÈma projectif sur $\spec \OC_v$; les
$(X_{I})_{I \in \IC}$ forment un systËme projectif notÈ simplement $X_\IC$ muni
d'une action par correspondance de $G_\tau(\Am^\oo)$,
o˘ les morphismes de restriction du niveau $g_{J,I}$ sont finis et plats et mÍme Ètale si 
$m_1(I)=m_1(J)$.

\begin{defi} (cf. \cite{boyer-invent2} \S 1.3)
Pour tout $1 \leq h \leq d$, $X_{I,\bar s}^{\geq h}$ (resp. $X_{I,\bar s}^{=h}$)
dÈsigne la strate fermÈe (resp. ouverte) de Newton de hauteur $h$ de la filbre spÈciale
gÈomÈtrieque $X_{I,\bar s}$ de $X_I$, i.e. le sous-schÈma dont la
partie connexe du groupe de Barsotti-Tate en chacun de ses points gÈomÈtriques
est de rang $\geq h$ (resp. Ègal ‡ $h$).
\end{defi}

\begin{prop} (cf. \cite{ito2})
Les $X_{I,\bar s}^{=h}$ sont affines de pure dimension $d-h$.
\end{prop}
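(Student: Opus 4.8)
The Newton strata $X_{I,\bar s}^{=h}$ are affine of pure dimension $d-h$.

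The plan is to treat the two assertions separately: the purity of dimension by a local deformation-theoretic computation, and the affineness by a Hasse-invariant argument, the latter being essentially the content imported from \cite{ito2}.

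For the dimension, I would argue locally. Fix a geometric point $x$ of $X_{I,\bar s}^{=h}$ and let $\GF_x$ be the divisible $\OC_{B_{v_1}}$-module at $v=v_1$ attached to $x$; by construction its connected part $\GF_x^0$ is a one-dimensional formal group of height $h$ over $\overline{\mathbb{F}}_p$, and $\GF_x=\GF_x^0\times\GF_x^{\mathrm{et}}$ with $\GF_x^{\mathrm{et}}$ étale of height $d-h$. Since $X_I$ is a PEL moduli problem, the Serre--Tate theorem together with the Grothendieck--Messing / Rapoport--Zink description of the deformation functor identifies the complete local ring of $X_{I,\bar s}$ at $x$ with an equicharacteristic universal deformation ring, which is regular of dimension $d-1$. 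Inside it, the condition that the connected part of the deformed module still have height $\geq h$ is cut out by $h-1$ equations — the partial Hasse invariants of \cite{ito2} — which form a regular sequence there; hence $X_{I,\bar s}^{\geq h}$ is locally a complete intersection of dimension $(d-1)-(h-1)=d-h$, and the open subscheme $X_{I,\bar s}^{=h}$ has the same dimension (Harris--Taylor, \cite{h-t} Ch.~III, show moreover that it is smooth). In particular for $h=d$ the stratum is a finite set of points, consistent with $d-h=0$.

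For the affineness, I would use the Hodge line bundle. The determinant $\det\omega$ of the Hodge bundle of the universal abelian scheme is relatively ample on $X_I$ over $\spec\OC_v$ — these Shimura varieties being proper, there is no boundary to worry about — hence $\det\omega$ is ample on the projective $\overline{\mathbb{F}}_p$-scheme $X_{I,\bar s}$ and on each of its closed subschemes, in particular on every closed stratum $X_{I,\bar s}^{\geq h}$. The partial Hasse invariants of \cite{ito2} provide, for each $1\leq h\leq d$, an integer $N_h\geq 1$ and a global section $\mathrm{Ha}_h$ of $(\det\omega)^{\otimes N_h}$ over $X_{I,\bar s}^{\geq h-1}$ whose zero locus is exactly $X_{I,\bar s}^{\geq h}$. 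Granting this, one sees by induction that each $X_{I,\bar s}^{\geq h}$ is closed in $X_{I,\bar s}$, hence projective, and
$$X_{I,\bar s}^{=h}=X_{I,\bar s}^{\geq h}\setminus X_{I,\bar s}^{\geq h+1}=\bigl\{\, \mathrm{Ha}_{h+1}|_{X_{I,\bar s}^{\geq h}}\neq 0 \,\bigr\}$$
is the non-vanishing locus of a section of the ample line bundle $(\det\omega)^{\otimes N_{h+1}}$ on the projective scheme $X_{I,\bar s}^{\geq h}$; such a locus is affine, and $\mathrm{Ha}_{h+1}$ does not vanish identically on $X_{I,\bar s}^{\geq h}$ since the stratum $X_{I,\bar s}^{=h}$ is non-empty. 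This gives the claim.

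The hard part is the input imported from \cite{ito2}: the actual construction of the partial Hasse invariants $\mathrm{Ha}_h$ — realizing $X_{I,\bar s}^{\geq h}$ scheme-theoretically as a degeneracy locus for the relevant piece of Frobenius on the sheaf of invariant differentials of the connected part of the universal divisible module — and the verification both that they are cut out by sections of a power of the ample bundle $\det\omega$ and that they meet transversally (so that the strata have the expected codimension). Once these invariants are available, the dimension follows from the regular-sequence property and the smoothness computation of \cite{h-t}, while the affineness is then the elementary fact that the complement of the support of an effective ample Cartier divisor on a projective scheme is affine.
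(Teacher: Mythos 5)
Your proposal is correct and matches the approach of the cited source: the paper itself gives no proof of this proposition, deferring it entirely to Ito \cite{ito2}, and Ito's argument is precisely the Hasse-invariant one you sketch — each closed Newton stratum $X^{\geq h}_{I,\bar s}$ is realized as the zero locus on $X^{\geq h-1}_{I,\bar s}$ of a section of a power of the ample Hodge bundle $\det\omega$, so the open strata are complements of ample effective divisors on projective schemes and hence affine. The equidimensionality and smoothness of the strata are, as you note, already in Harris--Taylor via the Serre--Tate/Drinfeld deformation-theoretic computation.
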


\rem ces $X^{\geq h}_{\IC,\bar s}$ munissent $X_{\IC,\bar s}$ d'une stratification $\SF$
dite de Newton qui sera la seule considÈrÈe de sorte que l'on fera disparaitre le symbole
$\SF$ de toutes les notations.

\begin{prop} (cf. \cite{h-t} p.116)
Pour tout $1 \leq h < d$, les strates $X_{I,\bar s}^{=h}$ sont gÈomÈtriquement induites sous
l'action du parabolique $P_{h,d-h}(\OC_v)$, au sens o˘
il existe un sous-schÈma fermÈ $X_{I,\bar s,1}^{=h}$ tel que:
$$X_{I,\bar s}^{=h} \simeq X_{I,\bar s,1}^{=h} \times_{P_{h,d-h}(\OC_v/v^{m_1(I)})} 
GL_d(\OC_v/v^{m_1(I)}).$$
\end{prop}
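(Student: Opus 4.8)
The plan is to derive this from the moduli description of $X_I$. Recall that over $X_I$ one has, by Morita descent from the $\OC_{B_v}$-module attached to the universal abelian scheme (here $B$ is split at $v$, so $\OC_{B_v}\simeq M_d(\OC_v)$), a universal $p$-divisible $\OC_v$-module $\GC$ of height $d$ and dimension $1$, and that the level structure at $v$ of level $m:=m_1(I)$ is a Drinfeld level-$m$ structure, i.e. an $\OC_v$-linear map $\iota:(\OC_v/v^m)^d \longrightarrow \GC[v^m]$ with standard basis $e_1,\dots,e_d$ whose image is a full set of sections of $\GC[v^m]$; the group $GL_d(\OC_v/v^m)=(\OC_{B_v}/v^m)^\times$ acts on $X_{I,\bar s}$ by precomposition on $\iota$. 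First I would invoke the very definition of the Newton stratum, together with \cite{h-t}, to obtain that over the reduced stratum $X_{I,\bar s}^{=h}$ the connected part of $\GC$ has height $h$ at every geometric point and is in fact an honest $v$-divisible subgroup, so that $\GC$ fits in a canonical exact sequence $0 \to \GC^{\mathrm c} \to \GC \to \GC^{\mathrm e} \to 0$ with $\GC^{\mathrm c}$ a formal $\OC_v$-module of height $h$ and $\GC^{\mathrm e}$ an \'etale $\OC_v$-divisible module of height $d-h$; in particular $\GC^{\mathrm c}[v^m]$ is infinitesimal of order $q^{hm}$ and $\GC^{\mathrm e}[v^m]$ is finite \'etale of order $q^{(d-h)m}$.

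Next I would define $X_{I,\bar s,1}^{=h}\subset X_{I,\bar s}^{=h}$ to be the closed subscheme on which $\iota$ is adapted to the standard flag of type $(h,d-h)$, i.e. on which $\iota(\langle e_1,\dots,e_h\rangle)\subseteq \GC^{\mathrm c}[v^m]$; this is a closed condition, being the vanishing of the finite morphism $\langle e_1,\dots,e_h\rangle \to \GC^{\mathrm e}[v^m]$ obtained by composing with the projection. Over $X_{I,\bar s,1}^{=h}$ the induced map $\langle e_1,\dots,e_h\rangle \to \GC^{\mathrm c}[v^m]$ is then automatically a Drinfeld level structure on $\GC^{\mathrm c}$ and $\langle e_{h+1},\dots,e_d\rangle \to \GC^{\mathrm e}[v^m]$ an isomorphism, by counting orders. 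The elementary point is that the subgroup of $GL_d(\OC_v/v^m)$ stabilising this condition is exactly the stabiliser of the standard flag, namely the parabolic $P_{h,d-h}(\OC_v/v^m)$.

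The core of the argument is then to show that the orbit map $X_{I,\bar s,1}^{=h}\times GL_d(\OC_v/v^m) \to X_{I,\bar s}^{=h}$ is surjective \'etale-locally on the target and rigid modulo $P_{h,d-h}(\OC_v/v^m)$. For surjectivity I would compose $\iota$ with $\GC[v^m] \to \GC^{\mathrm e}[v^m]$ to get $\bar\iota$; since $\GC^{\mathrm e}[v^m]$ is \'etale, $\bar\iota$ carries a full set of sections onto it, so, \'etale-locally on $X_{I,\bar s}^{=h}$ where $\GC^{\mathrm e}[v^m]$ becomes the constant group $(\OC_v/v^m)^{d-h}$, $\bar\iota$ is a surjection of free $\OC_v/v^m$-modules $(\OC_v/v^m)^d \twoheadrightarrow (\OC_v/v^m)^{d-h}$; as $\OC_v/v^m$ is local, $\ker\bar\iota$ is free of rank $h$ and a direct summand, whence some $g\in GL_d(\OC_v/v^m)$ carries $\langle e_1,\dots,e_h\rangle$ onto it and $g\cdot\iota$ lies in $X_{I,\bar s,1}^{=h}$. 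For rigidity, $\iota^{-1}(\GC^{\mathrm c}[v^m])=\ker\bar\iota$ is a flag of type $(h,d-h)$ and so determines the class of $g$ in $GL_d(\OC_v/v^m)/P_{h,d-h}(\OC_v/v^m)$, and with that flag fixed the remaining data is uniquely pinned down; thus $(x,g)$ and $(x',g')$ have the same image precisely when $g'=gp$ and $x'=p^{-1}\cdot x$ for a unique $p\in P_{h,d-h}(\OC_v/v^m)$. Since $P_{h,d-h}(\OC_v/v^m)$ acts freely on the $GL_d(\OC_v/v^m)$-factor, the \emph{contracted product} $X_{I,\bar s,1}^{=h}\times_{P_{h,d-h}(\OC_v/v^m)}GL_d(\OC_v/v^m)$ is a scheme, finite over $X_{I,\bar s}^{=h}$, and the orbit map induces from it a bijective morphism to $X_{I,\bar s}^{=h}$ which, as in \cite{h-t}, one verifies to be an isomorphism.

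The hard part is the very first step, and it is what genuinely rests on \cite{h-t}: that the Newton stratum $X_{I,\bar s}^{=h}$ is exactly the locus of constant connected height $h$ and that there the connected part $\GC^{\mathrm c}$ is an honest $v$-divisible subgroup with \'etale quotient of height $d-h$. Once this, and the routine bookkeeping of Drinfeld level structures relative to the infinitesimal group $\GC^{\mathrm c}[v^m]$, are granted, the contracted-product description is essentially formal.
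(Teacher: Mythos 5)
Your argument is correct and reproduces the proof from \cite{h-t} p.~116, which the paper cites without reproving: the connected--\'etale sequence of the universal $v$-divisible $\OC_v$-module over the stratum, the closed locus where the Drinfeld level structure is adapted to the standard flag of type $(h,d-h)$, and the contracted product over the parabolic are precisely the three ingredients of Harris--Taylor's argument. Your sketch is a filled-in version of that reference, not a different route.
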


\begin{nota}
On notera $X_{I,\bar s,1}^{\geq h}$ l'adhÈrence de $X_{I,\bar s,1}^{=h}$ dans
$X_{I,\bar s}^{\geq h}$.
\end{nota}

\rem l'action de $P_{h,d-h}(F_v)$ sur $X_{\IC,\bar s,1}^{=h}$ 
se factorise par l'application $\left (
\begin{array}{cc} g_v^c & * \\ 0 & g_v^{et} \end{array} \right ) \mapsto (v(\det
g_v^c),g_v^{et})$. En outre l'action d'un ÈlÈment $w_v \in W_v$ est donnÈe par celle 
de $-\deg(w_v)$ sur le facteur $\Zm$ ci-dessus, o˘ $\deg$ est la composÈe du
caractËre non ramifiÈ de $W_v$, qui envoie les Frobenius gÈomÈtriques sur les uniformisantes, avec la valuation $v$ de $F_v$.

\begin{nota}
On notera comme prÈcÈdemment
$$i_{h}:X_{\IC,\bar s}^{\geq h} \hookrightarrow X_{\IC,\bar s}=X_{\IC,\bar s}^{\geq 1}, \quad
j^{\geq h}:X_{\IC,\bar s}^{=h} \hookrightarrow X_{\IC,\bar s}^{\geq h}, \quad j^{\geq h}_{1}:X_{\IC,\bar s,1}^{=h}
\hookrightarrow X_{\IC,\bar s,1}^{\geq h} \hbox{ et } j^{=h}:=i_h \circ j^{\geq h}.$$
\end{nota}

\subsection{SystËmes locaux d'Harris-Taylor}
\label{para-systeme-locaux}

Soit $D_{v,d}$ l'algËbre ‡ division centrale sur $F_v$ d'invariant $1/d$
et d'ordre maximal $\DC_{v,d}$. ¿ toute reprÈsentation irrÈductible
admissible $\tau_v$ de $\DC_{v,h}^\times$, Harris et Taylor associent 
un systËme local $\FC_{\tau_v,\IC,1}$ sur $X_{\IC,\bar s,1}^{=h}$ muni d'une action de
$G(\Am^{\oo,p}) \times \Qm_p^\times \times P_{h,d-h}(F_v) \times 
\prod_{i=2}^r (B_{v_i}^{op})^\times  \times \Zm$
qui d'aprËs \cite{h-t} p.136, se factorise par $G^{(h)}(\Am^\oo)/\DC_{F_v,h}^\times$ via
\addtocounter{smfthm}{1}
\begin{equation}  \phantomsection \label{eq-action-tordue}
(g^{\oo,p},g_{p,0},c,g_v,g_{v_i},k) \mapsto (g^{p,\oo},g_{p,0}q^{k-v (\det g_v^c)}, 
g_v^{et},g_{v_i}, \delta).
\end{equation}
o˘ $G^{(h)}(\Am^\oo):=G(\Am^{\oo,p}) \times \Qm_p^\times \times GL_{d-h}(F_v) \times
\prod_{i=2}^r (B_{v_i}^{op})^\times \times D_{F_v,h}^\times$, et o˘
$g_v=\left ( \begin{array}{cc} g_v^c & * \\ 0 & g_v^{et} \end{array} \right )$ et
$\delta \in D_{v,h}^\times$ sont tels que $v(\rn (\delta))=k+v(\det g_v^c)$.
On note $\FC_{\tau_v,\IC}$ le faisceau sur $X_{\IC,\bar s}^{=h}$ induit
$$\FC_{\tau_v,\IC}:= \FC_{\tau_v,\IC,1} \times_{P_{h,d-h}(F_v)} GL_d(F_v).$$

\begin{nota}
Toute reprÈsentation irrÈductible $\tau_v$ de $D_{v,h}^\times$ 
est l'image par la correspondance de Jacquet-Langlands locale d'une reprÈsentation
de la forme $\st_t(\pi_v)$ o˘ $\pi_v$ est une reprÈsentation irrÈductible admissible
cuspidale de $GL_g(F_v)$ avec $h=tg$. On la notera $\pi_v[t]_D$ et pour
$(\pi_v[t]_D)_{|\DC_{v,h}^\times}=\bigoplus_{i=1}^{e_{\pi_v}}
\rho_{v,i}$ avec $\rho_{v,i}$ irrÈductible, soient
$$\FC(\pi_v,t)_1=\bigoplus_{i=1}^{e_{\pi_v}}\FC_{\rho_{v,i},\IC,1} \quad \hbox{ et } \quad
\FC(\pi_v,t):= \FC(\pi_v,t)_1 \times_{P_{tg,d-tg}(F_v)} GL_d(F_v)$$
le faisceau sur $X_{\IC,\bar s,1}^{=tg}$ et sa version induite sur $X_{\IC,\bar s}^{=tg}$.
\end{nota}

\begin{defi} \phantomsection \label{defi-fht}
(\textbf{SystËmes locaux dits d'Harris-Taylor})
Pour $\Pi_t$ une reprÈsentation de $GL_{tg}(F_v)$, on note 
$HT(\pi_v,\Pi_t):=\FC(\pi_v,t)[d-tg] \otimes \Xi^{\frac{tg-d}{2}} \otimes \Pi_t,$
o˘ l'action se dÈduit par induction par celle de
$(g^p,g_{p,0},c,g_v^{et},g_{v_i},g_v^c,\sigma)$ dans
$$G(\Am^{\oo,p}) \times \Qm_p^\times
\times (\Zm \times GL_{d-tg}(F_v))^+ \times \prod_{i=2}^r (B_{v_i}^{op})^\times \times GL_{tg}(F_v) \times W_v$$
sur le faisceau non induit, o˘ le radical unipotent de $P_{tg,d}(F_v)$ agit trivialement,
$g_v^c$ agit sur $\Pi_t$ et pour $\gamma \in D_{v,tg}^\times/\DC_{v,tg}^\times$ 
tel que $v(\rn \gamma)=v(\det g_v^c) -\deg \sigma$,
$$(g^p,g_{p,0}q^{-c+ v(\det g_v^c) - \deg \sigma}, \gamma,g^{et},g_{v_i}) \in 
G^{(tg)}(\Am^\oo)/\DC_{v,tg}^\times$$ 
agit sur $\FC(t,\pi_v)$.
\end{defi}

\rem les $\FC(\pi_v,t)$ n'Ètant pas irrÈductibles,
$j^{\geq tg}_{!*} HT(\pi_v,\Pi_t)$ n'est pas irrÈductible. 

Soit alors $\pi_v$ une reprÈsentation irrÈductible admissible cuspidale de $GL_g(F_v)$
et on note $s=\lfloor \frac{d}{g} \rfloor$. Pour tout $1 \leq t \leq s$, considÈrons
la filtration de stratification du \S \ref{para-def-filtration0}
$$0=\Fil^{-d}_*(\pi_v,\Pi_t) \subset \Fil^{1-d}_*(\pi_v,\Pi_t) \subset \cdots \subset
\Fil^0_*(\pi_v,\Pi_t)=j^{=tg}_{!} HT(\pi_v, \Pi_t).$$

\begin{prop} 
La filtration $\Fil^\bullet_*(\pi_v,\Pi_t)$ est adaptÈe ‡ la stratification de Newton au sens
de la dÈfinition \ref{defi-adpate}. 
Ses graduÈs $\gr^{-r}_*(\pi_v,\Pi_t):=\Fil^{-r}_*(\pi_v,\Pi_t)/\Fil^{-r-1}_*(\pi_v,\Pi_t)$ sont tous
nuls sauf pour $r=kg-1$ pour $t \leq k \leq s$ auquel cas 
$$\gr^{1-kg}_*(\pi_v,\Pi_t) \simeq  j^{= kg}_{!*} 
HT (\pi_v,\Pi_t \overrightarrow{\times} \st_{k-t}(\pi_v)) \otimes \Xi^{(t-k)/2}.$$
\end{prop}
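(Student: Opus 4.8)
L'id\'ee est de calculer la filtration $\Fil^\bullet_*(\pi_v,\Pi_t)$ strate par strate: on se ram\`ene \`a chaque \'etape \`a la situation de recollement du \S\ref{para-filtration-recollement}, et on y utilise le calcul, effectu\'e dans \cite{boyer-invent2}, de la restriction des faisceaux d'Harris--Taylor aux strates de Newton plus profondes, ainsi que la th\'eorie des repr\'esentations de $GL_n(F_v)$ rappel\'ee au \S\ref{para-rappel-rep}.

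Traitons d'abord la partie triviale. Comme $L:=j^{=tg}_! HT(\pi_v,\Pi_t)=i_{tg,*}\,j^{\geq tg}_! HT(\pi_v,\Pi_t)$ est \`a support dans $X^{\geq tg}_{\IC,\bar s}$, on a $j^{1\leq r,*}L=0$, donc $\CoFil_{*,r}(L)=0$ et $\Fil^{-r}_*(L)=L$, pour tout $r<tg$; en particulier $\gr^{-r}_*(L)=0$ d\`es que $r<tg-1$. Pour $r=tg-1$ on obtient $\gr^{1-tg}_*(L)=L/\Fil^{-tg}_*(L)=\CoFil_{*,tg}(L)=\coim_\FC\bigl(\can_{*,L}\colon L\to\lexp p j^{=tg}_* HT(\pi_v,\Pi_t)\bigr)$; comme sur $\overline\Qm_l$ il n'y a pas de torsion, coimages et images se confondent, de sorte que ce gradu\'e est l'image du morphisme canonique $L\to\lexp p j^{=tg}_* HT(\pi_v,\Pi_t)$, c'est-\`a-dire $j^{=tg}_{!*} HT(\pi_v,\Pi_t)$. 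C'est bien le gradu\'e annonc\'e pour $k=t$ ($\st_0(\pi_v)$ est triviale, $\Pi_t\overrightarrow{\times}\st_0(\pi_v)=\Pi_t$ et $\Xi^0=1$), et cela identifie au passage $\Fil^{-tg}_*(L)=\ker_\FC(\can_{*,L})=i_{tg,*}\ker_\FC\bigl(j^{\geq tg}_! HT(\pi_v,\Pi_t)\to j^{\geq tg}_{!*} HT(\pi_v,\Pi_t)\bigr)$, faisceau pervers \`a support dans $X^{\geq tg+1}_{\IC,\bar s}$.

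Venons-en \`a la r\'ecurrence. Pour $r\geq tg$ on applique la construction it\'erative de la $*$-cofiltration du \S\ref{para-def-filtration0}: en posant $K_{r+1}:=\ker_\FC\bigl(L\tarrow\CoFil_{*,r}(L)\bigr)=\Fil^{-r}_*(L)$, \`a support dans $X^{\geq r+1}_{\IC,\bar s}$, le gradu\'e $\gr^{-r}_*(L)$ vaut $i_{r+1,*}\CoFil_{X^{=r+1},*,0}(K_{r+1})=i_{r+1,*}\coim_\FC(\can_{*,K_{r+1}})$, de sorte que tout est gouvern\'e par la restriction $i_{r+1}^*K_{r+1}$ \`a la strate $X^{=r+1}_{\IC,\bar s}$. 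Or $K_{r+1}=\ker_\FC\bigl(L\to\lexp p j^{1\leq r}_* j^{1\leq r,*}L\bigr)$, et $i_{r+1}^*K_{r+1}$ ne d\'epend donc que de la restriction, au voisinage de $X^{=r+1}_{\IC,\bar s}$, des prolongements $*$-partiels successifs de $HT(\pi_v,\Pi_t)$; ces derniers sont calcul\'es dans \cite{boyer-invent2}: puisque chaque strate $X^{=h}_{\IC,\bar s}$ est g\'eom\'etriquement induite depuis $X^{=h}_{\IC,\bar s,1}$ sous $P_{h,d-h}(\OC_v)$ et que les syst\`emes locaux $\FC(\pi_v,\cdot)$ portent l'action tordue \eqref{eq-action-tordue}, ces restrictions sont encore des faisceaux d'Harris--Taylor dont le coefficient s'obtient \`a partir de $\Pi_t$ par un produit $\overrightarrow{\times}$ contre une repr\'esentation de $GL_{r+1-tg}(F_v)$ fabriqu\'ee \`a partir de $\pi_v$. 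La cuspidalit\'e de $\pi_v$ (de $GL_g(F_v)$) force la nullit\'e de ce coefficient, donc de $i_{r+1}^*K_{r+1}$, d\`es que $g\nmid (r+1)$; ainsi $\gr^{-r}_*(L)=0$ sauf si $r+1=kg$ pour un entier $k$. Dans ce cas, la combinatoire de Zelevinsky du \S\ref{para-rappel-rep} (notamment $\st_1(\pi_v)=\pi_v$ et le comportement des modules de Steinberg et de Speh sous induction parabolique) identifie la restriction de $i_{r+1}^*K_{r+1}$ \`a $X^{=kg}_{\IC,\bar s}$ au faisceau $HT\bigl(\pi_v,\Pi_t\overrightarrow{\times}\st_{k-t}(\pi_v)\bigr)\otimes\Xi^{(t-k)/2}$; en lui appliquant $j^{=kg}_{!*}$ --- et en invoquant de nouveau l'absence de torsion sur $\overline\Qm_l$ pour que la coimage de $\can_{*}$ soit une v\'eritable extension interm\'ediaire --- on obtient $\gr^{1-kg}_*(\pi_v,\Pi_t)\simeq j^{=kg}_{!*} HT(\pi_v,\Pi_t\overrightarrow{\times}\st_{k-t}(\pi_v))\otimes\Xi^{(t-k)/2}$ pour $t\leq k\leq s$, l'intervalle \'etant born\'e par $s=\lfloor d/g\rfloor$ puisqu'il n'y a pas de strate $X^{=kg}$ lorsque $kg>d$.

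Enfin, chaque gradu\'e non nul \'etant de la forme $j^{=h}_{!*}$ d'un faisceau d'Harris--Taylor, la filtration est adapt\'ee \`a la stratification de Newton au sens de la d\'efinition \ref{defi-adpate}. Le point d\'elicat est l'\'etape de r\'ecurrence: d'une part la restriction explicite des (prolongements $*$-partiels des) faisceaux d'Harris--Taylor aux strates de Newton plus profondes, qui rel\`eve de la g\'eom\'etrie et de la th\'eorie des repr\'esentations de \cite{boyer-invent2}; d'autre part le suivi des torsions $\Xi^{(t-k)/2}$ et des produits $\overrightarrow{\times}$ it\'er\'es de modules de Steinberg et de Speh, n\'ecessaire pour v\'erifier que le coefficient se stabilise en $\Pi_t\overrightarrow{\times}\st_{k-t}(\pi_v)$ ind\'ependamment du chemin suivi \`a travers les strates. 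La r\'eduction \`a la situation de recollement \`a deux strates et la conclusion que la filtration est adapt\'ee en d\'ecoulent alors formellement, en n'utilisant que l'absence de torsion sur $\overline\Qm_l$.
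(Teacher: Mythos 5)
Your proposal takes a genuinely different route from the paper, and the key step is not justified as written.

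The paper's proof is a \emph{global} argument: since $L=j^{=tg}_!HT(\pi_v,\Pi_t)$ is an extension by zero, one has $\lexp p j^{1\leq r}_! j^{1\leq r,*}L=L$, so $\CoFil_{\SF,*,r}(L)=\coim(\can_{*,L})$ is literally the intermediate extension $j^{1\leq r}_{!*}j^{1\leq r,*}L$. Knowing the simple constituents $P_k$ of $L$ (proposition 4.3.1 and corollaire 5.4.1 de \cite{boyer-invent2}), the argument then pins down which $P_k$ survive in $\CoFil_{\SF,*,r}$ by a purity/socle argument: the $P_k$ are pure of weight $t-k$, a common constituent of maximal index $k_0$ must lie in the socle of $\CoFil_{\SF,*,r}$, that socle is $j^{1\leq r}_{!*}$ of the socle of $j^{1\leq r,*}L$ and therefore involves only $k\leq\lfloor r/g\rfloor$, so $k_0=\lfloor r/g\rfloor$. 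No restriction computations to deep strata are required, and in particular théorème~\ref{theo-boyer} is \emph{not} used.

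Your proof instead goes \emph{strata by strata}, computing $\gr^{-r}_*(L)=i_{r+1,*}\CoFil_{X^{=r+1},*,0}(K_{r+1})$ at each step. There are two genuine gaps. First, $K_{r+1}$ is no longer an extension by zero, so $\coim(\can_{*,K_{r+1}})=K_{r+1}/i_*\lexp p i^!K_{r+1}$ is not a priori an intermediate extension --- you need to rule out quotients of $K_{r+1}$ supported on the deeper closed stratum, which is part of what the proposition asserts and is precisely what the paper's socle argument handles. Second, you assert that $j^{\geq r+1,*}K_{r+1}$ is again an Harris--Taylor local system whose coefficient is $\Pi_t\overrightarrow{\times}\st_{k-t}(\pi_v)$, with the cuspidality of $\pi_v$ forcing vanishing when $g\nmid(r+1)$; but this is not established, and the only plausible source for it is théorème~\ref{theo-boyer} (the fibre computation of $\HC^i j^{\geq tg}_{!*}HT$), which the paper deliberately does \emph{not} invoke in this proof --- it is used only afterward, in the proof of proposition~\ref{prop-fp-K1}, and reproved in the appendix. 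Invoking it here would make the argument depend on a much deeper input than the paper needs, and as written you do not even make this dependence explicit, substituting vague references to \og la combinatoire de Zelevinsky\fg{} for the actual identification of the coefficient. To repair the proof you would need either to import théorème~\ref{theo-boyer} explicitly (and accept the heavier hypothesis), or to replace the restriction computation by the paper's purity/socle argument.
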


\begin{proof}
On rappelle que d'aprËs \cite{boyer-invent2} proposition 4.3.1 et corollaire 5.4.1,
pour tout $1 \leq t \leq s$, les constituants simples de $j^{= tg}_{!} HT(\pi_v,\Pi_{t})$ sont les 
$$P_k:= j^{= kg}_{!*} 
HT (\pi_v,\Pi_t \overrightarrow{\times} \st_{k-t}(\pi_v)) \otimes \Xi^{(t-k)/2},$$
pour $t \leq k \leq s$.
Pour tout $tg \leq  r \leq d$, l'image $\CoFil_{\SF,*,r}(\pi_v,\Pi_t)$ du morphisme
d'adjonction
$$j^{= tg}_{!} HT(\pi_v, \Pi_t) =\lexp p j^{1 \leq r}_!  j^{1 \leq r,*}
\bigl ( j^{= tg}_{!} HT(\pi_v, \Pi_t) \bigr )
\longrightarrow \lexp p j^{1†\leq r}_* j^{1 \leq r,*}
\bigl ( j^{= tg}_{!} HT(\pi_v, \Pi_t) \bigr )$$
est, par dÈfinition de la notion d'extension intermÈdaire, 
$j^{1 \leq r}_{!*} j^{1 \leq r,*} \bigl ( j^{= tg}_{!} HT(\pi_v, \Pi_t) \bigr )$.
Comme le noyau de ce morphisme d'adjonction est ‡ support dans 
$X^{\geq r+1}_{\IC,\bar s}$, on en dÈduit que l'image de $\CoFil_{\SF,*,r}(\pi_v,\Pi_t)$
dans le groupe de Grothendieck est supÈrieure ou Ègale ‡
$\sum_{k=t}^{\lfloor \frac{r}{g} \rfloor} P_k$ et l'ÈnoncÈ consiste ‡ montrer l'ÈgalitÈ. 
Soit alors $k_0$ maximal tel que $P_{k_0}$ et $\CoFil_{\SF,*,r}(\pi_v,\Pi_t)$ aient un
constituant $Z$ en commun lequel sera donc pur de poids $t-k_0$ puisque les $P_k$ 
sont purs de poids $t-k$. La filtration par les poids de $\CoFil_{\SF,*,r}(\pi_v,\Pi_t)$
nous donne alors que $Z$ est un facteur direct du socle de
$\CoFil_{\SF,*,r}(\pi_v,\Pi_t)$.
Or celui-ci est le $j^{1 \leq r}_{!*}$ du socle de $j^{1 \leq r,*} \bigl ( 
j^{= tg}_{!} HT(\pi_v, \Pi_t) \bigr )$, et donc somme directe de certains $P_k$
pour $1 \leq k \leq \lfloor \frac{r}{g} \rfloor$ de sorte que $k_0=\lfloor \frac{r}{g} \rfloor$
et $Z=P_{k_0}$.
\end{proof}

\rem le raisonnement prÈcÈdent se gÈnÈralise au cas d'un faisceau pervers dont les
constituants irrÈductibles sont ordonnÈs simultanÈment par la dimension de leur
support et leur poids, i.e. ceux dont la dimension du support est la plus grande sont 
ceux de plus haut poids. Dualement si ceux de plus haut poids sont ceux dont
la dimension du support est minimale, le raisonnement prÈcÈdent s'adapte aux
$\Fil^\bullet_!$. 

\begin{prop} \label{prop-fp-K1}
La filtration de stratification $\Fil^\bullet_! \bigl ( \Fil^{-tg}_*(\pi_v,\Pi_t) \bigr )$ de
$$\Fil^{-tg}_*(\pi_v,\Pi_t)= \ker_\FC \bigl ( 
j^{= tg}_! HT(\pi_v,\Pi_t) \twoheadrightarrow  j^{= tg}_{!*} HT(\pi_v,\Pi_t) \bigr )$$
est triviale, i.e.
tous ses graduÈs sont nuls sauf $\gr^{(t+1)g}_! \bigl ( \Fil^{-tg}_*(\pi_v,\Pi_t) \bigr ) \simeq
\Fil^{-tg}_*(\pi_v,\Pi_t)$.
\end{prop}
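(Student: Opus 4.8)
The plan is to identify $K:=\Fil^{-tg}_*(\pi_v,\Pi_t)$ by means of the preceding proposition, then to reduce the assertion by a purely formal argument to the vanishing of a single perverse stalk‑cohomology sheaf of the intermediate extension $j^{=tg}_{!*}HT(\pi_v,\Pi_t)$, a vanishing which is read off from \cite{boyer-invent2}.

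First, the preceding proposition identifies $K$ with $\Fil^{1-(t+1)g}_*(\pi_v,\Pi_t)$; hence $j^{=tg}_! HT(\pi_v,\Pi_t)/K\simeq j^{=tg}_{!*}HT(\pi_v,\Pi_t)$ and the composition factors of $K$ are exactly the $P_k$ for $t+1\leq k\leq s$, so that $K$ is supported on $X^{\geq (t+1)g}_{\IC,\bar s}$. For $r<(t+1)g$ one has $X^{\geq (t+1)g}_{\IC,\bar s}\subset X^{\geq r+1}_{\IC,\bar s}$, whence $j^{1\leq r,*}K=0$ and $\Fil^r_!(K)=0$; since the $\FC$-filtration $\Fil^\bullet_!(K)$ is increasing with last term $K$, it suffices to prove that $\Fil^{(t+1)g}_!(K)=K$, and then all graded pieces other than $\grr^{(t+1)g}_!(K)\simeq K$ vanish.

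Write $\iota\colon X^{\geq (t+2)g}_{\IC,\bar s}\hookrightarrow X_{\IC,\bar s}$ for the closed immersion complementary to the open immersion $j^{1\leq (t+1)g}\colon X^{1\leq (t+1)g}_{\IC,\bar s}=X_{\IC,\bar s}-X^{\geq (t+2)g}_{\IC,\bar s}\hookrightarrow X_{\IC,\bar s}$. As $j^{1\leq (t+1)g}$ is an open immersion the sheaf $j^{1\leq (t+1)g,*}K$ is perverse, and $j^{1\leq (t+1)g}_!$ being right $t$-exact one gets $\lexp p\hi^1\bigl(j^{1\leq (t+1)g}_! j^{1\leq (t+1)g,*}K\bigr)=0$; applying $\lexp p\hi^0$ to the distinguished triangle $j^{1\leq (t+1)g}_! j^{1\leq (t+1)g,*}K\to K\to\iota_*\iota^*K\leadsto$ then shows that $K\to\iota_*\lexp p\hi^0(\iota^*K)$ is an epimorphism and that $\Fil^{(t+1)g}_!(K)=\ker\bigl(K\to\iota_*\lexp p\hi^0(\iota^*K)\bigr)$, so that $\Fil^{(t+1)g}_!(K)=K$ is equivalent to $\lexp p\hi^0(\iota^*K)=0$. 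Now $\iota^* j^{=tg}_! HT(\pi_v,\Pi_t)=0$, the strata $X^{=tg}_{\IC,\bar s}$ and $X^{\geq (t+2)g}_{\IC,\bar s}$ being disjoint, so applying $\iota^*$ to $0\to K\to j^{=tg}_! HT(\pi_v,\Pi_t)\to j^{=tg}_{!*}HT(\pi_v,\Pi_t)\to 0$ yields $\iota^*K\simeq\iota^* j^{=tg}_{!*}HT(\pi_v,\Pi_t)[-1]$, and everything comes down to
\[
\lexp p\hi^{-1}\bigl(\iota^* j^{=tg}_{!*}HT(\pi_v,\Pi_t)\bigr)=0.
\]

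The heart of the matter, and the main difficulty, is this last vanishing, which I would obtain from the description of the perverse stalks of Harris--Taylor intermediate extensions in \cite{boyer-invent2}. Factor $\iota$ as $i_{(t+1)g}\circ\iota'$ with $\iota'\colon X^{\geq (t+2)g}_{\IC,\bar s}\hookrightarrow X^{\geq (t+1)g}_{\IC,\bar s}$. The support condition for $j^{=tg}_{!*}$ gives $\lexp p\hi^0\bigl(i_{(t+1)g}^* j^{=tg}_{!*}HT(\pi_v,\Pi_t)\bigr)=0$, and for each $a\geq 1$ the sheaf $\lexp p\hi^{-a}\bigl(i_{(t+1)g}^* j^{=tg}_{!*}HT(\pi_v,\Pi_t)\bigr)$ is, up to twist, an intermediate extension from the stratum $X^{=(t+a)g}_{\IC,\bar s}$: for $a\geq 2$ it is therefore already supported on $X^{\geq (t+2)g}_{\IC,\bar s}$, where $\iota'^*$ leaves it perverse, while for $a=1$ its restriction $\iota'^*$ to the deeper closed subset $X^{\geq (t+2)g}_{\IC,\bar s}$ again lies in $\lexp p D^{\leq -1}$. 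Running the perversity spectral sequence for $\iota^*=\iota'^*\circ i_{(t+1)g}^*$ applied to $j^{=tg}_{!*}HT(\pi_v,\Pi_t)$, the only term that can contribute to $\lexp p\hi^{-1}$ is the $\lexp p\hi^0$ of $\iota'^*\bigl(\lexp p\hi^{-1}(i_{(t+1)g}^* j^{=tg}_{!*}HT(\pi_v,\Pi_t))\bigr)$, which we have just seen is zero, and the proof is complete. In down‑to‑earth terms this amounts to saying that $K$ has no simple quotient supported on $X^{\geq (t+2)g}_{\IC,\bar s}$, and it is essentially the computation of $i^* j^{=tg}_{!*}HT(\pi_v,\Pi_t)$ carried out in \cite{boyer-invent2}, transported here to the $j_!$-side.
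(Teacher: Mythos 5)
Your plan diverges from the paper's argument, which runs a weight--filtration contradiction directly against Theorem~\ref{theo-boyer}: if the adjunction $j^{=(t+1)g}_!j^{=(t+1)g,*}K\to K$ were not surjective, a simple common constituent $Z$ of some $P_{k_0}$ ($k_0\geq t+2$) and of the cokernel would be a direct summand of $j^{1\leq k_0g,*}K$, giving a nonzero stalk of $\hi^{-\dim z}K$ at a generic point $z$ of $X^{=k_0g}$, hence a nonzero stalk of $\hi^{-\dim z-1}j^{\geq tg}_{!*}HT$, which contradicts the generic stalk vanishing of \ref{theo-boyer}. Your route is instead through the perverse truncation triangle and composite closed pullbacks. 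That is not in itself a problem, but as written it contains a concrete error and relies on a stronger input than the paper needs.

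The concrete error is the identification of the closed complement. By the paper's convention, $X^{1\leq h}:=X^{\geq 1}-X^{\geq h+1}$, so the closed complement of $j^{1\leq (t+1)g}$ is $X^{\geq (t+1)g+1}_{\IC,\bar s}$, \emph{not} $X^{\geq (t+2)g}_{\IC,\bar s}$ (these agree only when $g=1$). Your equality $X^{1\leq (t+1)g}=X-X^{\geq (t+2)g}$ is therefore false in general, and the recollement triangle $j^{1\leq(t+1)g}_!j^{1\leq(t+1)g,*}K\to K\to\iota_*\iota^*K\leadsto$ does not exist with your $\iota$. Without the correct complement, the equivalence ``$\Fil^{(t+1)g}_!(K)=K$ iff $\lexp p\hi^0(\iota^*K)=0$'' does not hold; proving the vanishing on the strictly smaller closed subset $X^{\geq(t+2)g}$ is a weaker statement than what is needed. (One can salvage this by taking $\iota=X^{\geq(t+1)g+1}\hookrightarrow X$ and $\iota'$ the complement of $X^{=(t+1)g}$ in $X^{\geq(t+1)g}$; the subsequent computation of the only contributing $E_1$-term is then cleaner and you do not even need to pass to $X^{\geq(t+2)g}$.)

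The second issue is that your endgame invokes, without precise reference, the assertion that $\lexp p\hi^{-a}\bigl(i_{(t+1)g}^*j^{=tg}_{!*}HT(\pi_v,\Pi_t)\bigr)$ is (up to twist) an intermediate extension from $X^{=(t+a)g}$. This is a genuinely stronger structural statement than Theorem~\ref{theo-boyer}, which only gives the stalks at \emph{generic} points, in the Grothendieck group; knowing the generic stalks does not by itself rule out a constituent entirely supported on a deeper closed subset, which is precisely what the support condition would need to forbid. In the paper, the contradiction argument is specifically crafted to need \emph{only} the generic stalk vanishing together with the trivial fact that $j^{=tg}_!HT$, being extension by zero, has zero stalk at any point outside $X^{=tg}$. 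If you do have a reference in \cite{boyer-invent2} for the full intermediate-extension structure of the perverse restriction, then with the corrected $\iota$ your argument closes and is arguably more conceptual; but as presented it leaves open both how to get from the correct closed complement $X^{\geq(t+1)g+1}$ to $X^{\geq(t+2)g}$, and how the generic-stalk theorem alone would give the intermediate-extension claim.
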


\begin{proof}
Il s'agit donc de montrer que le morphisme d'adjonction
$$j^{= (t+1)g}_! j^{= (t+1)g,*} \bigl ( \Fil^{-tg}_*(\pi_v,\Pi_t) \bigr ) \longrightarrow 
\Fil^{-tg}_*(\pi_v,\Pi_t)$$ 
est surjectif. Raisonnons par l'absurde et soit $k_0\geq t+2$ minimal tel que, 
avec les notations
prÈcÈdentes, $P_{k_0}$ et le conoyau de ce morphisme d'adjonction aient un constituant $Z$
en commun. Ainsi $Z$ est un facteur direct de $j^{1 \leq k_0g,*} \bigl ( \Fil^{-tg}_*(\pi_v,\Pi_t) 
\bigr )$, puisqu'il est, par construction, un quotient, et un sous-objet via la filtration par les
poids. Pour $z$ un point gÈnÈrique de $X^{=k_0g}_{\IC,\bar s}$, on en dÈduit donc
que la fibre en $z$ de $\hi^{-\dim z}  \bigl ( \Fil^{-tg}_*(\pi_v,\Pi_t) \bigr )$ est non nulle.
Comme la fibre en $z$ de $\hi^{-\dim z} j^{\geq tg}_! HT(\pi_v,\Pi_t)$ est nulle, on en
dÈduit alors que la fibre en $z$ de $\hi^{-\dim z-1} j^{\geq tg}_{!*} HT(\pi_v,\Pi_t)$ 
est non nulle ce qui, comme $k_0 \geq t+2$, contredit le rÈsultat principal de 
\cite{boyer-invent2} que nous rappelons au thÈorËme \ref{theo-boyer} ci-aprËs.
\end{proof}

\rem on pourrait montrer de mÍme que $\Fil^p_! (\Fil^q_*(\pi_v,\Pi_t))$ est soit nul
soit Ègal ‡ $\Fil^{q}_*(\pi_v,\Pi_t)$. 


\begin{theo}  \phantomsection\label{theo-boyer} (cf. \cite{boyer-invent2} thÈorËme 2.2.5) 
La fibre en un point gÈnÈrique de $X^{=h}_{\IC,\bar s}$ de $\hi^i j^{\geq tg}_{!*} HT(\pi_v,\Pi_t)$ est nulle sauf pour $(h,i)=\bigl ( (t+r)g,(t+r)g-d-r \bigr )$ avec
$0 \leq r \leq \lfloor \frac{d}{g} \rfloor-t$. Dans ce cas,
dans le groupe de Grothendieck des reprÈsentations de $GL_{(t+r)g}(F_v) \times \Zm$, elle est
un multiple de $\Pi_t \overrightarrow{\times} \speh_{r}(\pi_v) \otimes \Xi^{-\frac{(t+r)g-d-r}{2}}$.
\end{theo}

\rem au \S \ref{para-recurrent}, nous proposerons une dÈmonstration 
cohomologique de la proposition
prÈcÈdente ainsi qu'une nouvelle preuve du thÈorËme ci-dessus.

En itÈrant la proposition prÈcÈdente, dans la filtration exhaustive de stratification 
$$0=\Fill^{2^{d-1}}_!(\pi_v,\Pi_t) \subset \cdots \subset \Fill^0_!(\pi_v,\Pi_t) \subset \cdots 
\subset \Fill^{2^{d-1}}_!(\pi_v,\Pi_t)=j^{= tg}_{!} HT(\pi_v, \Pi_t),$$
l'ordre d'apparition des $P_k$ est aussi donnÈ par les poids. Les indices
prÈcis des graduÈs non triviaux Ètant plus complexes nous aurons besoin des notations 
suivantes.

\begin{nota}  \phantomsection \label{nota-gr}
Pour tout $h \geq 2$ on note $r_h=2^{d-2}+\cdots +2^{d-h}$ et on pose $r_1=0$.
Pour $k \geq 1$ on note $\delta_k(g)=\sum_{i=1}^{k} 2^{-ig}$ et on pose $\delta_0=0$.
\end{nota}

\begin{coro} \phantomsection \label{coro-fp-K1}
Les graduÈs $\grr^r_!(\pi_v,\Pi_t):=\Fill^r_!(\pi_v,\Pi_t)/\Fill^{r-1}_!(\pi_v,\Pi_t)$ sont tous nuls
sauf ceux de la forme $r=r_{tg}-2^{d-t}\delta_k(g)$ pour $k=0,\cdots,s-1$ auquel cas 
$$\grr^{r_{tg}-2^{d-t}\delta_k(g)}_!(\pi_v,\Pi_t) \simeq j^{= (t+k)g}_{!*} 
HT (\pi_v,\Pi_t \overrightarrow{\times} \st_k(\pi_v)) \otimes \Xi^{-k/2}.$$
\end{coro}

\rem en utilisant que l'extension intermÈdiaire est le top de l'extension par zÈro,
on notera que $\Fil^\bullet_*(\pi_v,\Pi_t)$ et $\Fill^\bullet_!(\pi_v,\Pi_t)$ coÔncident, ‡
la numÈrotation prËs, avec la filtration par les socles de $j^{=tg}_! HT(\pi_v,\Pi_t)$.

\subsection{Faisceau pervers de Hecke des cycles proches}
\label{para-psi0}

\begin{defi}
Pour tout $J \in \IC$, les faisceaux pervers des cycles Èvanescents
$R\Psi_{\bar \eta_v,J}(\overline \Qm_l)[d-1](\frac{d-1}{2})$ sur $X_{J,\bar s}$
dÈfinissent un $W_v$-faisceau pervers de Hecke, au sens de la dÈfinition 1.3.6
de \cite{boyer-invent2}, que l'on note $\Psi_{\IC}$.
\end{defi}

\rem dans \cite{boyer-invent2} \S 2.2, on utilise l'action de la monodromie pour
dÈcouper $\Psi_{\IC}$ 
$$\Psi_{\IC} = \bigoplus_{\atop{1 \leq g \leq d}{\pi_v \in \cusp_v(g)}} \Psi_{\IC,\pi_v}$$
o˘ $\cusp_v(g)$ dÈsigne l'ensemble des classes d'Èquivalence inertielles, cf. dÈfinition 1.1.3
de \cite{boyer-invent2}, des reprÈsentations irrÈductibles cuspidales
de $GL_g(F_v)$ avec $1 \leq g \leq d$.

\begin{defi} (\textbf{Faisceau pervers dits d'Harris-Taylor})
On note $P(t,\pi_v)$ le $W_v$-faisceaux pervers de Hecke sur $X_{\IC,\bar s}^{\geq 1}$ de support
$X_{\IC,\bar s}^{\geq h}$ et de poids zÈro dÈfini par
$$j^{= tg}_{!*} HT(\pi_v,\st_t(\pi_v)) \otimes \LF(\pi_v),$$
o˘ $\LF^\vee$ est la correspondance de Langlands, cf. \cite{h-t} et o˘
l'action de $G(\Am^\oo) \times W_v$ sur $P(t,\pi_v)$ se dÈfinit par induction en faisant agir
$$(g^p,g_{p,0},g_v^c,g_v^{et},g_{v_i},\sigma) \in G(\Am^{\oo,p}) \times \Qm_p^\times \times GL_{tg}(F_v) \times GL_{d-tg}(F_v)
\times \prod_{i=2}^r (B_{v_i}^{op})^\times \times W_v$$
via l'action de:
\begin{itemize}
\item $(g^p,g_{p,0}q^{-\deg \sigma},\gamma,g_v^{et},g_{v_i}) \in G^{(tg)}(\Am^{\oo})/\DC_{v,tg}^\times$
sur $\FC(t,\pi_v)$ o˘ $\gamma \in D_{v,tg}^\times$ est tel que $v(\rn \delta)=v(\det g_v^c)-\deg \sigma$;

\item $(g_v^c,\sigma)$ sur $\st_t(\pi_v) \otimes \LF(\pi_v)$.
\end{itemize}
\end{defi}

\rem on notera que $P(t,\pi_v)$ ne dÈpend, en tant que
$W_v$-faisceau pervers de Hecke, que de la classe d'Èquivalence inertielle de $\pi_v$.
D'aprËs la remarque qui suit le thÈorËme 2.4.4 de \cite{boyer-invent2},
les faisceaux pervers d'Harris-Taylor $P(t,\pi_v)$ sont de la
forme $e_{\pi_v} \PC(t,\pi_v)$ o˘ $\PC(t,\pi_v)$ est un faisceau pervers simple,
o˘ $e_{\pi_v}$ est le cardinal de la classe d'Èquivalence inertielle de $\pi_v$, cf. loc. cit. dÈfinition 1.1.3. Pour ce qui concerne les groupes de Grothendieck de faisceaux pervers
dits de Hecke, on renvoie le lecteur au \S 7 de \cite{boyer-invent2}.

\begin{prop} \phantomsection \label{prop-psi-fil}
Soit 
$$0=\Fil^0_!(\Psi_{\IC,\pi_v}) \subset \Fil^1_!(\Psi_{\IC,\pi_v}) \subset \cdots \subset
\Fil^d_!(\Psi_{\IC,\pi_v})=\Psi_{\IC,\pi_v}$$
la filtration de stratification de $\Psi_{\IC,\pi_v}$ de la proposition \ref{prop-ss-filtration}. 
Pour tout $r$ non divisible par $g$, le graduÈ $\gr^r_!(\Psi_{\IC,\pi_v})$ est nul et pour
$r=tg$ avec $1 \leq t \leq s$, il est ‡ support dans $X^{\geq tg}_{\IC,\bar s}$ avec
$j^{\geq tg,*} \gr^{tg}_!(\Psi_{\IC,\pi_v}) \simeq HT(\pi_v, \st_t(\pi_v)) \otimes 
L_g(\pi_v) (\frac{1-t}{2})$. La surjection
$$j^{= tg}_! HT(\pi_v, \st_t(\pi_v)) \otimes L_g(\pi_v) (\frac{1-t}{2}) \twoheadrightarrow 
\gr^{tg}_!(\Psi_{\IC,\pi_v}),$$
a alors pour image dans le groupe de Grothendieck
$\sum_{i=t}^s \PC(i,\pi_v)(\frac{1+i-2t}{2}).$
\end{prop}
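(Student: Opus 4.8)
The plan is to reduce everything to the general machinery of Section~\ref{para-filtration} together with the computations of \cite{boyer-invent2}, bearing in mind that over $\Lambda=\overline\Qm_l$ the quasi-abelian category $\FC$ is just the abelian category of perverse Hecke sheaves: the perversities $p$ and $p+$ coincide, every bimorphism is an isomorphism, and the filtrations of Section~\ref{para-filtration} are honest filtrations by subobjects. First I would recall from \cite{boyer-invent2} the restriction of $\Psi_{\IC,\pi_v}$ to the Newton strata: $j^{=h,*}\Psi_{\IC,\pi_v}=0$ unless $g\mid h$, and $j^{=tg,*}\Psi_{\IC,\pi_v}\simeq HT(\pi_v,\st_t(\pi_v))\otimes L_g(\pi_v)(\frac{1-t}{2})$ for $1\leq t\leq s$. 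Since the closed complement of $X^{1\leq tg}_{\IC,\bar s}$ in $X^{1\leq r}_{\IC,\bar s}$, for $tg\leq r<(t+1)g$, is a union of strata of height not a multiple of $g$, on which the $*$-restriction of $\Psi_{\IC,\pi_v}$ vanishes, the localization triangle identifies $j^{1\leq r,*}\Psi_{\IC,\pi_v}$ with the extension by zero from $X^{1\leq tg}_{\IC,\bar s}$ of $j^{1\leq tg,*}\Psi_{\IC,\pi_v}$; hence $\lexp {p+}j^{1\leq r}_!j^{1\leq r,*}\Psi_{\IC,\pi_v}$, and therefore $\Fil^r_!(\Psi_{\IC,\pi_v})$, is constant on that range. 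In particular $\gr^r_!(\Psi_{\IC,\pi_v})=0$ for $g\nmid r$, $\Fil^{tg-1}_!(\Psi_{\IC,\pi_v})=\Fil^{(t-1)g}_!(\Psi_{\IC,\pi_v})$, and $\gr^{tg}_!(\Psi_{\IC,\pi_v})=\Fil^{tg}_!(\Psi_{\IC,\pi_v})/\Fil^{(t-1)g}_!(\Psi_{\IC,\pi_v})$.

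Next I would invoke the iterative description of the stratification filtration (the remark following Proposition~\ref{prop-ss-filtration}): this last quotient is $i_{tg,*}\Fil^0_{X^{=tg},!}(M)$, where $\Fil^0_{X^{=tg},!}$ is the two-stratum construction of \S\ref{para-filtration-recollement} for $U=X^{=tg}_{\IC,\bar s}\hookrightarrow X^{\geq tg}_{\IC,\bar s}$ and $M:=\Psi_{\IC,\pi_v}/\Fil^{(t-1)g}_!(\Psi_{\IC,\pi_v})$, which is supported on $X^{\geq tg}_{\IC,\bar s}$ with $j^{\geq tg,*}M\simeq HT(\pi_v,\st_t(\pi_v))\otimes L_g(\pi_v)(\frac{1-t}{2})$ (the restriction of $\Fil^{(t-1)g}_!(\Psi_{\IC,\pi_v})$ to $X^{=tg}_{\IC,\bar s}$ being acyclic by the very construction of that filtration). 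By Lemma~\ref{lem-filtration-d2} the restriction of $\Fil^0_{X^{=tg},!}(M)$ to $X^{=tg}_{\IC,\bar s}$ is an isomorphism onto $j^{\geq tg,*}M$, which gives the asserted $j^{\geq tg,*}\gr^{tg}_!(\Psi_{\IC,\pi_v})\simeq HT(\pi_v,\st_t(\pi_v))\otimes L_g(\pi_v)(\frac{1-t}{2})$; and since $\Fil^0_{X^{=tg},!}(M)$ is by construction the image of the adjunction morphism $\can_{!,M}$, and $j^{=tg}$ is affine, $\gr^{tg}_!(\Psi_{\IC,\pi_v})$ is the displayed quotient of $j^{=tg}_!HT(\pi_v,\st_t(\pi_v))\otimes L_g(\pi_v)(\frac{1-t}{2})$.

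For the image in the Grothendieck group I would argue by comparison with the global count. Being a quotient of $j^{=tg}_!HT(\pi_v,\st_t(\pi_v))\otimes L_g(\pi_v)(\frac{1-t}{2})$, whose class is, by \cite{boyer-invent2} (proposition~4.3.1 and corollaire~5.4.1, together with $P(i,\pi_v)=e_{\pi_v}\PC(i,\pi_v)$ and the discussion of $\Fil^\bullet_*(\pi_v,\Pi_t)$ above), equal to $\sum_{i=t}^s\PC(i,\pi_v)(\frac{1+i-2t}{2})$ with pairwise distinct summands of multiplicity one, the sheaf $\gr^{tg}_!(\Psi_{\IC,\pi_v})$ has class $\sum_{i\in S_t}\PC(i,\pi_v)(\frac{1+i-2t}{2})$ for some $S_t\subseteq\{t,\dots,s\}$. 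On the other hand the filtration gives $\sum_{t=1}^s[\gr^{tg}_!(\Psi_{\IC,\pi_v})]=[\Psi_{\IC,\pi_v}]$, and by \cite{boyer-invent2} this class equals $\sum_{t=1}^s\sum_{i=t}^s\PC(i,\pi_v)(\frac{1+i-2t}{2})$. Since the classes $\PC(i,\pi_v)(\frac{1+i-2t}{2})$ attached to distinct pairs $(t,i)$ with $1\leq t\leq i\leq s$ are pairwise distinct (distinct supports for distinct $i$, distinct Tate twists for distinct $t$), comparing the two expressions forces $S_t=\{t,\dots,s\}$ for every $t$, which is the asserted equality.

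The hard part will be this Grothendieck-group identification, i.e.\ the bookkeeping that rewrites the a priori constituents $j^{=ig}_{!*}HT(\pi_v,\st_t(\pi_v)\overrightarrow{\times}\st_{i-t}(\pi_v))\otimes\Xi^{(t-i)/2}\otimes L_g(\pi_v)(\frac{1-t}{2})$ of $j^{=tg}_!HT(\pi_v,\st_t(\pi_v))\otimes L_g(\pi_v)(\frac{1-t}{2})$ precisely as the twisted simple sheaves $\PC(i,\pi_v)(\frac{1+i-2t}{2})$; this rests on the representation-theoretic identities of \cite{boyer-invent2} relating $\st_t(\pi_v)\overrightarrow{\times}\st_{i-t}(\pi_v)$ to $\st_i(\pi_v)$ and the Speh-type terms, and on the compatibility of the various normalizations ($L_g(\pi_v)$ versus $\LF(\pi_v)$, and $\Xi$ versus the Tate twist). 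Everything else is a formal consequence of the machinery of Section~\ref{para-filtration} and of the weight and purity information already recorded in \cite{boyer-invent2}.
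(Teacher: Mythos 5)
Your strategy for the Grothendieck-group identity is a genuine departure from the paper's, and a nice one. The paper establishes the upper bound $[\Fil^{kg}_!(\Psi_{\IC,\pi_v})]\leq\sum_{t=1}^k\sum_{i=t}^s\PC(i,\pi_v)(\frac{1+i-2t}{2})$ via support considerations on $j^{=kg}_!j^{=kg,*}\Psi_{\IC,\pi_v}$, and then rules out strict inequality by contradiction, invoking Th\'eor\`eme~\ref{theo-boyer} (the deep cohomological input on germs of $\hi^i j^{\geq tg}_{!*}HT$, proved via the Berkovich--Fargues comparison with Lubin--Tate spaces) together with corollaire~2.2.10 of \cite{boyer-invent2} to produce an impossible germ of $\hi^i\Psi_{\IC,\pi_v}$. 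Your route avoids Th\'eor\`eme~\ref{theo-boyer} entirely: once each $\gr^{tg}_!(\Psi_{\IC,\pi_v})$ is known to be a quotient of $j^{=tg}_!HT(\pi_v,\st_t(\pi_v))\otimes L_g(\pi_v)(\frac{1-t}{2})$ so that $[\gr^{tg}_!]=\sum_{i\in S_t}\PC(i,\pi_v)(\frac{1+i-2t}{2})$ for some $S_t\subseteq\{t,\dots,s\}$, the pairwise distinctness of the simples $\PC(i,\pi_v)(\frac{1+i-2t}{2})$ (distinct supports for distinct $i$, distinct Tate twists for distinct $t$) and the equality $\sum_t[\gr^{tg}_!]=[\Psi_{\IC,\pi_v}]=\sum_t\sum_{i=t}^s\PC(i,\pi_v)(\frac{1+i-2t}{2})$ force $S_t=\{t,\dots,s\}$ by pure bookkeeping. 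This is cleaner and more elementary, close in spirit to the alternating-sum arguments the paper itself advertises in the appendix.

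There is, however, a real imprecision in your step producing the surjection. You assert $j^{=tg,*}\Psi_{\IC,\pi_v}\simeq HT(\pi_v,\st_t(\pi_v))\otimes L_g(\pi_v)(\frac{1-t}{2})$; this cannot hold as written, since $j^{=tg}$ is a locally closed embedding of positive codimension and $j^{=tg,*}\Psi_{\IC,\pi_v}$ is a genuine complex whose cohomology sheaves in degrees $(t-s)g-r$, $0\leq r\leq t-1$, carry $HT(\pi_v,LT_{\pi_v}(t,r))$ (\cite{boyer-invent2} corollaire~2.2.10). Only the top perverse cohomology $\lexp p{\hi}^0$ is the displayed local system. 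Likewise "the restriction of $\Fil^{(t-1)g}_!$ to $X^{=tg}$ is acyclic" is too strong: the construction of $\Fil^\bullet_{\SF,!}$ only yields the vanishing of the single group $\lexp p{\hi}^0 i_r^*\Fil^{r-1}_{\SF,!}$. The intended deduction does go through once the $\lexp p{\hi}^0$'s are inserted --- using $j^{1\leq tg,*}\Fil^{tg}_!\simeq j^{1\leq tg,*}\Psi_{\IC,\pi_v}$, right $t$-exactness of $j^{=tg,*}$, and the vanishing $\lexp p{\hi}^0 j^{=tg,*}\Fil^{(t-1)g}_!=0$, so that $j^{\geq tg,*}\gr^{tg}_!\simeq\lexp p{\hi}^0 j^{=tg,*}\Psi_{\IC,\pi_v}$ --- so the strategy is sound, but the step as written conflates a complex with its perverse $\hi^0$ and must be rewritten.
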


\begin{proof}
D'aprËs \cite{boyer-invent2} corollaire 5.4.2, dans le groupe de Grothendieck on a
$$[\Psi_{\IC,\pi_v}]= \sum_{t=1}^{s} \bigl ( \sum_{i=t}^s \PC(i,\pi_v)(\frac{1+i-2t}{2}) \bigr ).$$
Ainsi par construction, les $\gr^r_!(\Psi_{\IC,\pi_v})$ sont nuls si $r$ n'est pas de la forme $kg$
pour $1 \leq k \leq s$. Par ailleurs comme les constituants de $j^{=kg,*} \Psi_{\IC,\pi_v}$
sont les $j^{=kg,*} \PC(i,\pi_v) (\frac{1+i-2t}{2})$ pour $1 \leq t \leq i \leq k$, on 
en dÈduit que les constituants communs ‡ $\Psi_{\IC,\pi_v}$ et ‡ 
$j^{= kg}_! j^{=kg,*} \Psi_{\IC,\pi_v}$ sont les $\PC(i,\pi_v)  (\frac{1+i-2t}{2})$
pour $1 \leq t \leq k$ et $t \leq i \leq s$ de sorte que, dans le groupe de Grothendieck,
l'image $\Fil^{kg}_!(\Psi_{\IC,\pi_v})$ de
$j^{= kg}_! j^{=kg,*} \Psi_{\IC,\pi_v} \longrightarrow \Psi_{\IC,\pi_v}$
est infÈrieure ‡ $\sum_{t=1}^k \sum_{i=t}^s \PC(i,\pi_v)(\frac{1+i-2t}{2})$.
Supposons par l'absurde qu'il existe $k$ tel que $\Fil^{kg}_!(\Psi_{\IC,\pi_v})$ 
soit strictement infÈrieure ‡ la somme ci-dessus. Soient alors $i_0>k$ minimal tel que 
$\PC(i_0,\pi_v)(\frac{1+i-2t}{2})$  ne soit pas un  constituant de $\Fil^{kg}_!(\Psi_{\IC,\pi_v})$ 
et $z$ un point gÈnÈrique de $X^{=i_0g}_{\IC,\bar s}$. D'aprËs \ref{theo-boyer}, dans le 
groupe de Grothendieck des 
reprÈsentations de $GL_{i_0g}(F_v) \times W_v$, la fibre en $z$ de 
$\hi^{-\dim z} \PC(i_0,\pi_v) (\frac{1+i_0-2t}{2})$ (resp. de $\hi^{-\dim z-1} \PC(i_0-1,\pi_v)
(\frac{i_0-2t}{2})$) est supÈrieure ‡
$\st_{i_0}(\pi_v) \otimes L_g(\pi_v) (\frac{i_0g-d+1+i_0-2t}{2})$; 
par ailleurs ce n'est jamais le cas
pour tout autre $\hi^{-\dim z-\delta} \PC(i,\pi_v)(\frac{1+i-2t'}{2})$ quel que soit 
$\delta \geq 0$. Comme par hypothËse $\PC(i_0-1,\pi_v) (\frac{i_0-2t}{2})$
est un constituant de $\Fil^{kg}_! (\Psi_{\IC,\pi_v})$ et que 
$\PC(i_0,\pi_v) (\frac{1+i_0-2t}{2})$ n'en est pas un, on en dÈduit que
$\hi^{-\dim z} \bigl ( \Psi_{\IC,\pi_v}/ \Fil^{kg}_! (\Psi_{\IC,\pi_v}) \bigr )$, et donc 
$\hi^{-\dim z} \Psi_{\IC,\pi_v}$, est supÈrieure ‡ 
$\st_{i_0}(\pi_v) \otimes L_g(\pi_v) (\frac{i_0g-d+1+i_0-2t}{2})$,
ce qui n'est pas d'aprËs le calcul des germes des faisceaux de cohomologie de $\Psi_\IC$
donnÈ au corollaire 2.2.10 de \cite{boyer-invent2}.
\end{proof}

\rem la figure \ref{figure3} illustre la filtration de stratification
de $\Psi_{\IC,\pi_v}$ o˘ chaque cercle reprÈsente un faisceau pervers
d'Harris-Taylor $\PC(t,\pi_v)(\frac{1-t}{2}+k)$. 
L'utilisation de \ref{theo-boyer} et le corollaire 2.2.10 dans la preuve prÈcÈdente
revient ‡ dire que pour tout
$1 \leq t \leq s-1$ et pour tout $0 \leq k \leq s-t-1$, dans toute filtration
croissante de $\Psi_{\IC,\pi_v}$ l'indice $k_1$ du graduÈ contenant
$\PC(t-k,\pi_v)(\frac{t-k-1}{2})$ est infÈrieur ou Ègale ‡ l'indice $k_2$
du graduÈ contenant $\PC(t-k+1,\pi_v)(\frac{t-k-2}{2})$. Le lecteur pourra trouver
une illustration graphique de ces contraintes ‡ la figure \ref{figure2} o˘ lorsque 
deux cercles sont reliÈs il faut que l'indice de celui qui est le plus bas soit supÈrieur ou 
Ègal ‡ l'autre. Au \S \ref{para-psi} nous proposerons une preuve cohomologique de la 
proposition
\ref{prop-psi-fil} et nous en dÈduirons une nouvelle dÈmonstration du calcul des 
$\hi^i \Psi_{\IC}$.

\begin{figure}[ht]
\centering
\scalebox{.72}{\input{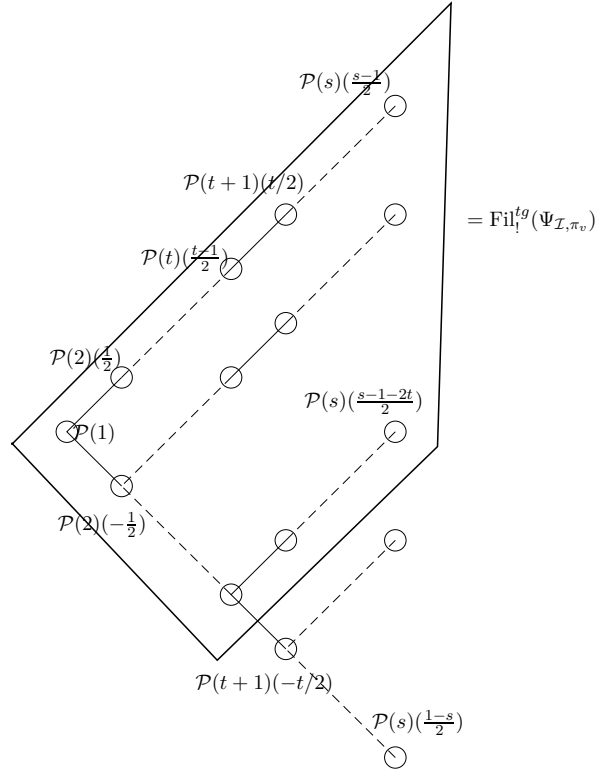}}
\caption{\label{figure3} Illustration de la filtration de
stratification de $\Psi_{\IC,\pi_v}$.}
\end{figure}

\begin{figure}[ht]
\centering
\scalebox{.8}{\input{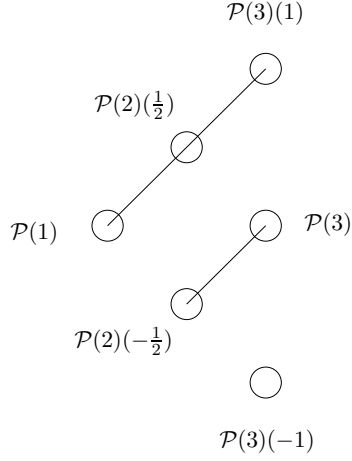}}
\caption{\label{figure2} Illustration des contraintes
d'une filtration de $\Psi_{\IC,\pi_v}$ avec $s=3$.}
\end{figure}

Le lecteur notera qu'en supprimant les graduÈs nuls de $\Fil^\bullet_!(\Psi_{\IC})$,
la filtration obtenue coÔncide avec la filtration par les noyaux itÈrÈs de la monodromie. 
Par dualitÈ, la filtration $\Fil^\bullet_*(\Psi_\IC)$ ‡ laquelle on supprime les graduÈs nuls,
coÔncide avec celle par les images itÈrÈes de la monodromie. Ainsi la bifiltration 
de monodromie s'obtient en considÈrant les $\Fil^i_* \bigl ( \Fil^j_! (\Psi_\IC) \bigr )$.
PrÈcisÈment, en couplant les propositions \ref{prop-fp-K1} et \ref{prop-psi-fil}, 
on obtient la description suivante.

\begin{coro}
Avec les notations prÈcÈdentes les bigraduÈs $\gr^i_* \bigl ( \gr^j_! (\Psi_{\IC,\pi_v} ) \bigr )$
(resp. $\gr^i_! \bigl ( \gr^j_* (\Psi_{\IC,\pi_v} ) \bigr )$)
de la bifiltration $\Fil^i_* \bigl ( \Fil^j_! (\Psi_\IC) \bigr )$ 
(resp. de $\Fil^i_! \bigl ( \Fil^j_* (\Psi_\IC) \bigr )$) sont nuls sauf pour $(i,j)$
de la forme $(t'g,tg)$ avec $1 \leq t \leq t' \leq s$ auquel cas 
$$\gr^{t'g}_* \bigl ( \gr^{tg}_! (\Psi_{\IC,\pi_v} ) \bigr ) \simeq \PC(t',\pi_v)(\frac{1-2t+t'}{2}),
\quad \Bigl ( \hbox{resp. } \gr^{t'g}_! \bigl ( \gr^{tg}_* (\Psi_{\IC,\pi_v} ) \bigr ) \simeq 
\PC(t',\pi_v)(\frac{2t-1-t'}{2}) \Bigr ).$$
\end{coro}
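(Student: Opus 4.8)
Le plan est de recoller les deux descriptions d\'ej\`a obtenues: celle de $\gr^{tg}_!(\Psi_{\IC,\pi_v})$ fournie par la proposition~\ref{prop-psi-fil}, et celle de la filtration $\Fil^\bullet_*$ des extensions par z\'ero d'Harris--Taylor donn\'ee par la proposition qui pr\'ec\`ede~\ref{prop-fp-K1}. D'apr\`es la proposition~\ref{prop-psi-fil}, les gradu\'es $\gr^j_!(\Psi_{\IC,\pi_v})$ sont nuls hors des indices $j=tg$ avec $1 \leq t \leq s$; dans la bifiltration $\Fil^i_* \bigl ( \Fil^j_!(\Psi_{\IC,\pi_v}) \bigr )$ seuls les bigradu\'es avec $j=tg$ peuvent donc \^etre non nuls, et l'on est ramen\'e \`a calculer, pour chaque $1 \leq t \leq s$ fix\'e, les $\gr^i_*$ du faisceau pervers $M_t:=\gr^{tg}_!(\Psi_{\IC,\pi_v})$. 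Toujours d'apr\`es \ref{prop-psi-fil}, $M_t$ est \`a support dans $X^{\geq tg}_{\IC,\bar s}$, v\'erifie $j^{\geq tg,*}M_t \simeq HT(\pi_v,\st_t(\pi_v)) \otimes L_g(\pi_v)(\frac{1-t}{2})$, et est quotient, par le morphisme d'adjonction, de $N_t:=j^{=tg}_! HT(\pi_v,\st_t(\pi_v)) \otimes L_g(\pi_v)(\frac{1-t}{2})$, sa classe dans le groupe de Grothendieck \'etant $\sum_{i=t}^s \PC(i,\pi_v)(\frac{1+i-2t}{2})$.

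Deuxi\`eme \'etape: j'applique \`a l'\'epimorphisme $N_t \twoheadrightarrow M_t$ la filtration de $\SF$-stratification $\Fil^\bullet_*$ du \S\ref{para-def-filtration0}, qui est fonctorielle. Comme tensoriser par la repr\'esentation galoisienne $L_g(\pi_v)(\frac{1-t}{2})$ commute \`a tous les foncteurs mis en jeu, la proposition d\'ecrivant $\Fil^\bullet_*(\pi_v,\Pi_t)$ (appliqu\'ee avec $\Pi_t=\st_t(\pi_v)$) montre que $\Fil^\bullet_*(N_t)$ est adapt\'ee \`a la stratification de Newton, ses gradu\'es non nuls \'etant, pour $t \leq k \leq s$, les faisceaux pervers simples $j^{= kg}_{!*} HT \bigl ( \pi_v,\st_t(\pi_v) \overrightarrow{\times} \st_{k-t}(\pi_v) \bigr ) \otimes \Xi^{(t-k)/2} \otimes L_g(\pi_v)(\frac{1-t}{2})$; j'identifie ces derniers, en d\'eroulant les d\'efinitions de $HT$, de $\overrightarrow{\times}$ et de $\Xi$ et en invoquant la proposition~4.3.1 et le corollaire~5.4.1 de \cite{boyer-invent2} d\'ej\`a rappel\'es, aux $\PC(k,\pi_v)(\frac{1+k-2t}{2})$. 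La classe de $N_t$ dans le groupe de Grothendieck est alors $\sum_{k=t}^s \PC(k,\pi_v)(\frac{1+k-2t}{2})$, c'est-\`a-dire celle de $M_t$; comme $N_t \twoheadrightarrow M_t$ et que les deux classes co\"incident, cet \'epimorphisme est un isomorphisme et $\Fil^\bullet_*(M_t) \simeq \Fil^\bullet_*(N_t)$. On conclut que $\gr^i_*(M_t)$ est nul sauf pour $i=t'g$ avec $t \leq t' \leq s$, auquel cas $\gr^{t'g}_*(M_t) \simeq \PC(t',\pi_v)(\frac{1-2t+t'}{2})$; compte tenu des in\'egalit\'es $1 \leq t \leq t' \leq s$, ceci \'etablit la premi\`ere assertion. (Variante \'evitant le calcul de $\Fil^\bullet_*(N_t)$: le th\'eor\`eme~\ref{theo-boyer} entra\^ine que la restriction de $M_t$ \`a une strate d'indice non multiple de $g$ est nulle, donc que les seuls sauts de la cofiltration associ\'ee ont lieu aux niveaux $tg,(t+1)g,\dots,sg$; chaque gradu\'e \'etant alors un faisceau pervers de strate ouverte $X^{=t'g}_{\IC,\bar s}$, c'est-\`a-dire une somme de $\PC(t',\pi_v)(\star)$, la classe totale impos\'ee par \ref{prop-psi-fil} le d\'etermine.)

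Troisi\`eme \'etape: la seconde assertion (cas \og resp.\fg{}) s'obtient par dualit\'e de Verdier. Le faisceau pervers $\Psi_\IC$ \'etant autodual pour la normalisation $[d-1](\frac{d-1}{2})$, la dualit\'e $D$ \'echange, d'apr\`es les remarques du \S\ref{para-def-filtration0}, les filtrations de $\SF$-stratification $\Fil^\bullet_!$ et $\Fil^\bullet_*$, de sorte que $D \bigl ( \gr^{t'g}_*(\gr^{tg}_!(\Psi_{\IC,\pi_v})) \bigr ) \simeq \gr^{t'g}_!(\gr^{tg}_*(\Psi_{\IC,\pi_v}))$. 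Comme $\PC(t',\pi_v)$ est un faisceau pervers simple pur de poids $0$, donc autodual, on a $D \bigl ( \PC(t',\pi_v)(\frac{1-2t+t'}{2}) \bigr ) \simeq \PC(t',\pi_v)(\frac{2t-1-t'}{2})$, ce qui conclut.

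L'obstacle principal sera l'identification, \`a la deuxi\`eme \'etape, des gradu\'es simples de $\Fil^\bullet_*(N_t)$ avec les $\PC(k,\pi_v)(\frac{1+k-2t}{2})$: il faudra y suivre pr\'ecis\'ement les torsions $\Xi$, l'op\'eration $\overrightarrow{\times}$ appliqu\'ee aux repr\'esentations de Steinberg $\st_t(\pi_v)$ et $\st_{k-t}(\pi_v)$, et le passage de $L_g(\pi_v)$ \`a $\LF(\pi_v)$ qui absorbe la multiplicit\'e $e_{\pi_v}$. Une fois cette identification faite, c'est l'\'egalit\'e des classes dans le groupe de Grothendieck, fournie par \ref{prop-psi-fil}, qui garantit qu'aucun gradu\'e ne se contracte et que l'\'epimorphisme $N_t \twoheadrightarrow M_t$ est un isomorphisme; le reste n'est que dualit\'e et bookkeeping des twists.
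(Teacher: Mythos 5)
Your argument is essentially the route the paper takes (it couples the description of $\Fil^\bullet_!(\Psi_{\IC,\pi_v})$ from Proposition~\ref{prop-psi-fil} with the description of the $\Fil^\bullet_*$-filtration of the Harris--Taylor extensions by zero and invokes Verdier duality for the ``resp.''\ assertion); the one step you rightly make explicit, and which the paper leaves implicit, is that the surjection $j^{=tg}_!HT(\pi_v,\st_t(\pi_v))\otimes L_g(\pi_v)(\tfrac{1-t}{2})\twoheadrightarrow\gr^{tg}_!(\Psi_{\IC,\pi_v})$ is in fact an isomorphism, which you correctly deduce from the coincidence of classes in the Grothendieck group (both sides having finite length with simple constituents $\PC(i,\pi_v)(\cdot)$). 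The bookkeeping you flag --- the translation of $j^{=kg}_{!*}HT(\pi_v,\st_t(\pi_v)\protect\overrightarrow{\times}\st_{k-t}(\pi_v))\otimes\Xi^{(t-k)/2}\otimes L_g(\pi_v)(\tfrac{1-t}{2})$ into $\PC(k,\pi_v)(\tfrac{1+k-2t}{2})$, absorbing $e_{\pi_v}$ and the Tate twists, and the index conventions for the dual filtration --- is indeed the only part that must be carried out with care, but it is exactly the identification the paper itself relies on throughout \S\ref{para-systeme-locaux}--\S\ref{para-psi0}, so no new idea is missing.
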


\rem pour $s=3$ et $g=1$, le lecteur trouvera ‡ la figure \ref{fig-filtration} une illustration 
des $\Fil^\bullet_!(\Psi_{\IC,\pi_v})$ et $\Fil^\bullet_*(\Psi_{\IC,\pi_v})$.

En ce qui concerne la filtration exhaustive de stratification de $\Psi_{\IC,\pi_v}$,
avec les notations de \ref{nota-gr}, la chasse aux indices fournit le corollaire suivant.

\begin{coro}
Les graduÈs de la filtration exhaustive de stratification de  $\Psi_{\IC,\pi_v}$
$$0=\Fill^{2^{d-1}}_!(\Psi_{\IC,\pi_v}) \subset \cdots \subset \Fill^0_!(\Psi_{\IC,\pi_v}) \subset
\cdots \subset \Fill^{2^{d-1}}_!(\Psi_{\IC,\pi_v})=\Psi_{\IC,\pi_v}$$
sont nuls pour $r$ qui n'est pas de la forme $i_{tg}-2^{d-t} \delta_k(g)$ pour 
$1 \leq t+k \leq s$ avec $t \geq 1$ et $k \geq 0$, et sinon isomorphe ‡
$\grr^{i_{tg}-2^{d-t} \delta_k(g)}_!(\Psi_{\IC,\pi_v}) \simeq \PC(t+k,\pi_v)(\frac{k+1-2t}{2}).$
\end{coro}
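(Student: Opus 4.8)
The strategy is to compare the exhaustive stratification filtration $\Fill^\bullet_!(\Psi_{\IC,\pi_v})$ with the ordinary stratification filtration $\Fil^\bullet_!(\Psi_{\IC,\pi_v})$ of Proposition \ref{prop-psi-fil}, refining each graded piece $\gr^{tg}_!(\Psi_{\IC,\pi_v})$ by the construction of Section \ref{para-def-filtration}. First I would recall, from the construction in the proof of the proposition in \ref{para-def-filtration}, that the exhaustive filtration of $\Psi_{\IC,\pi_v}$ is obtained by applying inductively the two-strata construction $\Fil^{-1}_{U,!} \subset \Fil^0_{U,!} \subset \Fil^1_{U,!}$; its graded pieces refine those of $\Fil^\bullet_!$. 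Since by Proposition \ref{prop-psi-fil} the only nonzero $\gr^{tg}_!(\Psi_{\IC,\pi_v})$ (for $1 \le t \le s$) receives a surjection from $j^{=tg}_! HT(\pi_v,\st_t(\pi_v)) \otimes L_g(\pi_v)(\frac{1-t}{2})$ with image $\sum_{i=t}^s \PC(i,\pi_v)(\frac{1+i-2t}{2})$ in the Grothendieck group, each such graded piece behaves exactly like the $j^{=tg}_!HT(\pi_v,\Pi_t)$ studied in Proposition \ref{prop-fp-K1} and Corollary \ref{coro-fp-K1}. Indeed the remark following Proposition \ref{prop-fp-K1} notes that the reasoning applies to any perverse sheaf whose irreducible constituents are simultaneously ordered by dimension of support and by weight, which is precisely the situation for $\gr^{tg}_!(\Psi_{\IC,\pi_v})$ (constituents of highest weight sit on the smallest strata, thanks to the weight filtration and Theorem \ref{theo-boyer}).

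The core computation is then a bookkeeping of indices. By Corollary \ref{coro-fp-K1} applied to each $\gr^{tg}_!(\Psi_{\IC,\pi_v})$, its exhaustive stratification filtration has nonzero graded pieces exactly at the relative positions $r_{tg} - 2^{d-t}\delta_k(g)$ for $k = 0,\dots,s-t$, with
$$\grr^{r_{tg}-2^{d-t}\delta_k(g)}_!(\gr^{tg}_!(\Psi_{\IC,\pi_v})) \simeq j^{=(t+k)g}_{!*} HT(\pi_v, \st_t(\pi_v) \overrightarrow{\times} \st_k(\pi_v)) \otimes L_g(\pi_v)(\tfrac{1-t}{2})\otimes \Xi^{-k/2}.$$
Using $\st_t(\pi_v)\overrightarrow{\times}\st_k(\pi_v)$ has $\st_{t+k}(\pi_v)$ as a quotient (compatibly with the intermediate extension being a top), together with the identification of the Langlands/Jacquet--Langlands parameters and the Tate twists, one rewrites this graded piece as $\PC(t+k,\pi_v)(\frac{k+1-2t}{2})$. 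It remains to splice the indices: the exhaustive filtration of $\Psi_{\IC,\pi_v}$ built in \ref{para-def-filtration} sits the refinement of $\gr^{tg}_!$ at an absolute index obtained by shifting by $i_{tg}$ (the cumulative offset coming from the first $tg-1$ strata, cf. Notation \ref{nota-gr}), so the nonzero graded pieces occur at $r = i_{tg} - 2^{d-t}\delta_k(g)$, as claimed.

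The main obstacle is verifying that the absolute index really is $i_{tg} - 2^{d-t}\delta_k(g)$, i.e. that the recursive ``tiré en arrière / poussé en avant'' bookkeeping of Section \ref{para-def-filtration} produces exactly the offsets $i_h = 2^{d-2} + \cdots + 2^{d-h}$ and that $v_{2,d}$ is constant on the relevant blocks (this is the ``chasse aux indices'' alluded to in the statement). Here I would proceed by induction on $d$ exactly as in the proof of the proposition of \ref{para-def-filtration}: passing from $\SF$ to $\SF^1 = \{X^{\ge 2} \supsetneq \cdots\}$ shifts every index by $\pm 2^{d-2}$ and multiplies the scale of the sub-refinement by $2^{-g}$ at each further step down the strata, which is precisely what the factors $2^{d-t}$ and $\delta_k(g) = \sum_{i=1}^k 2^{-ig}$ encode. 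A secondary point to check is that no extra nonzero graded pieces appear: this follows because the constituents of $\Psi_{\IC,\pi_v}$ are, by \cite{boyer-invent2} corollaire 5.4.2, exactly the $\PC(i,\pi_v)(\frac{1+i-2t}{2})$ for $1 \le t \le i \le s$, and each one appears once in the bigraded description of the preceding corollary, so a dimension count forces the enumeration to be exhaustive and injective.
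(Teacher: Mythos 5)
Your proposal is correct in substance and follows essentially the paper's intended argument, which is in fact only stated by the author as \og la chasse aux indices \fg{} coupling the proposition \ref{prop-psi-fil} and the corollary \ref{coro-fp-K1}. You correctly identify the key structural fact (that the exhaustive filtration of $\Psi_{\IC,\pi_v}$ contains $\Fil^{tg}_!(\Psi_{\IC,\pi_v})$ as $\Fill^{r_{tg}}_!(\Psi_{\IC,\pi_v})$ and that the refinement between consecutive such indices is the exhaustive filtration of $\gr^{tg}_!(\Psi_{\IC,\pi_v})$), you correctly observe that the surjection of the proposition \ref{prop-psi-fil} is actually an isomorphism since source and target have the same class in the Grothendieck group, and you correctly reduce to the corollary \ref{coro-fp-K1} applied with $\Pi_t = \st_t(\pi_v)$ and the extra factor $L_g(\pi_v)(\frac{1-t}{2})$.

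Two points deserve a remark. First, your final \og shift by $i_{tg}$ \fg{} step is confused as written: the indices $r_{tg}-2^{d-t}\delta_k(g)$ appearing in the corollary \ref{coro-fp-K1} are \emph{already} absolute indices for the filtration $\Fill^\bullet_!$ with respect to the full Newton stratification $\SF$ (the offset $r_{tg}$ is already included in that formula, and one checks from the recursive construction of \S\ref{para-def-filtration} that these indices lie exactly in the interval $]\,r_{(t-1)g},\,r_{tg}\,]$, so that no further translation is necessary). Your final formula $r = r_{tg}-2^{d-t}\delta_k(g)$ is nevertheless correct, but the intermediate talk of adding a cumulative offset to a \og relative position \fg{} does not reflect what is actually happening; the compatibility you really need, and only gesture at, is that the exhaustive filtration of $\Psi_{\IC,\pi_v}$ between the levels $\Fil^{(t-1)g}_!$ and $\Fil^{tg}_!$ coincides with the intrinsic exhaustive filtration of $\gr^{tg}_!(\Psi_{\IC,\pi_v})$, which does follow from the inductive tir\'e en arri\`ere / pouss\'e en avant construction but should be stated as such. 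Second, a sanity check against the corollaire 5.4.2 of \cite{boyer-invent2} (and against the immediately preceding bigraded corollary of the paper, which gives $\gr^{t'g}_*\gr^{tg}_!(\Psi_{\IC,\pi_v})\simeq \PC(t',\pi_v)(\frac{1-2t+t'}{2})$) shows that the Tate twist on $\PC(t+k,\pi_v)$ must be $(\frac{1+k-t}{2})$, i.e.\ $(\frac{1-2t+t'}{2})$ with $t'=t+k$; the exponent $\frac{k+1-2t}{2}$ that you, following the displayed statement, write at the end is a misprint of the source, and your own intermediate formula
$j^{=(t+k)g}_{!*}HT(\pi_v,\st_t(\pi_v)\overrightarrow{\times}\st_k(\pi_v))\otimes L_g(\pi_v)(\tfrac{1-t}{2})\otimes\Xi^{-k/2}$
does in fact simplify to $\PC(t+k,\pi_v)(\frac{1+k-t}{2})$, not to $\PC(t+k,\pi_v)(\frac{k+1-2t}{2})$.
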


\begin{figure}[ht]
\centering
\scalebox{.8}{\input{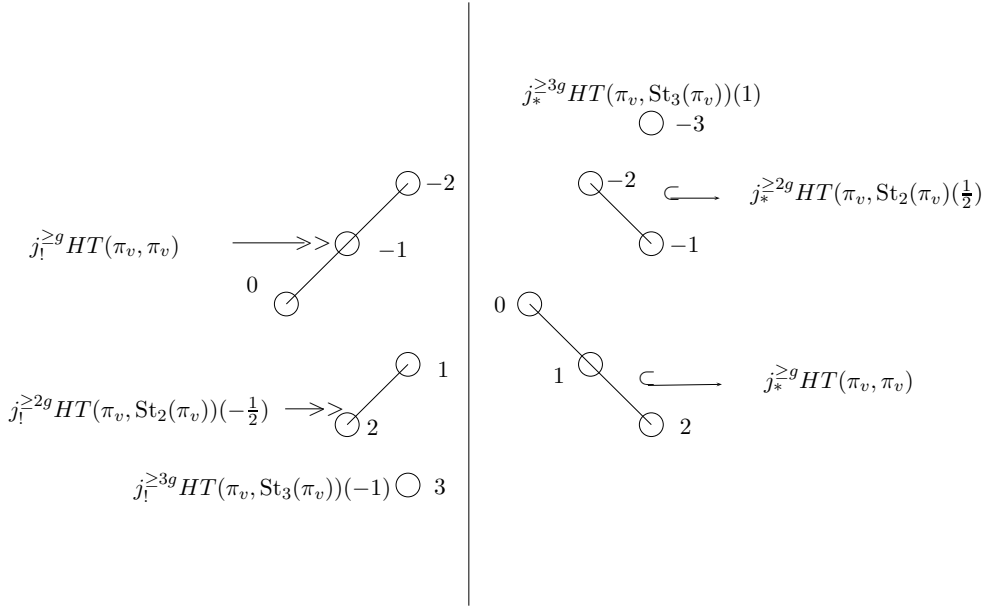}}
\caption{\label{fig-filtration} Illustration de la filtration (fig. ‡ gauche)
et de la cofiltration (fig. ‡ droite) de stratification de $\Psi_{\IC,\pi_v}$ avec $s=3$.}
\end{figure}

\appendix

\section{Retour sur les rÈsultats faisceautiques de \texorpdfstring{\cite{boyer-invent2}}{Lg}}
\label{para-retour}

L'objectif de cet appendice est de revenir sur la preuve des principaux rÈsultats
de \cite{boyer-invent2} en utilisant les filtrations de stratification, l'intÈrÍt Ètant de
simplifier les arguments et de dÈgager une future stratÈgie d'Ètude sur
$\overline \Zm_l$. Rappelons tout d'abord la stratÈgie de loc. cit.

\textit{\'Etape 1}: 
on utilise tout d'abord le thÈorËme de comparaison de Berkovich-Fargues afin de
relier les germes des faisceaux de cohomologie du complexe des cycles Èvanescents
‡ la cohomologie des espaces de Lubin-Tate.

\textit{\'Etape 2}: dans \cite{h-t}, les auteurs donnent une \og recette \fg{} afin de
calculer la somme alternÈe des groupes de cohomologie des $j_!^{\geq tg} HT(\pi_v,\Pi_t)$
avec les notations prÈcÈdentes. Ainsi au terme du \S 5.4 de \cite{boyer-invent2},
on connait les constituants simples de $j^{\geq tg}_! HT(\pi_v,\Pi_t)$ et 
$\Psi_{\IC,\pi_v}$.

\textit{\'Etape 3}: 
il s'agit alors de calculer les faisceaux de cohomologie des faisceaux pervers
d'Harris-Taylor ainsi que ceux du complexe des cycles Èvanescents. Pour ce faire:
\begin{itemize}
\item dans \cite{boyer-invent2}, on utilise une propriÈtÈ d'autodualitÈ ‡ la Zelevinski
sur la cohomologie des espaces de Lubin-Tate, prouvÈe par Fargues;

\item une deuxiËme solution
est proposÈe dans \cite{boyer-compositio} et repose sur le thÈorËme de Lefschetz difficile
et sur des calculs fastidieux de groupes de cohomologie.
\end{itemize}
Le but de cette section est de donner une preuve alternative de l'Ètape 3 en utilisant
les filtrations de stratification; en particulier nous n'utilisons plus \ref{theo-boyer} et en 
proposons mÍme une nouvelle dÈmonstration.

\subsection{Rappels cohomologiques de \texorpdfstring{\cite{h-t}}{Lg}}
\label{para-rappel-coho}

Dans ce paragraphe nous allons rappeler, en utilisant les Ètapes 1 et 2 ci-avant,
lesquels des rÈsultats de \cite{boyer-compositio} nous utiliserons. Notons que ceux-ci
dÈcoulent des techniques habituelles de formules des traces.

On note $H^i_{\bar \eta}$ le $i$-Ëme groupe de cohomologie de la variÈtÈ de Shimura
$X_{\IC,\bar \eta}$. Pour $\Pi^\oo$ une reprÈsentation irrÈductible de $G(\Am^\oo)$,
on note $[H^i_{\bar \eta}]\{\Pi^{\oo,v} \}$ la composante $\Pi^{\oo,v}$-isotypique de 
$H^i_{\bar \eta}$ dans le groupe de Grothendieck des 
$GL_d(F_v) \times W_v$-reprÈsentations. Rappelons que si celle-ci est non nulle
alors $\Pi^{\oo,v}$ est la composante hors $\oo,v$ d'une reprÈsentation automorphe
cohomologique $\Pi$. On fixe alors jusque la fin de cet appendice 
une telle reprÈsentation automorphe $\Pi$ telle que 
sa composante locale $\Pi_v \simeq \speh_s(\pi_v)$ pour $\pi_v$ une reprÈsentation 
irrÈductible admissible cuspidale de $GL_g(F_v)$ avec $d=sg$.

\begin{lemm}(cf. la proposition 3.5.1 de \cite{boyer-compositio})  \phantomsection
\label{lem-rappel-coho} 
Pour tout $1 \leq t \leq s$, en tant que reprÈsentation de $GL_d(F_v)$,
$[H^{t-s}(j^{\geq tg}_{!*} HT(\pi_v,\Pi_t))]\{ \Pi^{\oo,v} \}$
est Ègale ‡ un multiple de 
$\Pi_t \overrightarrow{\times} \speh_{s-t}(\pi_v)$.
\end{lemm}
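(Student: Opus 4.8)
Le plan est de d\'eduire l'\'enonc\'e des techniques habituelles de formules des traces, pr\'ecis\'ement de la \og recette \fg{} de \cite{h-t} (reprise au \S 5.4 de \cite{boyer-invent2}, \'etape 2 de l'introduction de cet appendice), qui fournit la somme altern\'ee des groupes de cohomologie des $j^{= tg}_! HT(\pi_v,\Pi_t)$ en tant que repr\'esentations virtuelles de $GL_d(F_v)\times W_v$. Par additivit\'e de $HT(\pi_v,-)$ en le second argument, on se ram\`ene au cas o\`u $\Pi_t$ est irr\'eductible, et l'on raisonnera uniform\'ement sur les $j^{= kg}_{!*} HT(\pi_v,\rho)$, $1 \le k \le s$, pour $\rho$ irr\'eductible admissible de $GL_{kg}(F_v)$.

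Premi\`ere \'etape : calculer, par r\'ecurrence descendante sur $k$, la somme altern\'ee $\sum_i (-1)^i [H^i(X_{\IC,\bar s}, j^{= kg}_{!*} HT(\pi_v,\rho))]\{\Pi^{\oo,v}\}$. Pour $k=s$, la strate $X^{= sg}_{\IC,\bar s}$ \'etant de dimension nulle, on a $j^{= sg}_! HT(\pi_v,\rho)=j^{= sg}_{!*} HT(\pi_v,\rho)$, concentr\'e en degr\'e $0=s-s$, et la recette en donne directement la classe. Pour $k<s$, j'utiliserais la suite exacte courte $0 \to K \to j^{= kg}_! HT(\pi_v,\rho) \to j^{= kg}_{!*} HT(\pi_v,\rho) \to 0$ : d'apr\`es \cite{boyer-invent2} proposition 4.3.1 et corollaire 5.4.1 les constituants simples de $K$ sont des $j^{= k'g}_{!*} HT(\pi_v,\rho')$ avec $k'>k$, donc connus par l'hypoth\`ese de r\'ecurrence ; la caract\'eristique d'Euler \'etant additive, la recette appliqu\'ee \`a $j^{= kg}_! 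HT(\pi_v,\rho)$ donne alors $\sum_i (-1)^i [H^i(j^{= kg}_{!*} HT(\pi_v,\rho))]\{\Pi^{\oo,v}\}$.

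Deuxi\`eme \'etape : passer de la somme altern\'ee aux termes individuels. Le complexe $\FC(\pi_v,k)[d-kg]\otimes\Xi^{(kg-d)/2}$ est pur --- c'est le contenu du fait que les faisceaux pervers d'Harris--Taylor $P(k,\pi_v)$ sont de poids z\'ero --- et $\rho$ n'intervient dans $HT(\pi_v,\rho)$ que comme coefficient, sans affecter le poids de Frobenius ; donc $j^{= kg}_{!*} HT(\pi_v,\rho)$ est pur d'un poids $w_k$ ind\'ependant de $\rho$ et, $X_{\IC,\bar s}$ \'etant propre, chaque $H^i(X_{\IC,\bar s}, j^{= kg}_{!*} HT(\pi_v,\rho))$ est pur de poids $w_k+i$. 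Ces poids \'etant deux \`a deux distincts, l'action du Frobenius de $W_v$ scinde la somme altern\'ee en ses termes $[H^i]\{\Pi^{\oo,v}\}$ ; en sp\'ecialisant $k=t$, $\rho=\Pi_t$ et $i=t-s$ on obtient le groupe de l'\'enonc\'e, et la formule explicite de la recette, jointe \`a la combinatoire \`a la Zelevinsky des repr\'esentations de type $\st$ et $\speh$, l'identifie \`a un multiple de $\Pi_t\overrightarrow{\times}\speh_{s-t}(\pi_v)$.

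Le point le plus d\'elicat sera cette comptabilit\'e finale : suivre les poids $w_k$ et la structure de $W_v$-module \`a travers la recette, et effectuer l'identification combinatoire isolant $\Pi_t\overrightarrow{\times}\speh_{s-t}(\pi_v)$ --- notamment en utilisant qu'il est le socle de $\Pi_t\overrightarrow{\times}\pi_v\overrightarrow{\times}\speh_{s-t-1}(\pi_v)$ --- parmi les facteurs de Jordan--H\"older des induites paraboliques qui interviennent. Une variante, conforme \`a \cite{boyer-compositio}, consiste \`a remplacer le scindage par le poids par le th\'eor\`eme de Lefschetz difficile appliqu\'e au faisceau pervers pur $j^{= tg}_{!*} HT(\pi_v,\Pi_t)$ sur $X_{\IC,\bar s}$ propre.
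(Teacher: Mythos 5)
Votre proposition reproduit fid\`element le sch\'ema que le texte lui-m\^eme esquisse dans la remarque qui suit le lemme : (i) la \og recette \fg{} de \cite{h-t} donne la somme altern\'ee $[H^*(j^{\geq tg}_! HT(\pi_v,\st_t(\pi_v)))]\{\Pi^{\oo,v}\}$; (ii) on en d\'eduit la caract\'eristique d'Euler de $j^{\geq tg}_{!*} HT$ via la relation entre $j_!$ et $j_{!*}$ --- vous l'obtenez par r\'ecurrence descendante sur la suite exacte courte $0 \to K \to j_!\,HT \to j_{!*}HT \to 0$, le texte l'\'ecrit directement sous la forme close (\ref{eq-egalite}), ce qui revient au m\^eme une fois la r\'ecurrence d\'eroul\'ee; (iii) la puret\'e de $j_{!*}HT$ et la propret\'e de $X_{\IC,\bar s}$ s\'eparent les $[H^i]$ par poids de Frobenius, et l'identification combinatoire de $\Pi_t \protect\overrightarrow{\times}\speh_{s-t}(\pi_v)$ dans le terme $i=t-s$ termine. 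Vous notez aussi, \`a juste titre, la variante par Lefschetz difficile qui est celle de \cite{boyer-compositio}, le texte ne faisant ici que citer la proposition 3.5.1 de cette r\'ef\'erence; c'est donc bien la m\^eme d\'emarche.
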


\rem rappelons que $[H^*(j^{\geq tg}_! HT(\pi_v, \st_{t}(\pi_{v})))] \{ \Pi^{\oo,v}\}$
est calculÈe dans \cite{h-t} de sorte que le calcul des $[H^i(j^{\geq tg}_{!*} HT(\pi_v,\Pi_t))]$
se dÈduit, en utilisant la puretÈ, de l'ÈgalitÈ de \cite{boyer-invent2}
\addtocounter{smfthm}{1}
\begin{equation}  \phantomsection \label{eq-egalite}
j^{= tg}_{!*} HT(\pi_v,\st_{t}(\pi_{v}))= \sum_{r=0}^{s-t} (-1)^r 
j^{= (t+r)g}_{!} HT \Bigl ( \pi_v,\st_{t}(\pi_{v}) \overrightarrow{\times} \speh_r(\pi_v)
\Bigr ) \otimes \Xi^{-r/2}.
\end{equation}

\begin{prop}  \phantomsection \label{prop-rappel-coho}
Dans la suite spectrale des cycles Èvanescents
$E_2^{i,j}=H^i(\hi^j \Psi_\IC) \Rightarrow H^{i+j}_\eta$
si $\st_s (\pi_v) \otimes L_g(\pi_v) (-\frac{k}{2})$ est un 
sous-quotient de $[E_{2}^{i,j}]\{ \Pi^{\oo,v}\}$ avec $i +j>0$ alors
$i+j=1$, $j=-g$ et $k=s-1$.
\end{prop}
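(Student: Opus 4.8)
The plan is to confront the two sides of the spectral sequence, both of which are in principle accessible from the trace-formula computations recalled in this section. On the target side, I first extract from \cite{h-t} (and \cite{boyer-compositio}, i.e. Lemme \ref{lem-rappel-coho} together with the identity (\ref{eq-egalite})) the $GL_d(F_v)\times W_v$-equivariant shape of $[H^m_\eta]\{\Pi^{\oo,v}\}$ for the fixed $\Pi$ with $\Pi_v\simeq\speh_s(\pi_v)$. The only input I really need is that $\st_s(\pi_v)=\pi_v[s]_D$ --- the most elliptic constituent of the Bernstein block of $\pi_v$ --- occurs as a $GL_d(F_v)$-subquotient of $[H^m_\eta]\{\Pi^{\oo,v}\}$ with $m\geq 1$ only for $m=1$, and that then the attached $W_v$-factor is a multiple of $L_g(\pi_v)(-\tfrac{s-1}{2})$.

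On the $E_2$-side I restrict attention to $\Psi_{\IC,\pi_v}$, since the monodromy decomposition $\Psi_\IC=\bigoplus_{\pi'_v}\Psi_{\IC,\pi'_v}$ isolates the relevant part. Starting from the decomposition $[\Psi_{\IC,\pi_v}]=\sum_{t=1}^{s}\sum_{i=t}^{s}\PC(i,\pi_v)(\tfrac{1+i-2t}{2})$ of \cite{boyer-invent2}, together with the computation of $[H^*(j^{\geq tg}_!HT(\pi_v,\st_t(\pi_v)))]\{\Pi^{\oo,v}\}$ of \cite{h-t} and the purity of the graded pieces of the monodromy filtration of $\Psi_{\IC,\pi_v}$, I would determine where the $\st_s(\pi_v)$-isotypic part of the ordinary cohomology sheaves $\hi^j\Psi_{\IC}$ can sit, and hence --- via the stratification spectral sequence of the constructible sheaf $\hi^j\Psi_{\IC}$ --- where $\st_s(\pi_v)\otimes L_g(\pi_v)(-\tfrac{k}{2})$ can occur in $E_2^{i,j}=H^i(X_{\IC,\bar s},\hi^j\Psi_{\IC})$. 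The expected outcome is that this isotypic part is nonzero only in the single row $j=-g$, and then only for $k=s-1$; so already $j=-g$ and $k=s-1$ whenever $\st_s(\pi_v)\otimes L_g(\pi_v)(-\tfrac{k}{2})$ is a subquotient of $[E_2^{i,j}]\{\Pi^{\oo,v}\}$.

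The rest is then formal. Because the $\st_s(\pi_v)\otimes L_g(\pi_v)(-\tfrac{s-1}{2})$-isotypic part sits only in the row $j=-g$, every differential $d_r$ ($r\geq 2$) starting from or landing in that part has its other end in a column $j'\neq -g$, where the part vanishes; hence all such differentials are zero, $E_2=E_\infty$ on that isotypic part, and $[E_2^{i,-g}]\{\Pi^{\oo,v}\}$ computes there the same part of $[H^{i-g}_\eta]\{\Pi^{\oo,v}\}$. Comparing with the first step, for $i+j=i-g>0$ we are forced into $i-g=1$, i.e. $i+j=1$, which together with $j=-g$ and $k=s-1$ is exactly the assertion.

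The main obstacle is the middle step: controlling, for each $j$, which Newton strata of $X_{\IC,\bar s}$ can produce a copy of $\st_s(\pi_v)$ in the global cohomology of $\hi^j\Psi_{\IC}$. A priori $\st_s(\pi_v)$ can be built by parabolic induction from Harris--Taylor systems on several strata, not only the supersingular one, since it is a constituent of products $\st_t(\pi_v)\overrightarrow{\times}\speh_{s-t}(\pi_v)$ (and $\st_t(\pi_v)\overrightarrow{\times}\st_{s-t}(\pi_v)$) for $t<s$ as well. Showing that the Galois twist is then forced to be $L_g(\pi_v)(-\tfrac{s-1}{2})$ and concentrated in the single degree $j=-g$ requires combining the explicit $\{\Pi^{\oo,v}\}$-parts of $[H^*(j^{\geq tg}_!HT(\pi_v,\st_t(\pi_v)))]$ from \cite{h-t}, the purity of the cohomology sheaves of the pure graded pieces of the monodromy filtration of $\Psi_{\IC,\pi_v}$, and the combinatorics of Zelevinsky segments underlying those $\overrightarrow{\times}$-products.
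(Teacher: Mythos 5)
Your proposal has the right ingredients in view (Newton stratification spectral sequence, the $\Pi^{\oo,v}$-isotypic parts of $[H^*(j^{\geq tg}_!HT(\pi_v,\st_t(\pi_v)))]$, Lemme~\ref{lem-rappel-coho}, the identity~(\ref{eq-egalite})), but it leaves precisely the crucial step as an unresolved ``main obstacle''. The paper does not try to reconstruct the location of the $\st_s(\pi_v)$-isotypic part of $\hi^j\Psi_{\IC}$ from the Grothendieck group expansion of $[\Psi_{\IC,\pi_v}]$ plus purity of the monodromy graded pieces: it instead plugs in, as a direct input from \'etapes 1--2 (Berkovich--Fargues comparison and the Lubin--Tate computation), the explicit restriction isomorphism $\bigl(\hi^{(t-s)g-r}\Psi_{\IC,\pi_v}\bigr)_{|X^{=tg}_{\IC,\bar s}}\simeq HT(\pi_v,LT_{\pi_v}(t,r))[(t-s)g]$. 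This pins down at once which $(j,t,r)$ are allowed, forces $t<s$ when $i+j>0$, and reduces the question to $[H^i(j^{\geq tg}_!HT(\pi_v,\st_t(\pi_v)))]\{\Pi^{\oo,v}\}$ for $r=0$, $i>0$; inverting~(\ref{eq-egalite}) and applying Lemme~\ref{lem-rappel-coho} (which concentrates the isotypic cohomology of each $j^{\geq(t+k)g}_{!*}HT$ in the single degree $t+k-s$) finishes the proof without any detour through $H^{\bullet}_{\bar\eta}$.

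Two further issues with your sketch. First, the claim you advertise as the ``expected outcome''~---~that the $\st_s(\pi_v)\otimes L_g(\pi_v)$-isotypic part of $E_2^{i,j}$ lives \emph{only} in the row $j=-g$~---~is strictly stronger than the proposition, which only asserts this under $i+j>0$; nothing in the paper, nor in what you cite, rules out contributions with $i+j\leq 0$ on other rows, so this step would need genuine justification rather than being taken for granted. Second, your final abutment-plus-degeneration argument is unnecessary and imports an additional unverified input (the one-degree concentration of $[H^m_{\bar\eta}]\{\Pi^{\oo,v}\}$ for $m\geq 1$); the paper obtains $i+j=1$ directly from the lemma, staying entirely on the $E_2$-side. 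So the plan is a genuinely different route and, as written, has a real gap exactly where it matters; to close it you would either have to import the same local description of $\hi^j\Psi_{\IC,\pi_v}$ that the paper uses, or supply the missing combinatorial argument determining the position of the $\st_s(\pi_v)$-isotypic part from $[\Psi_{\IC,\pi_v}]$ and purity.
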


\begin{proof}
Via la suite spectrale associÈe ‡ la stratification de Newton, 
on se ramËne ‡ regarder les groupes de cohomologie 
des $(\hi^j \Psi_{\IC,\pi_v})_{|X^{=tg}_{\IC,\bar s}}$
ce qui impose de considÈrer les $j$ de la forme $(t-s)g-r$ avec $1 \leq t \leq s$ et
$0 \leq r \leq t-1$. Comme on regarde $i+j >0$ le cas $t=s$ est exclu.
L'isomorphisme $\bigl ( \hi^{(t-s)g-r} \Psi_{\IC,\pi_v} \bigr )_{|X^{=tg}_{\IC,\bar s}} 
\simeq HT(\pi_v, LT_{\pi_v}(t,r))[(t-s)g]$ nous amËne ‡ regarder, pour $r=0$ et $i>0$, les  
$[H^{i}(j^{\geq tg}_{!} HT(\pi_v, \st_{t}(\pi_{v})))]\{ \Pi^{\oo,v} \}$.
D'aprËs \cite{boyer-invent2} corollaire 5.4.1, l'ÈgalitÈ (\ref{eq-egalite}) s'inverse en
$$j^{= tg}_{!}  HT(\pi_v,\st_{t}(\pi_{v}))=\sum_{k=0}^{s-t}
j^{= (t+k)g}_{!*}  HT(\pi_v,\st_{t}(\pi_{v}) \overrightarrow{\times} \st_{k}(\pi_v)) \otimes
\Xi^{-k/2}$$ 
ce qui nous ramËne ‡ regarder chacun des
$[H^{i}(j^{\geq (t+k)g}_{!*} HT(\pi_v, \st_{t}(\pi_{v}) \overrightarrow{\times} \st_{k}(\pi_v)))]\{ \Pi^{\oo,v} \}$ pour $i>0$.
Le rÈsultat dÈcoule alors du lemme prÈcÈdent.
\end{proof}

\subsection{Retour sur la filtration de stratification des \texorpdfstring{$j^{\geq tg}_! HT(\pi_v,\Pi_t)$}{Lg}}
\label{para-recurrent}

Dans ce paragraphe, on fixe une reprÈsentation admissible irrÈductible cuspidale $\pi_v$
de $GL_g(F_v)$ o˘ $g$ est un diviseur de $d=sg$ et pour $1 \leq t \leq s$,
on se propose tout d'abord de prouver la proposition \ref{prop-fp-K1} 
sans utiliser le thÈorËme \ref{theo-boyer}. 

\rem dans le cas o˘ $g$ ne divise pas $d$, le calcul des faisceaux de cohomologie
des $j^{\geq tg}_{!*} HT(\pi_v,\Pi_t)$ de \cite{boyer-invent2} dÈcoule assez simplement
de l'hypothËse de rÈcurrence sur le modËle local, puisque les points supersinguliers
n'entrent pas en jeu.

Pour ce faire on raisonne par rÈcurrence sur
$t$ de $s$ ‡ $1$. Les cas $t=s$ et $s-1$ Ètant Èvidents, supposons donc le rÈsultat acquis
jusqu'au rang $t+1$ et traitons le cas de $t$. On pose 
$P:=i_{tg+1,*} \lexp p \hi^{-1}_{libre} i_{tg+1}^* j^{\geq tg}_* HT(\pi_v,\Pi_t)$ avec
$$0 \rightarrow P \longrightarrow j_!^{= tg} HT(\pi_v,\Pi_t) \longrightarrow
j_{!*}^{= tg} HT(\pi_v,\Pi_t) \rightarrow 0,$$
et il s'agit de montrer que le morphisme d'adjonction
$j^{\geq (t+1)g}_! j^{\geq (t+1)g,*}P \longrightarrow P$ est surjectif.
Notons alors $P_0$
l'image de ce morphisme.  De la connaissance des constituants irrÈductibles de $P$ et,
d'aprËs l'hypothËse de rÈcurrence, des quotients de $j^{\geq (t+1)g}_! j^{\geq (t+1)g,*}P$,
l'image de $P_0$ dans le groupe de Grothendieck est, en utilisant que les strates non 
supersinguliËres sont gÈomÈtriquement induites, de la forme
$[P_0]=\sum_{i=1}^{s-t-k} j^{= (t+i)g}_{!*} 
HT(\pi_v,\Pi_t \overrightarrow{\times} \st_i(\pi_v)) \otimes \Xi^{-i/2}$
pour un entier $0 \leq k \leq s-t-1$. Par ailleurs la filtration par les poids de $P$
fournit un quotient $P \twoheadrightarrow P'_0$ o˘ $[P'_0]=[P_0]$ dans le groupe
de Grothendieck de sorte que le composÈ $P_0 \hookrightarrow P \twoheadrightarrow P'_0$
est un isomorphisme. On obtient ainsi que $P_0$ est un facteur direct de 
$P \simeq P_0 \oplus Q_0$ et il nous faut alors prouver que $Q_0$ est nul.
Pour ce faire nous allons raisonner sur la cohomologie des systËmes locaux
d'Harris-Taylor, l'idÈe Ètant de montrer qu'il existe un indice
$i \leq -k$ tel que $H^i(j^{\geq tg}_! HT(\pi_v,\Pi_t))$ est non nul de sorte que comme
$j^{\geq tg}$ est affine et donc que tous les $H^i(j^{\geq tg}_! HT(\pi_v,\Pi_t))$ sont nuls
pour $i < 0$, on doit nÈcessairement avoir $k=0$ et donc $Q_0$ est nul. Soit $A$ le poussÈ en
avant
$$\xymatrix{
0 \ar[r] & P_0 \oplus Q_0 \ar[r] \ar@{->>}[d] & j^{\geq tg}_! HT(\pi_v,\Pi_t) \ar[r] \ar@{-->>}[d] &
j^{\geq tg}_{!*} HT(\pi_v,\Pi_t) \ar[r] \ar@{=}[d] & 0 \\
0 \ar[r] & P_0 \ar@{-->}[r] & A \ar[r] &  j^{\geq tg}_{!*} HT(\pi_v,\Pi_t) \ar[r]
& 0
}$$
de sorte que l'on a la suite exacte courte
$0 \rightarrow Q_0 \longrightarrow  j^{\geq tg}_! HT(\pi_v,\Pi_t)  \longrightarrow A \rightarrow 0$
o˘ la flËche $A \rightarrow Q_0[1]$ se factorise par $j^{\geq tg}_{!*} HT(\pi_v,\Pi_t)
\rightarrow Q_0[1]$. Pour $k$ non nul, par puretÈ on en dÈduit alors que les flËches
$H^{i}(A) \rightarrow H^{1+i}(Q_0)$ sont nulles et donc
$H^i ( j^{\geq tg}_! HT(\pi_v,\Pi_t)) \simeq H^i(A) \oplus H^i(Q_0).$
Il suffit alors de prouver que $H^{-k}(A)$ est non nul. 
Soit alors $\Pi$ la reprÈsentation automorphe du paragraphe prÈcÈdent avec donc
$\Pi_v \simeq [\overrightarrow{s-1}]_{\pi_v}$. 
D'aprËs le lemme \ref{lem-rappel-coho}, 
$\Pi_t  \overrightarrow{\times} LT_{\pi_v}(s-t,k-1) \otimes \Xi^{\frac{s-t}{2}}$ est un 
constituant de $[H^{-k}(j^{\geq (s-k)g}_{!*} HT(\pi_v,\Pi_t  \overrightarrow{\times} 
\st_{s-t-k}(\pi_v))\otimes \Xi^{-\frac{s-t-k}{2}}] \{ \Pi^{\oo,v} \}$ et d'aucun autre des constituants 
irrÈductibles de $A$ de sorte que, via la suite spectrale associÈe ‡ la filtration par les
poids de $A$, $\Pi_t  \overrightarrow{\times} LT_{\pi_v}(s-t,k-1) \otimes \Xi^{\frac{s-t}{2}}$
est un constituant de $H^{-k}(A)[\Pi^{\oo,v}]$ qui est donc non nul, d'o˘ le rÈsultat.

\rem la preuve du thÈorËme \ref{theo-boyer} dans \cite{boyer-invent2} procËde 
par rÈcurrence sur $t$ de $s$ ‡ $1$ comme suit.
La suite spectrale $E_1^{p,q}(P)=\hi^{p+g} \gr^{-p}(P) \Rightarrow \hi^{p+q} P$ 
associÈe ‡ la filtration par les poids de $P$ dÈgÈnËre en $E_1$ et le gros du travail consiste
‡ montrer que pour tout $0 \leq p \leq s-t-3$, la flËche $d_1^{p,t+1-s}$ induit un morphisme
non nul
$$\Bigl (\Pi_t \overrightarrow{\times} \pi_v \Bigr ) \overrightarrow{\times} \st_p(\pi_v)
\overrightarrow{\times} \speh_{s-t-p-1}(\pi_v) \longrightarrow 
\Bigl ( \Pi_t \overrightarrow{\times} \pi_v \Bigr ) \overrightarrow{\times} 
\st_{p+1}(\pi_v)  \overrightarrow{\times} \speh_{s-t-p-2}(\pi_v).$$
Avec les notations prÈcÈdentes, la surjection
$j^{\geq (t+1)g}_! HT(\pi_v, \Pi_t \overrightarrow{\times} \pi_v) \otimes
\Xi^{-1/2} \twoheadrightarrow P$
fournit par fonctorialitÈ un morphisme entre leurs filtrations exhaustives de stratification
et donc des diagrammes commutatifs
$$\xymatrix{
E_1^{p,q}(!) \ar@{->>}[d] \ar[rr]^{d_1^{p,q}(!)}  & & E_1^{p+1,q}(!) \ar@{->>}[d] \\
E_1^{p,q}(P) \ar[rr]_{d_1^{p,q}(P)} & & E_1^{p+1,q}(P),
}$$
o˘ les $E_1^{p,q}(!)$ sont les termes initiaux de la suite spectrale calculant les
faisceaux de cohomologie de 
$j^{\geq (t+1)g}_! HT(\pi_v, \Pi_t \overrightarrow{\times} \pi_v) \otimes \Xi^{-1/2}$.
La non nullitÈ des $d_1^{\delta,t+1-s}(P)$ pour $(p,q)=(\delta,t+1-s)$ avec 
$0 \leq \delta <s-t-1$, dÈcoule de celle Èvidente des $d_1^{\delta,t+1-s}(!)$.

\begin{figure}[ht]
\centering
\input{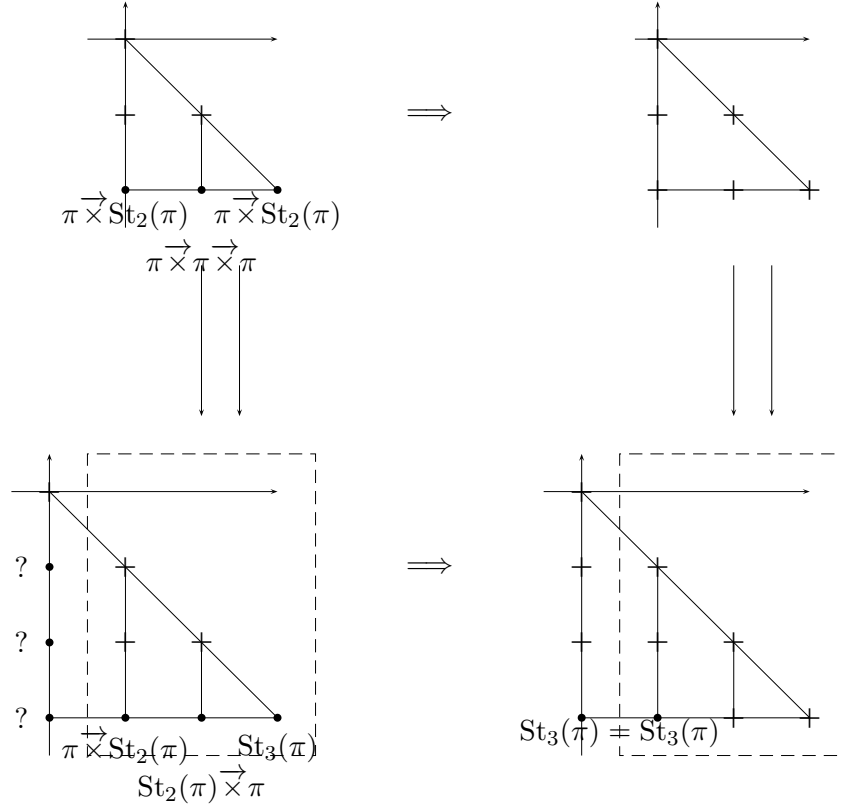}
\caption{\label{fig-ss} Germes en un point supersingulier spectrales 
$E_1^{p,q}(!) \longrightarrow E_1^{p,q}(P)$; on explicite simplement l'action du groupe linÈaire en \og factorisant
\fg{} par $\Pi_t \protect \overrightarrow{\times}$, i.e. quand on Ècrit
$\pi \protect \overrightarrow{\times} \protect \st_2(\pi)$ il faut lire 
$\Pi_t  \protect \overrightarrow{\times}
\protect \pi_v \protect \overrightarrow{\times}
\protect \st_2(\pi_v)$.}
\end{figure}

\subsection{Retour sur les filtrations de stratification de \texorpdfstring{$\Psi_{\IC,\pi_v}$}{Lg}}
\label{para-psi}

Reprenons la fin de preuve de la proposition \ref{prop-psi-fil} o˘, en raisonnant par l'absurde,
nous avons considÈrÈ $k$ tel que $\Fil^{kg}_!(\Psi_{\IC,\pi_v})$ 
soit strictement infÈrieure ‡ 
$\sum_{t=1}^k \sum_{i=t}^s \PC(i,\pi_v)(\frac{1+i-2t}{2})$ et 
$i_0>k$ minimal tel que 
$\PC(i_0,\pi_v)(\frac{1+i-2t}{2})$  ne soit pas un  constituant de $\Fil^{kg}_!(\Psi_{\IC,\pi_v})$. 
En raisonnant par rÈcurrence sur la cohomologie des espaces de Lubin-Tate, on se ramËne
au cas o˘ $i_0=s$.

Soit alors $\Pi$ la reprÈsentation automorphe du \S \ref{para-rappel-coho} avec donc
$\Pi_v \simeq \speh_s(\pi_v)$.
Notons tout d'abord que la fibre en un point supersingulier de $\hi^0 \Psi_{\IC,\pi_v}$ admet
$\pi_v[s]_D \otimes \st_s(\pi_v) \otimes L_g(\pi_v) (\frac{s+1-2t}{2})$ 
comme sous-quotient de sorte que $[H^0(\hi^0 \Psi_\IC)] \{ \Pi^{\oo,v} \}$ admet 
$\st_s(\pi_v) \otimes L_g(\pi_v) (\frac{s+1-2t}{2})$ comme sous-quotient. 
D'aprËs la proposition \ref{prop-rappel-coho}, pour tout $i>0$, 
$[H^i(\hi^{1-i} \Psi_\IC)] \{ \Pi^{\oo,v} \}$ n'admet pas $\st_s(\pi_v) \otimes
L_g(\pi_v) (\frac{s+1-2t}{2})$ comme sous-quotient de sorte que $[H^0_\eta] \{ \Pi^{\oo,v} \}$
admet $\st_s(\pi_v) \otimes L_g(\pi_v) (\frac{s+1-2t}{2})$ comme sous-quotient,
ce qui contredit le fait que $[H^0_{\bar \eta} ] \{ \Pi^\oo \}$
est non nul si et seulement si $\Pi$ est une reprÈsentation automorphe.

Enfin le calcul des fibres des faisceaux de cohomologie de $\Psi_{\IC,\pi_v}$ de
\cite{boyer-invent2} procËde comme suit. On considËre la suite spectrale
$E_1^{p,q}=\hi^{p+q} gr^{-p}_{\pi_v} \Rightarrow \hi^{p+q} \Psi_{\IC,\pi_v}$,
laquelle d'aprËs le calcul du paragraphe prÈcÈdent, des faisceaux de cohomologie
des faisceaux pervers d'Harris-Taylor, et donc des $E_1^{p,q}$, dÈgÈnËre en $E_1$.
Le gros du travail de \cite{boyer-invent2} consiste ‡ montrer qu'en un point supersingulier
$z$, pour tout $0 \leq \delta < s-1$ et $-\delta \leq p \leq s-1-2 \delta$, 
la flËche $d_1^{p,1-s+2\delta}$ induit un morphisme non nul
$$\st_{p+2\delta+1}(\pi_v) \overrightarrow{\times}  \speh_{s-p-2\delta}(\pi_v)
\longrightarrow \st_{p+2\delta+2}(\pi_v) \overrightarrow{\times} \speh_{s-p-2\delta-1}(\pi_v)
.$$
DÈsormais la non nullitÈ de cette flËche dÈcoule, comme dans le paragraphe prÈcÈdent, 
de la surjectivitÈ du morphisme d'adjonction $j^{= tg}_! j^{= tg} \gr^{tg}_! (\Psi_{\IC,\pi_v})
\longrightarrow \gr^{tg}_! (\Psi_{\IC,\pi_v})$.

\bibliographystyle{plain}
\bibliography{bib-ok}

\end{document}